\documentclass[12pt]{amsart}
\usepackage{amsmath}
\usepackage{amsfonts,comment}
\usepackage{amssymb,color}
\usepackage[hidelinks]{hyperref}
\usepackage{amsthm,doi}
\usepackage{thmtools}
\usepackage{physics}
\usepackage{tikz}
\usetikzlibrary{decorations.pathreplacing}
\usepackage{algorithm}
\usepackage{algorithmic}
\usepackage{soul,enumitem}
\usepackage[showonlyrefs]{mathtools}

\usepackage{wasysym}

\usepackage{subcaption}
\mathtoolsset{showonlyrefs=true}

\makeatletter
\newcommand\RedeclareMathOperator{%
	\@ifstar{\def\rmo@s{m}\rmo@redeclare}{\def\rmo@s{o}\rmo@redeclare}%
}

\newcommand\rmo@redeclare[2]{%
	\begingroup \escapechar\m@ne\xdef\@gtempa{{\string#1}}\endgroup
	\expandafter\@ifundefined\@gtempa
	{\@latex@error{\noexpand#1undefined}\@ehc}%
	\relax
	\expandafter\rmo@declmathop\rmo@s{#1}{#2}}

\newcommand\rmo@declmathop[3]{%
	\DeclareRobustCommand{#2}{\qopname\newmcodes@#1{#3}}%
}
\@onlypreamble\RedeclareMathOperator
\makeatother

\RedeclareMathOperator{\var}{Var}
\DeclareMathOperator{\cov}{Cov}
\DeclareMathOperator{\sgn}{sgn}

\newcommand{\C}{\mathbb{C}}

\newcommand{\charge}{\kappa}

\newcommand{\E}{\mathbb{E}}

\newcommand{\cons}{\eta}

\newcommand{\cgrid}{\Delta}
\newcommand{\cdelta}{\delta^{*}}
\newcommand{\ccdelta}{\delta^{**}}
\newcommand{\grid}{\Lambda}

\newcommand{\gridlplus}{\Lambda_{L+\ccdelta}}
\newcommand{\gridsparse}{X_{L}}
\newcommand{\gridsparseno}{X}
\newcommand{\gridsparseplus}{X_{L+1}}
\newcommand{\domain}{\Omega_L}
\newcommand{\domainplus}{\Omega_{L+\ccdelta}}

\newcommand{\domainminus}{\Omega_{L-\ccdelta}}

\newcommand{\outset}{\mathrm{Z}}
\newcommand{\outseta}{\mathrm{Z}_1}
\newcommand{\outsetb}{\mathrm{Z}_2}
\newcommand{\map}{\Phi}

\newcommand{\alstep}[1]{\noindent {\bf #1}}

\newcommand{\sdelta}{\lceil\delta^{-1/2}\rceil}
\newcommand{\sdeltaa}{\lceil \frac14 \delta^{-1/2}\rceil}
\newcommand{\Dop}{\mathcal{D}}

\newcommand{\algo}{PhaseJumps}
\newcommand{\talgo}{Twisted PhaseJumps}
\newcommand{\calgo}{Twisted PhaseJumps-coarse}

\newtheorem{lemma}{Lemma}[section]
\newtheorem{theorem}[lemma]{Theorem}

\newtheorem{prop}[lemma]{Proposition}

\theoremstyle{definition}

\numberwithin{equation}{section}
\newtheorem{rem}[lemma]{Remark}

\allowdisplaybreaks

\author[A. Haimi]{Antti Haimi}
\address[A. Haimi]{Faculty of Science and Engineering, Åbo Akademi University,
Tuomiokirkontori 3
20500 Turku, Finland}
\email{antti.haimi@abo.fi}
\author[G. Koliander]{G\"{u}nther Koliander}
\address[G. Koliander]{Acoustics Research Institute, Austrian Academy of Sciences,
Dr. Ignaz Seipel-Platz 2,	AT-1010 Vienna, Austria}
\email{gkoliander@kfs.oeaw.ac.at}
\author[J. L. Romero]{Jos\'{e} Luis Romero}
\address[J. L. Romero]{Faculty of Mathematics, University of Vienna, Oskar-Morgenstern-Platz 1, A-1090 Vienna, Austria, and Acoustics Research Institute, Austrian Academy of Sciences, Dr. Ignaz Seipel-Platz 2,	AT-1010 Vienna, Austria}
\email{jose.luis.romero@univie.ac.at}

\thanks{This research was funded in whole or in part by the Austrian Science Fund (FWF): 10.55776/Y1199. For open access purposes, the authors have applied a CC BY public copyright license to any author-accepted manuscript version arising from this submission.}

\keywords{Short-time Fourier transform, spectrogram, zeros set, charge, critical point, random function, Wasserstein metric, 
Gaussian Weyl-Heisenberg Function}

\subjclass[2020]{60G15, 60G55, 94A12, 42A61}

\title[PhaseJumps: fast computation of zeros from planar grid samples]{PhaseJumps: fast computation of zeros from planar grid samples}

\begin{document}
\begin{abstract}

We consider complex-valued functions on the complex plane and the task of computing their zeros from samples taken along a finite grid. We introduce PhaseJumps, an algorithm based on comparing changes in the complex phase and local oscillations among neighboring grid points. The algorithm is applicable to possibly non-analytic input functions, and also computes the direction of phase winding around zeros.

PhaseJumps provides a first effective means to compute the zeros of the short-time Fourier transform of an analog signal with respect to a general analysis window, 
and makes certain recent signal processing insights more widely applicable, overcoming previous constraints to analytic transformations. We study the performance of (a variant of) PhaseJumps under a stochastic input model motivated by signal processing applications and show that the input instances that may cause the algorithm to fail are fragile, in the sense that they are regularized by additive noise (smoothed analysis). Precisely, given samples of a function on a grid with spacing $\delta$, we show that our algorithm computes zeros with accuracy $\sqrt{\delta}$ in the Wasserstein metric with failure probability $O\big(\log^2(\tfrac{1}{\delta}) \delta\big)$, while numerical experiments suggest even better performance.
\end{abstract}

\maketitle

\section{Introduction}

\subsection{Computation of zero sets from grid values}\label{sec_1}

Let $F\colon \mathbb{C} \to \mathbb{C}$ be a continuous function and consider the problem of computing an approximation of its zero set within the box
\begin{align}\label{eq_Omega}
\Omega_L =\{x+iy\,:\, |x|,|y| \leq L\}.
\end{align}
We assume that we have access to the values of $F$ only on those points from the grid
\begin{align}\label{eq_Lambda}
\Lambda = \{ \delta k + i \delta j : k,j \in \mathbb{Z} \}
\end{align}
that lie inside (or very near) the \emph{computation domain} $\Omega_L$. We think of the inverse of the grid spacing $\delta$ as the \emph{resolution of the data}. 

When $F$ is analytic, the Minimal Grid Neighbors (MGN) algorithm provides a fast and highly effective way to carry out the desired computation. The method consists in collecting all grid points $\lambda \in \Lambda \cap \Omega_L$ for which the weighted magnitude $e^{-|\lambda|^2/2}|F(\lambda)|$ is minimal among all the immediate grid neighbors of $\lambda$:
\begin{align}\label{eq_intro_test}
e^{-|\lambda|^2/2}|F(\lambda)| \leq 
e^{-|\lambda'|^2/2}|F(\lambda')|, \qquad
\lambda' \in \Lambda, \quad |\lambda'-\lambda|_\infty=\delta.
\end{align}
(This requires access to $F$ not only on the grid points $\Lambda \cap \Omega_L$ but on the slightly larger set of all their immediate grid neighbors.)
The MGN algorithm was implicitly introduced in \cite{flandrin2015time} and has since become an important tool in signal processing, where the \emph{short-time Fourier transform} of a signal is given on a grid, and its zeros are used as landmarks that help distinguish signal components from noise (see Section \ref{sec_stft}). While no algorithm can work for all possible input functions (as the values of $F$ on a finite grid do not determine $F$ elsewhere), the remarkable effectiveness of (a slight variant of) MGN has been theoretically validated under a stochastic input model aimed at describing its performance in practice \cite{efkr24, smooth}.

Unfortunately, the MGN algorithm does not work well with non-analytic input functions $F$, because the success of the minimality test \eqref{eq_intro_test} in finding zeros depends crucially on the superharmonicity of the weighted function $e^{-|z|^2/2} |F(z)|$ (see \cite[Section 8.2.2]{gafbook} and \cite[Lemma 3.1]{efkr24}). This restriction limits the applicability of filtering methods based on short-time Fourier transform zeros, as many time-frequency representations used in practice are non-analytic.

In this article we introduce and analyze a new algorithm, called \algo, that provides fast computation of zero sets of possibly non-analytic functions from their values on a finite grid. As we discuss in Section \ref{sec_stft}, this provides a first means to compute the zeros of general time-frequency representations.

\subsection{The PhaseJumps algorithm}
Consider again the problem of computing the zero set $\{F=0\} \cap \Omega_L$ of a possibly non-analytic function $F$. The \algo\, algorithm identifies certain points on the finite grid
\begin{align*}
\grid_L := \grid \cap \Omega_L =
\left\{ \delta k+i \delta j: k,j \in \mathbb{Z}, |\delta k|, |\delta j| \leq L \right\}
\end{align*}
as numerical approximations of zeros, 
by detecting changes of the complex argument (phase) around those points. The key parameter of the algorithm is a choice of \emph{phase-stabilizing factors}
\begin{align}
\nu(z,w) \in \mathbb{C} \setminus \{0\}, \qquad z,w\in\mathbb{C},
\end{align}
which are only required to depend differentiably on $(z,w)$ (in the real sense). Once the factors are fixed, the corresponding \emph{phase-stabilized shifts} are
\begin{align}\label{eq_ps}
F_w(z)= \nu(z,w) F(z+w), \qquad z,w \in \mathbb{C}.
\end{align}
The default choice for the stabilizing factors is $\nu(z,w)=1$, in which case \eqref{eq_ps} are the standard Euclidean translations. Later, we will choose $\nu(z,w)=e^{-i \Im(z \bar w)}$ and show that these factors are effective in applications to time-frequency analysis; see Figure \ref{fig:phase_detail} (left). We think of $F \mapsto F_w$ as a centering operation; independently of the particular choice of the (non-vanishing) phase-stabilizing factors, the phase winding of $F_w(z)$ near $z=0$ coincides with that of $F(z)$ near $z=w$.

The \algo\, algorithm determines which boxes of half side-length $2\delta$ centered at $\lambda \in \grid_L$ contain zeros of $F$ by computing an approximation of the argument change along the boundary of the box. Specifically, we enumerate all points $\mu_j$ of the grid $\Lambda$ with $|\mu_j|_\infty=2\delta$ anti-clockwise and keep $\lambda$ if
\begin{align}\label{eq_intro_theta}
\sum_{j=1}^N \arg \big[\,F_\lambda(\mu_{j}) \overline{F_\lambda(\mu_{j-1})}\,\big] \not=0.
\end{align}
Here $\arg\colon \C\setminus\{0\} \to (-\pi,\pi]$ is the principal branch of the argument, which we further define for convenience as $\arg(0) := 0$. Of course, this approximate computation of the argument change may be incorrect if the argument of $F$ changes very significantly between grid neighbors. We take this possibility into account by eliminating $\lambda$ as a candidate zero if the maximal argument fluctuation of $F$ along neighboring points in the boundary of the test box is larger than $90\%$ of the total possible argument change. More precisely, we only keep $\lambda$ if, in addition to \eqref{eq_intro_theta}, the following holds:
\begin{align}\label{eq_intro_theta_2}
\max_{j=1, \dots, N} \big| \arg \big[\,F_\lambda(\mu_{j}) \overline{F_\lambda(\mu_{j-1})}\big] \big|< 0.9 \cdot \pi.
\end{align}
The test box is taken to have half side-length $2 \delta$ (instead of $\delta/2$) because we expect the test \eqref{eq_intro_theta} to be more reliable for detecting zeros well within the box. For zeros near the boundary of the test box, the phase fluctuations may be large and \eqref{eq_intro_theta_2} may fail. We expect such a zero to be however caught by another test box centered close to it; see Figure \ref{fig:phase_detail}.

\begin{figure}[tbp]
    \centering
    \begin{subfigure}[b]{0.47\textwidth}
    \includegraphics[width=0.98\textwidth]{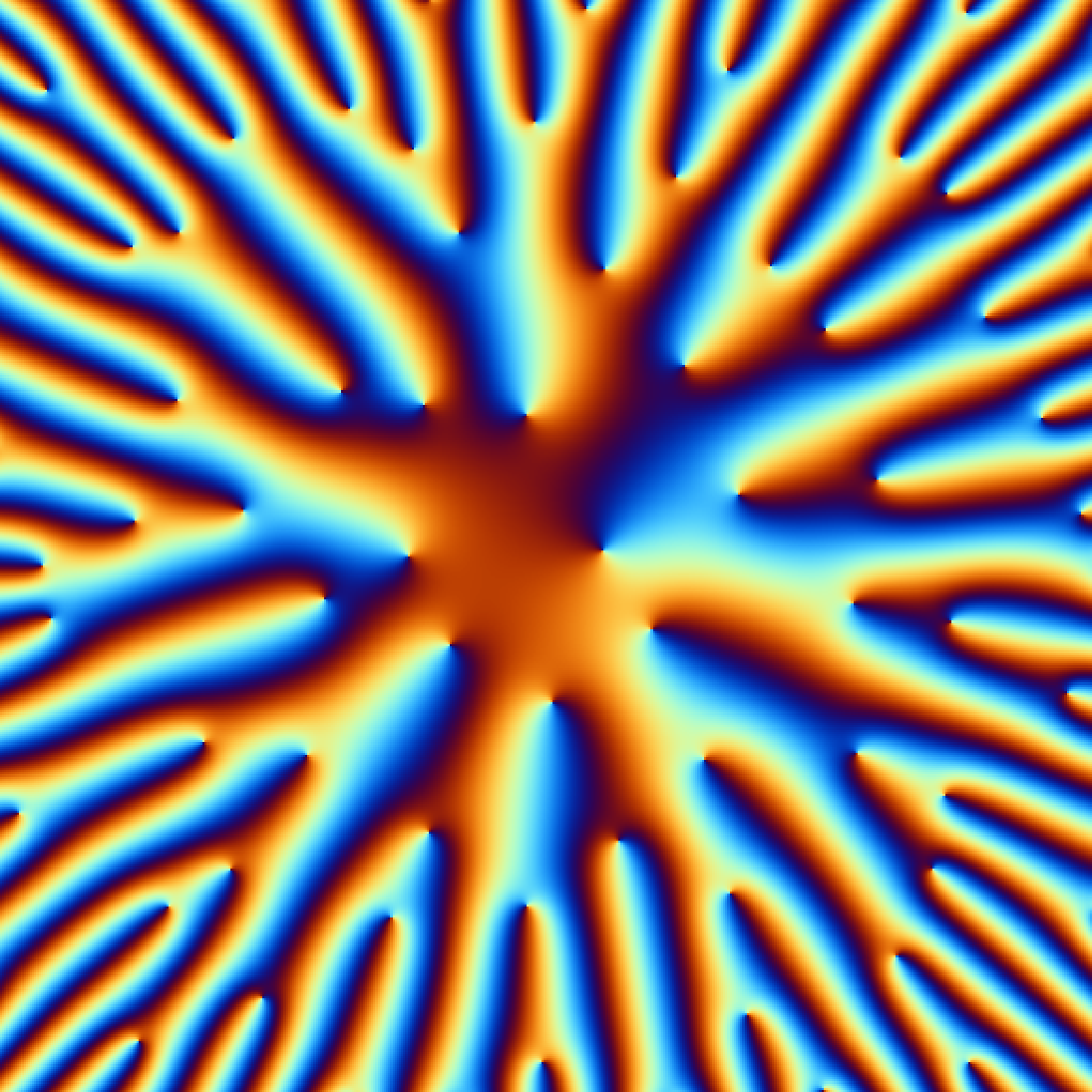}
    \end{subfigure}
    \hfill
    \begin{subfigure}[b]{0.47\textwidth}
    \includegraphics[width=0.98\textwidth]{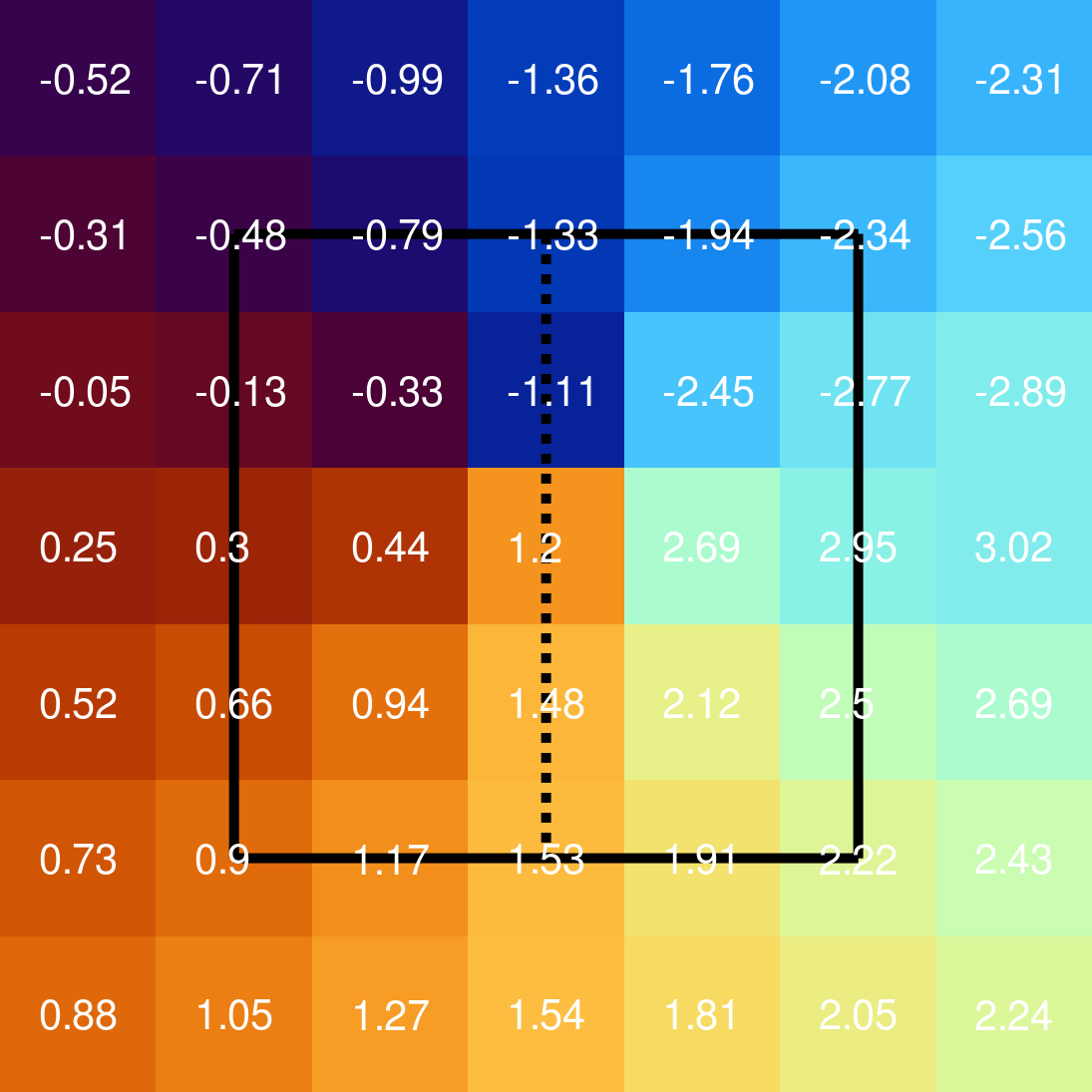}
    \end{subfigure}
    \caption{Phase values of a complex-valued function.
    \textit{Left}: In applications to time-frequency analysis (see Section \ref{sec_stft}), phases fluctuate moderately near the origin but strongly away from it. The twisted shifts \eqref{eq_ts} re-center the function while moderating phase fluctuations, as in the center of the image.
    \textit{Right}:
    A zoom onto the pixel values close to a potential zero near the center. By calculating the argument change along the black square, we can deduce that a zero is likely within the box. 
    This is justified because phase fluctuates moderately along the black box. On the other hand, along the dotted vertical segment, there is a large argument jump.}
    \label{fig:phase_detail}
\end{figure}

The disadvantage of using slightly larger test boxes is redundancy: test boxes centered at neighboring points overlap and may cause multiple detections of the same zero. We compensate for this by \emph{sieving} the set of selected points to enforce a minimal separation of $6 \delta$ between different points. The resulting set is the output of \algo, and our approximation of the zero set $\{F=0\}\cap\Omega_L$. 

Along the way, we have also computed the argument change \eqref{eq_intro_theta}, which, as we shall see, informs us about the \emph{charge of $F$} at the detected zero $z$:
\begin{align*}
\charge_z = \sgn \det DF(z) \in \{-1,0,1\},
\end{align*}
where $DF(z)$ is the differential matrix of $F\colon \mathbb{R}^2 \to \mathbb{R}^2$. The charge of $F$ at a zero $z$ describes how $F$ locally transforms orientation, and is a meaningful feature for non-analytic functions (while the conformality of analytic functions implies that their charges are always non-negative.) 

The algorithm is formally specified as follows.

\noindent\rule{\textwidth}{1pt}
\noindent {\bf Algorithm \algo: \,}\\{Compute the (charged) zero set of $F$ inside the target domain $\domain = [-L,L]^2$.}\vspace{-0.2cm}\\
\noindent\rule{\textwidth}{1pt}

\alstep{Input}: A domain length $L\geq 1$, a grid spacing parameter $\delta >0$, and samples of a function $F\colon \mathbb{C} \to \mathbb{C}$ on the grid points $\grid_{L+2\delta} = \grid \cap [-(L+2\delta),L+2\delta]^2$.

\alstep{Parameter}: A collection of phase-stabilizing factors $\{\nu(z,w)\,:\,z,w \in \mathbb{C}\} \subset \mathbb{C}\setminus\{0\}$.

\alstep{Step 1}. Set $\outseta := \Lambda \cap \domain$. For each $\lambda \in \outseta$, consider all $\mu \in \Lambda$ such that $|\mu|_\infty = 2\delta$, and order them as $\mu_1, \ldots, \mu_N$ so that $\arg(\mu_1) \leq \ldots \leq \arg(\mu_N)$.
Set $\mu_0 := \mu_N$. 

Define 
\begin{align}
	\theta_\lambda := \frac{1}{2\pi}\sum_{j=1}^N \arg \big[\,F_\lambda(\mu_{j}) \overline{F_\lambda(\mu_{j-1})}\,\big]
\end{align}
and
\begin{align}
	\chi_\lambda := \max_{j=1, \dots, N}\Big[ \tfrac{1}{\pi} \big\lvert \arg \big[\,F_\lambda(\mu_{j}) \overline{F_\lambda(\mu_{j-1})}\big] \big\rvert\Big].
\end{align}
Let $\outsetb := \{\lambda \in \outseta: \theta_\lambda \not=0 \textrm{ and } \chi_\lambda < 0.9\} $.

\alstep{Step 2}. Use an off-the-shelf clustering algorithm to select a subset $\outset \subset \outsetb$ such that elements with the same $\theta_{\lambda}$ are  $6 \delta$ separated:
\begin{align}\label{eq_6sep}
\inf \big\{ |\lambda-\lambda'|_\infty: \lambda, \lambda' \in \outset, \theta_{\lambda} = \theta_{\lambda'}, \lambda\not=\lambda'\big\} \geq 6 \delta
\end{align}
and is maximal with respect to that property, i.e., no proper superset satisfies \eqref{eq_6sep}.

\alstep{Output}: The set $\outset$ and the numbers $\{\theta_\lambda: \lambda \in \outset\}$.

\noindent(Note that the tests in Step 1 and 2 only involve grid points 
$\lambda+\mu_j \in \grid_{L+2\delta}$.)
\\
\noindent\rule{\textwidth}{1pt}
We note that \algo\, is very fast and can be used on large data sets. 

\subsection{Twisted PhaseJumps and Gabor analysis}\label{sec_stft}
In the remainder of the article, we consider $\nu(z,w)=e^{-i \Im(z \bar w)}$ as phase-stabilizing factors in \eqref{eq_ps}, which yields the \textit{twisted shifts}
\begin{align}\label{eq_ts}
F_w(z)= F(z+w) e^{-i \Im(z \bar w)}
, \qquad z,w \in \mathbb{C}.
\end{align}
We refer to the corresponding algorithm as \talgo.
This choice of phase stabilization is tailored to applications in non-stationary signal processing, as we now briefly explain.

The goal of \emph{Gabor analysis} is to analyze a so-called \emph{signal} $f\colon \mathbb{R} \to \mathbb{C}$ by means of certain discrete time-frequency measurements, known as \emph{Gabor coefficients}. These encode the correlations of $f$ with the time and frequency shifts of a reference Schwartz class \emph{window function} $g\colon \mathbb{R} \to \mathbb{C}$:
\begin{align}\label{eq_gc}
c_{k,j} = \int_{\mathbb{R}} f(t) \overline{g(t-\delta k)} e^{-2\pi i \delta j t}\, dt, \qquad k,j \in \mathbb{Z},
\end{align}
where the resolution parameter $\delta>0$ is essentially determined by the quality of the analog-to-digital conversion device.\footnote{This is of course a simplified digital acquisition model. In practice, signals are often quantized in time and then processed, which introduces other errors that we do not model with \eqref{eq_gc}.}

In essence, Gabor analysis is about \emph{sampling}, because the coefficients \eqref{eq_gc} are the values of the \emph{short-time Fourier transform of $f$}:
\begin{align}\label{eq_stft}
V_g f(x,\xi) = \int_{\mathbb{R}} f(t) \overline{g(t-x)} e^{-2\pi i \xi t}\, dt, \qquad (x,\xi) \in \mathbb{R}^2
\end{align}
on the grid \eqref{eq_Lambda}:
$c_{k,j} = V_g f(\delta k, \delta j)$. While the full collection of time-frequency correlations \eqref{eq_stft} completely determines the signal $f$, Gabor analysis aims to extract information about $f$ (or, equivalently, about $V_g f$) from the coefficients \eqref{eq_gc}, or, more realistically, from a finite subcollection thereof.

We are motivated by the problem of learning the \emph{zero set} of $V_g f$ from finite data of the form \eqref{eq_gc}. The zero set in question has been recently discovered to provide a rich set of landmarks \cite{gardner2006sparse} that help identify a non-stationary signal impacted by additive noise \cite{flandrin2015time, 7869100,MR4047541,MR4396311,MIRAMONT2024109250}. Specifically, empirical statistics for zeros -- computed with a particular signal $f$ -- are compared to ensemble averages under a certain noise model, for which precise statistics are known \cite{gafbook, MR1662451,MR4047541, bh, hkr22, MR4396311, efkr24, fhkr}. \emph{This paradigm necessitates, however, an efficient method to compute the desired zero set from finite data}, something that so far has only been available for the Gaussian window function $g(t)=e^{-\pi t^2}$. Indeed, in this special case, the change of variables
\begin{align}\label{eq_FF}
F(z) = e^{-ix\xi} \cdot V_g f(x/\sqrt{\pi},-\xi/\sqrt{\pi}), \qquad z=x+i\xi,
\end{align}
yields a weighted analytic function, and the Minimal Grid Neighbors algorithm \cite{flandrin2015time}  computes its zeros very efficiently (see Section \ref{sec_1}).\footnote{The description of the algorithm in Section \ref{sec_1} pertains to the analytic function $\tilde{F}(z)=e^{|z|^2/2} F(z)$ in lieu of $F(z)$ and thus involves the local minima of $|F(\lambda)|$.}
In contrast, for non-Gaussian window functions $g$, the short-time Fourier transform cannot be identified with a weighted analytic function \cite{MR2729877} and the MGN algorithm is not effective (see Section \ref{sec_1}). This is a shortcoming in many important applications where it is helpful to use and to average over several (non-Gaussian) window functions \cite[Section 10.2]{flandrin2018explorations}.

\subsection{Contribution}
Our contribution is two-fold. First, \algo\, provides, for the first time, an effective method to compute the zeros of the short-time Fourier transform with an arbitrary window. As an illustration, Figure
\ref{fig:sigplusnoise} shows the computation of the zeros and charges of the STFT of the signal $f(t)=e^{-t^2/2}$ embedded into white noise, with respect to the window $g(t)=t e^{-t^2/2}$; see also Section \ref{sec_num} 
for more comprehensive numerical experiments. Second, we provide the first theoretical guarantees for the computation of zeros of short-time Fourier transforms (with general windows) from grid-values. While in practice \talgo\, effectively approximates the desired zero set $\{F=0\} \cap \Omega_L$ 
of the function \eqref{eq_FF} (with general $g$) up to a resolution comparable to that of the acquisition grid \eqref{eq_Lambda}, we shall only prove that computations are accurate up to the coarser scale $\sqrt{\delta}$. To facilitate the presentation, we shall introduce a modified version of \talgo, which we call \calgo, that explicitly deals with two scales, corresponding to data and computation resolution levels.

Our work also has novel applications to spectrograms computed with Gaussian windows. While such functions can be identified with square magnitudes of analytic functions -- and therefore their zeros can be effectively computed with MGN \cite{flandrin2018explorations} -- other landmarks such as \emph{spectrogram local maxima and saddle points} are related to zeros and charges of the short-time Fourier transform with Hermite windows; see, e.g., \cite[Section 6.8]{hkr22}, \cite[Section 9.2]{fhkr}. Thus our work provides for the first time a reliable means to compute such features, which are important in relation to spectrogram reassignment \cite[Chapters 12 and 14]{flandrin2018explorations}; see Figure \ref{fig:herm_compare}.

In the same vein, by applying \algo\, to simulated inputs, we provide a first efficient means to simulate \emph{critical points of random entire functions} -- which are zeros of certain non-analytic functions, and provide heuristic models in string theory \cite{MR2104882}. Similarly, \algo\, enables the simulation of zeros of higher order random poly-entire functions \cite{hahe13, hkr22}, a task that has so far been very challenging.

\begin{figure*}[tbp]
    \centering
    \begin{subfigure}[b]{0.47\textwidth}
    \includegraphics[width=0.98\textwidth]{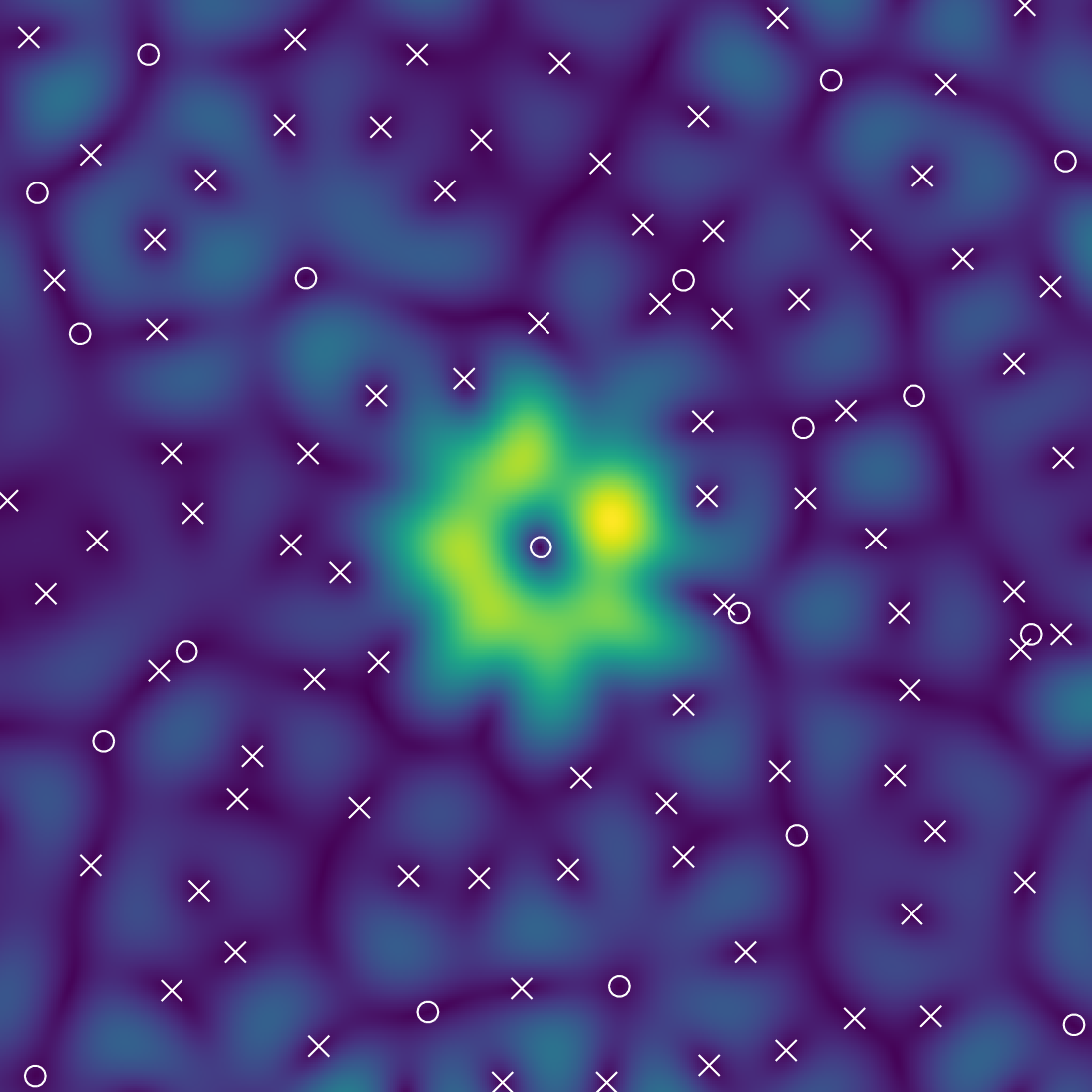}
    \end{subfigure}
    \hfill
    \begin{subfigure}[b]{0.47\textwidth}
    \centering
    \includegraphics[width=0.98\textwidth]{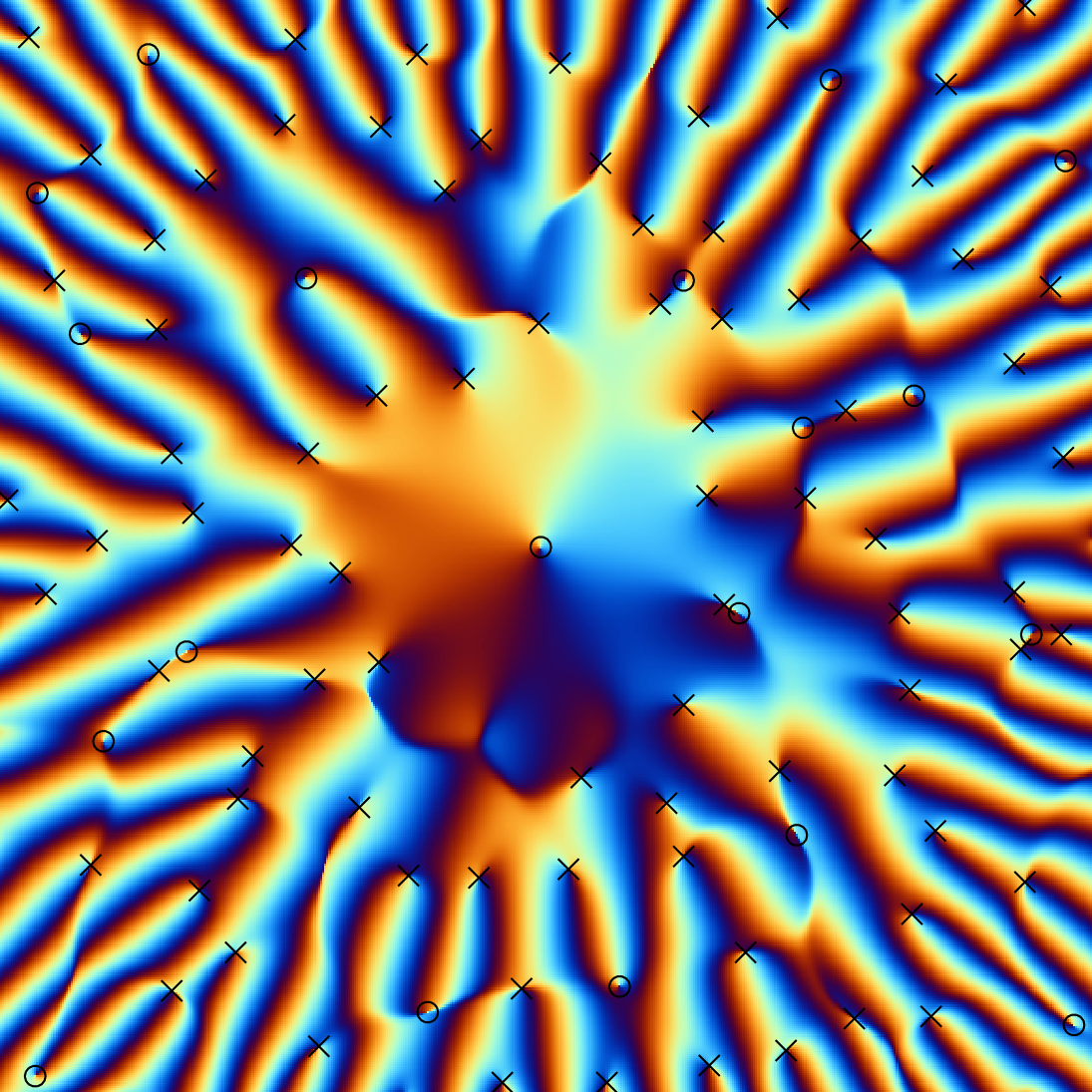}
    \end{subfigure}
    \caption{The STFT of a Gaussian signal in additive complex white noise using the Hermite function of order 1 as window 
    (\textit{Left}: Modulus,  
    \textit{Right}: Phase). Zeros with positive charge are marked by $\times$ and zeros with negative charge by $\circ$.}
    \label{fig:sigplusnoise}
\end{figure*}

\section{Smoothed analysis at lower computational resolution}
\subsection{The PhaseJumps-coarse algorithm}
Consider again the problem of computing the zero set $\{F=0\} \cap \Omega_L$ of a possibly non-analytic function $F$, and let us introduce the following \emph{coarse sub-grid} of $\Lambda$:
\begin{align}\label{eq_cgrid}
\cgrid = \left\{ \cdelta k+i \cdelta j: k,j \in \mathbb{Z} \right\},  \mbox{ with } \cdelta := 2 
\left\lceil {\delta^{-1/2}}/{4} \right\rceil \delta,
\end{align}
which has spacing $\cdelta \approx \sqrt{\delta}$. The \calgo\, algorithm chooses some points of the finite grid
\begin{align*}
\cgrid_L := \cgrid \cap \Omega_L =
\left\{ \cdelta k+i \cdelta j: k,j \in \mathbb{Z}, |\cdelta k|, |\cdelta j| \leq L \right\}
\end{align*}
as numerical approximations of zeros, by applying three consecutive selection tests, formulated in terms of the twisted shifts \eqref{eq_ts}.

\noindent {\bf Step 1}. We compare oscillations relative to the \emph{fine scale} $\delta$ and the \emph{coarse scale}
\begin{align}\label{eq_def_dss}
\ccdelta := \left\lceil{\delta^{-1/2}}\right\rceil  \delta.
\end{align}

A point $\lambda \in \cgrid_L$ is selected if
\begin{align*}
|F_\lambda(\mu)| \geq 2 \lvert F_\lambda(\mu) - F_\lambda(\mu') \rvert,
\end{align*}
whenever $\mu, \mu' \in \Lambda$ are such that
\begin{align}\label{eq_intro_new}
|\mu|_\infty = |\mu'|_\infty =\ccdelta,
\mbox{ and } |\mu'-\mu| = \delta.
\end{align}
Here, $F_\lambda$ is the twisted shift, cf. \eqref{eq_ts}, and 
\eqref{eq_intro_new} describes all pairs of consecutive fine grid points on the boundary of the box with half side-length $\ccdelta$. This test is a variant of the small phase-oscillation condition \eqref{eq_intro_theta_2} that is better suited to the two-scale setting. Intuitively, the test is satisfied whenever $F$ vanishes near $\lambda$, because then the points $\mu$ at the boundary of the box of half side-length 
$\ccdelta \approx \sqrt{\delta}$ satisfy $|F_\lambda(\mu)| \approx |\nabla F_\lambda(\mu)| \sqrt{\delta}$, while the oscillations at the fine scale are $\lvert F_\lambda(\mu) - F_\lambda(\mu') \rvert \approx |\nabla F_\lambda(\mu)| \delta$, and the margin between the coarse and fine scale is wide enough to allow us to neglect second order errors. See Fig.~\ref{fig:grid_step1}.
\begin{figure*}[tbp]
    \centering
    \begin{tikzpicture}
    \draw (0, 0) node[inner sep=0] {\includegraphics[width=0.5\textwidth]{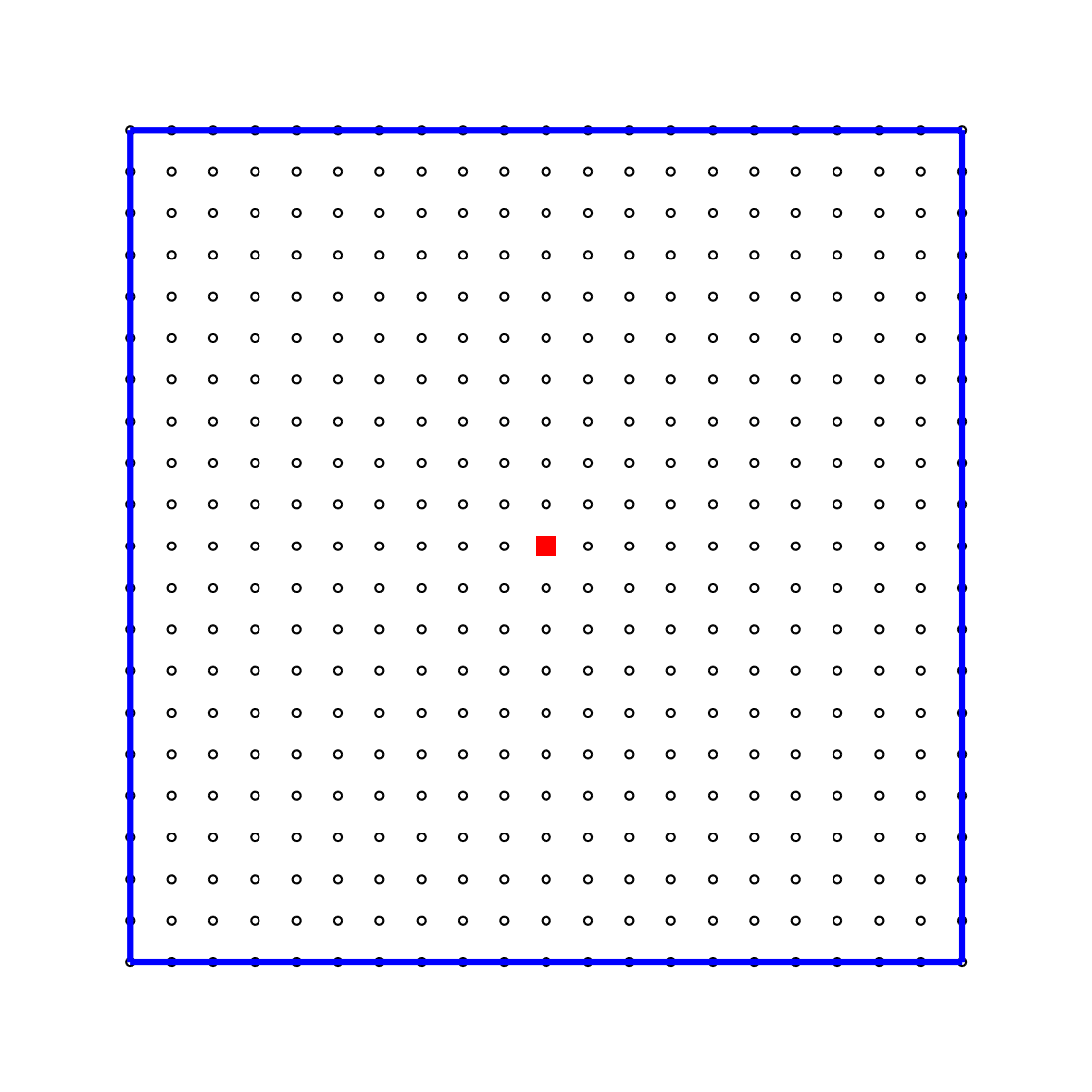}};
    \draw[thick,decorate,decoration={brace,amplitude=4pt}] (3.5, -3) -- (3.5, -3.37) node[midway, right,xshift=2pt,]{$\delta$};
    \draw[thick,decorate,decoration={brace,amplitude=4pt}] (4, 0) -- (4, -3.37) node[midway, right,xshift=2pt,]{$\delta^{**}$};
    \end{tikzpicture}
    \caption{Illustration of Step 1 of PhaseJumps-coarse (with $\delta=0.01$ and $\delta^{**}=0.1$). The center ($\lambda$, red square) of the coarse-scale test box (blue square) is selected if the oscillation of $F$ among all fine-scale neighboring points in the boundary of the box ($\mu, \mu'$, black circles) is small enough.}
    \label{fig:grid_step1}
\end{figure*}

\noindent {\bf Step 2}. In the same spirit as \algo, we then try to decide which boxes of half side-length $\ccdelta$ centered at the previously selected points $\lambda$ contain zeros of $F$ by computing an approximation of the argument change along the boundary of the box. We enumerate all points $\mu_j$ of the fine grid $\Lambda$ with $|\mu_j|_\infty=\ccdelta$ anti-clockwise and keep $\lambda$ only if
\begin{align}
\sum_{j=1}^N \arg \big[\,F_\lambda(\mu_{j}) \overline{F_\lambda(\mu_{j-1})}\,\big] \not=0.
\end{align}

\noindent {\bf Step 3}. Finally, we sieve the obtained set to enforce a minimal separation of $5 \ccdelta$ between different points. 

The algorithm is formally specified as follows.

\noindent\rule{\textwidth}{1pt}
\noindent {\bf Algorithm \calgo: \,}\\{Compute the (charged) zero set of $F$ inside the target domain $\domain = [-L,L]^2$.}\vspace{-0.2cm}\\
\noindent\rule{\textwidth}{1pt}

\alstep{Input}: A domain length $L\geq 1$, a grid spacing parameter $\delta >0$ and samples of a function $F\colon \mathbb{C} \to \mathbb{C}$ on the grid points $\gridlplus = \grid \cap [-(L+\ccdelta),L+\ccdelta]^2$, where $\ccdelta := \left\lceil{\delta^{-1/2}}\right\rceil  \delta$.

\alstep{Step 1}:  A point $\lambda \in \cgrid_L$ is \emph{selected} if
\begin{align}\label{st11}
|F_\lambda(\mu)| \geq 2 \lvert F_\lambda(\mu) - F_\lambda(\mu') \rvert,
\end{align}
whenever $\mu, \mu' \in \Lambda$ are such that
\begin{align}\label{st12}
|\mu|_\infty = |\mu'|_\infty =\ccdelta,
\mbox{ and } |\mu'-\mu| = \delta.
\end{align}
Let $\outseta$ be the set of all selected points.

\alstep{Step 2}. For each $\lambda \in \outseta$, consider all $\mu \in \Lambda$ such that $|\mu|_\infty = \ccdelta$, and order them as $\mu_1, \ldots, \mu_N$ so that $\arg(\mu_1) \leq \ldots \leq \arg(\mu_N)$.
Set $\mu_0 := \mu_N$. 

If $F_\lambda(\mu_{j})=0$ for some $j=1,\ldots,N$, set $\theta_\lambda :=0$. Otherwise, define 
\begin{align}\label{eq_tl}
	\theta_\lambda := \frac{1}{2\pi}\sum_{j=1}^N \arg \big[\,F_\lambda(\mu_{j}) \overline{F_\lambda(\mu_{j-1})}\,\big].
\end{align}
Let $\outsetb := \{\lambda \in \outseta: \theta_\lambda \not=0\}$.

\alstep{Step 3}. Use an off-the-shelf clustering algorithm to select a subset $\outset \subset \outsetb$ that is $5 \ccdelta$ separated:
\begin{align}\label{eq_5sep}
\inf \big\{ |\lambda-\lambda'|_\infty: \lambda, \lambda' \in \outset, \lambda\not=\lambda'\big\} \geq 5 \ccdelta
\end{align}
and is maximal with respect to that property, i.e., no proper superset satisfies \eqref{eq_5sep}.

\alstep{Output}: The set $\outset$ and the numbers $\{\theta_\lambda: \lambda \in \outset\}$.

\noindent(Note that the tests in Steps 1 and 2 only involve grid points 
$\lambda+\mu, \lambda+\mu',\lambda+\mu_j \in \gridlplus$.)
\\
\noindent\rule{\textwidth}{1pt}

We stress that \calgo\, is not introduced
as a competitive algorithm, but only as a means to approximately analyze the performance of \talgo, which is the preferred option in practice.

\subsection{Smoothed analysis}
We now analyze \calgo\, under a stochastic input model that is designed to describe its performance in practice. As input, we consider a random function $F\colon \mathbb{C} \to \mathbb{C}$ on the complex plane of the form:
\begin{align}\label{eq_F}
F= F^1+\sigma F^0,
\end{align}
where $F^1\colon \mathbb{C} \to \mathbb{C}$ is deterministic, $F^0$ is a zero mean random function
with distribution to be specified, and $\sigma>0$ is the \emph{noise level}. 

We will derive probabilistic guarantees that are uniform over deterministic functions $F^1$ satisfying suitable boundedness assumptions and depend explicitly on the noise level $\sigma$ and other relevant parameters such as the grid spacing $\delta$. This form of analysis, known as \emph{smoothed analysis} \cite{smooth}, lies between worst case and average case analysis, and to a great extent inherits the benefits of both. While average case analysis can show that an algorithm succeeds for generic or zero mean random inputs, these rarely model inputs found in practice. In contrast, smoothed analysis measures the performance of the algorithm under slight perturbations of possibly worst case inputs, and can show that \emph{theoretically bad inputs are fragile}, in the sense that a moderate amount of fluctuation (as often encountered in practice) can restore good performance. In applications to the short-time Fourier transform (see Section \ref{sec_stft}) smoothed analysis is closely related to the so-called \emph{noise assisted methods}, see, e.g. \cite{wu2009ensemble, MR3042191, colominas2014improved, torres2011complete}.

\subsection{Input model}\label{sec_input_model}
We consider the random function \eqref{eq_F} and specify $F^0$ and $F^1$. The assumptions we make are motivated by the application introduced in Section \ref{sec_stft}, so that, under the change of variables \eqref{eq_FF}, $F$ corresponds to the short-time Fourier transform
$V_g f$ of a random distribution\footnote{Here, we assume that the window function $g$ is Schwartz and we interpret \eqref{eq_stft} distributionally.} on $\mathbb{R}$:
\begin{align}\label{eq_f_signal}
f = f^1 + \sigma f^0,
\end{align}
where $f^0$ is standard (complex, Gaussian) white noise on $\mathbb{R}$ \cite{Hida1980}, $\sigma>0$ is the noise level, and $f^1$ is an arbitrary distribution with bounded short-time Fourier transform\footnote{In the jargon of time-frequency analysis, we say that $f^1$ belongs to the \emph{modulation space} $M^\infty(\mathbb{R})$ \cite{benyimodulation}.} -- a condition satisfied by any $L^2$ function, and also by all distributions commonly used in signal processing (Dirac deltas, periodizations and Fourier transforms thereof, etc.) \cite{benyimodulation}. 

We now spell out concrete assumptions. To quantify the smoothness and decay of $F^1$ we use the \emph{twisted derivatives}:
\begin{align}\label{eq_td}
	\Dop_1 F(z) = \partial F(z) - \tfrac{\bar z}{2} F(z), \quad \Dop_2 F(z) = \bar \partial F(z) + \tfrac{z}{2} F(z),
\end{align}
where $\partial F(x+iy) = \frac{1}{2} \big(\partial_x F(x+iy) - i \partial_y F(x+iy)\big)$ and $\bar\partial F(x+iy) = \frac{1}{2} \big(\partial_x F(x+iy) + i \partial_y F(x+iy)\big)$
are the usual complex derivatives. The merit of the twisted derivatives is that they commute with the twisted shifts \eqref{eq_ts}.

We assume that $F^1$ is twice continuously differentiable (in the real sense) and, moreover,
\begin{align}\label{A1}
 A:=\max\big\{\|F^1\|_\infty, \|\Dop_j F^1\|_\infty,
 \|\Dop_j \Dop_k F^1\|_\infty: j,k=1,2\big\} < \infty.
\end{align}
The random function $F^0$ is assumed to be 
a circularly symmetric complex Gaussian random field on $\C$ with covariance kernel 
\begin{align}\label{eq_H}
\E \Big[F^0(z) \cdot \overline{F^0(w)} \Big]=
H(z-w) e^{i \Im(z \bar w)}, \qquad z,w\in\mathbb{C},
\end{align}
where $H: \mathbb{C} \to \mathbb{C}$ is a function called \emph{twisted kernel}. Concretely this means that for every $z_1, \ldots, z_n \in \mathbb{C}$, the complex random vector $(F^0(z_1), \ldots, F^0(z_n))$ is normally distributed, circularly symmetric,
with mean zero and covariance matrix
$\big(H(z_k-z_j) e^{i \Im(z_k \bar z_j)}\big)_{k,j}$.

The particular structure of the covariance kernel is a form of stationarity introduced in \cite{hkr22} under the name of \emph{twisted stationarity}, and further studied in \cite{fhkr}. It means that the stochastics of $F^0$ are invariant under twisted shifts \eqref{eq_ts}: \[\E \Big[F^0(z) \cdot \overline{F^0(w)} \Big]=\E \Big[F_\xi^0(z) \cdot \overline{F_\xi^0(w)} \Big], \qquad \xi \in \mathbb{C}.\]

Of the twisted covariance kernel, we assume that 
\begin{align}\label{A2}
H
\in C^6 \mbox{ in the real sense}
\end{align}
and impose the normalization
\begin{align}\label{A3}
	H(0)&=1,
\end{align}
which means that $\var[F^0(z)]=1$. Consequently $\sigma^2$ is the noise variance in \eqref{eq_F}.
We also assume that
\begin{align}\label{A4}
	|H(z)| <1, \qquad z \in \C \setminus \{0\},
\end{align}
which means that $F^0(z)$, $F^0(w)$ are not deterministically correlated if $z\not=w$ --- because the determinant of their joint covariance matrix is
$1-|H(z-w)|^2$. We also assume that $F$ is almost surely $C^2$. Since $H$ is smooth, this is automatically the case, as long as a certain mild technical condition known as \emph{separability} is satisfied; see \cite[Section 1.1]{adler} \cite[Chapter 1]{level}.

As we discuss in Section \ref{sec_stft_input} and Lemma \ref{lem_stft}, for $f$ as in \eqref{eq_f_signal} and
\begin{align}\label{eq_f_stft}
	F(x+iy) := e^{-i xy}  \cdot V_g \, f \big(x/\sqrt{\pi},-y/\sqrt{\pi}\big),
	\end{align}
all assumptions are satisfied with
$H(x+iy) = e^{-i xy} \cdot	V_g g \big(x/\sqrt{\pi},-y/\sqrt{\pi} \big)$. In this case, the constant \eqref{A1} is $A \asymp \|V_g f^1\|_\infty$, so that $A/\sigma$ can be interpreted as a signal-to-noise ratio. There are also other interesting instances of the input model, such as \emph{random poly-entire functions} \cite{hahe13, hkr22}, which are important in mathematical physics \cite{MR2593994, vas00}.

\begin{rem}
As we show in Lemma \ref{lemma_nd}, under this input model, the zero set of $F$ is almost surely a discrete set (that is, each zero is isolated).
\end{rem}

\subsection{Performance guarantees}
Our main theoretical result gives guarantees for the computation of the zero set of a function $F$ restricted to the square \eqref{eq_Omega} from grid samples, providing an estimate for the Wasserstein distance between the atomic measures supported on the true zero set $\{F=0\} \cap \domain$ and on the computed set $\outset$ (up to a small boundary effect). For simplicity we assume that the corners of the computation domain lie on the coarse grid \eqref{eq_cgrid}. 

\begin{theorem}\label{th_main}
Fix a domain width $L \geq 1$, a noise level $\sigma>0$ and a grid resolution parameter $\delta>0$ such that $L/\cdelta \in \mathbb{N}$. Let a realization of a random function $F$ as in \eqref{eq_F}, with \eqref{A1}, \eqref{eq_H}, \eqref{A2}, \eqref{A3}, \eqref{A4}, be observed on the grid $\gridlplus$ and let $\outset$ be the output of the \calgo\, algorithm. Then there exist constants $C,c>0$ such that with probability at least
\begin{align}\label{eq_fp}
1-C \cdot L^2 \cdot \max\big\{1,\big(\log \tfrac{1}{\delta}\big)^2\big\} \cdot \exp\big(c{A^2}/{\sigma^2}\big) \cdot \delta
\end{align}
there is an injective map $\map\colon \{F=0\} \cap \domain \to \outset$ with the following properties:

$\bullet$ \emph{(Each zero is mapped into a near-by numerical zero)}
\begin{align}\label{eq_dist}
	|\map(\zeta) - \zeta|_ \infty \leq 2 \sqrt{\delta}, \qquad \zeta \in \{F=0\} \cap \domain.
\end{align}

$\bullet$ \emph{(Each numerical zero that is away from the boundary arises in this way)}

For each $\lambda \in \outset \cap \domainminus$ there exists
$\zeta \in \{F=0\} \cap \domain$ such that $\lambda=\map(\zeta)$.

$\bullet$ \emph{(Winding numbers are accurately computed)}
\begin{align}\label{wellcomp}
\charge_\zeta = \theta_{\map(\zeta)}, \qquad \zeta \in \{F=0\} \cap \domain.
\end{align}
(Here, the constants $C,c>0$ depend on the twisted kernel $H$, cf. \eqref{eq_H}.)
\end{theorem}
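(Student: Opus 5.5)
The plan is to reduce the theorem to a deterministic statement that holds on a \emph{good event}, and then bound the probability of the bad event by a union bound over coarse cells. For each coarse point $\lambda \in \cgrid_L$, consider the box $Q_\lambda = \lambda + [-\ccdelta,\ccdelta]^2$; since $\ccdelta \approx 2\cdelta$, these boxes cover $\domain$ with bounded overlap. The good event $G$ is the intersection over $\lambda \in \cgrid_{L}$ (about $L^2/\delta$ of them) of the following local events:
\begin{itemize}
\item[(a)] every zero $\zeta$ of $F$ in a slightly enlarged box around $\lambda$ is nondegenerate with $|\det DF(\zeta)| \geq \eta$;
\item[(b)] $F$ has at most one zero within distance $\sim 5\ccdelta$ of $\lambda$;
\item[(c)] $\|F_\lambda\|_{C^2}$ is at most $\sqrt{\log(1/\delta)}$ on that neighborhood;
\item[(d)] there is no ``near-zero without a zero'' configuration, i.e.\ $|F_\lambda|$ is not smaller than $\sim \eta\sqrt{\delta}$ at a point whose neighborhood contains no zero.
\end{itemize}
Here $\eta$ is a threshold of order $1/\log(1/\delta)$. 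The target is $\mathbb{P}(G^c) \lesssim \exp(cA^2/\sigma^2)\,\delta^2\log^2(1/\delta)$ per cell, which after multiplying by $L^2/\delta$ cells gives \eqref{eq_fp}.

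Deterministically, on $G$ the argument proceeds as follows. Suppose a zero $\zeta$ lies in $\domain$, and let $\lambda$ be a coarse point with $|\lambda-\zeta|_\infty \leq \cdelta/2 \le \sqrt\delta$. By twisted stationarity and the fact that twisted derivatives commute with twisted shifts, Taylor expand $F_\lambda$ about $\zeta-\lambda$. Using (a) and (c), for $|\mu|_\infty = \ccdelta$ we get $|F_\lambda(\mu)| \gtrsim \eta\ccdelta - C\log(1/\delta)\ccdelta^2$ while $|F_\lambda(\mu)-F_\lambda(\mu')| \lesssim \sqrt{\log}\,\delta$. Hence \eqref{st11} holds once $\eta \gg \sqrt{\delta}\log$, which is comfortably true. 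The inequality \eqref{st11} forces each increment $\arg[F_\lambda(\mu_j)\overline{F_\lambda(\mu_{j-1})}]$ to be at most $\pi/6$, so the discrete sum equals the true winding number of $F$ along $\partial Q_\lambda$. By (b) and the argument principle for nondegenerate zeros, this winding number is $\charge_\zeta \neq 0$, so $\lambda \in \outsetb$ with $\theta_\lambda = \charge_\zeta$.

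Conversely, if $\lambda \in \outsetb$, then the same small-increment argument shows that the nonzero discrete winding equals the genuine degree on $Q_\lambda$, so $Q_\lambda$ contains a zero. Thus $\outsetb$ consists exactly of coarse points within $\ccdelta$ of a zero. By (b), zeros are more than $5\ccdelta$ apart, so each cluster of selected points belongs to a single zero. Maximality in Step 3 then yields exactly one representative per zero in the interior. For zeros near the boundary, the representative may lie in $\domain\setminus\domainminus$, which is why the second bullet is restricted to $\domainminus$. Defining $\map(\zeta)$ as the representative of $\zeta$'s cluster, it is injective, satisfies $|\map(\zeta)-\zeta|_\infty \leq \ccdelta + \cdots$, and we still need to check that this fits the bound $2\sqrt\delta$. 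In fact the representative could be any point of the cluster, at distance up to $\ccdelta \le \sqrt\delta+\delta$, which is within the bound. Finally, \eqref{wellcomp} follows because every member of the cluster has the same $\theta_\lambda = \charge_\zeta$.

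The main obstacle is the probability estimates, chiefly for (a), (b) and (d), uniformly in $F^1$ with the factor $\exp(cA^2/\sigma^2)$:
\begin{itemize}
\item For (c), I would use Borell--TIS or Dudley together with \eqref{A1}; this gives a $\delta^{10}$ tail.
\item For (a) and (d), I would use a Kac--Rice or first-moment bound. The expected number of zeros in a cell with $|\det DF| < \eta$ is bounded by the joint density of $(F(z), DF(z))$ at $F=0$, which is $\lesssim e^{cA^2/\sigma^2}$ using \eqref{A4} and the $C^6$ regularity for nondegeneracy of the covariance. This gives probability $\lesssim e^{cA^2/\sigma^2}\eta^2\cdot|{\rm cell}| \sim \eta^2\delta$. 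The event (d) is handled similarly by a near-zero density estimate on a $\delta$-net.
\item For (b), two zeros within distance $r = 5\ccdelta$ have probability $\lesssim r^4 \cdot(\text{area})^{-1}$-type bounds from the second factorial moment. This requires the joint density of $(F(z),F(w))$ for close $z,w$, whose degeneracy as $w\to z$ must be controlled via Taylor expansion of $H$ to high order. This is where $C^6$ enters, and it yields $\lesssim \delta^2$ per cell.
\end{itemize}
Choosing $\eta \sim \log^{-1}(1/\delta)$-compatible thresholds is how the $\log^2$ factor appears. I expect this pair-correlation estimate for (b) to be the hardest step.
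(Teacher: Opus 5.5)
\textbf{The probabilistic part does not close as written.} Your deterministic skeleton matches the paper's: Step~1 passes next to a well-conditioned zero, small fine-scale increments give the true winding number, the Poincar\'e index formula applies, and separation gives clustering and injectivity. What is missing is the paper's central estimate.

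First, your bookkeeping is inconsistent. With $\eta\asymp 1/\log(1/\delta)$, your own first-moment bound for (a) is $\asymp\eta^2$ per unit area. That is a failure probability $\asymp L^2/\log^2(1/\delta)$, not $O(\delta)$. The threshold must be $\eta\asymp K\sigma^2\sqrt{\delta}\log(1/\delta)$, just above what your Step~1 computation needs.

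With that $\eta$, (b) is free and no pair-correlation estimate is needed. Two zeros at distance $r$ give $|DF(\xi_1)v|\lesssim\sigma\sqrt{\log(1/\delta)}\,r$ on (c), hence $|\det DF(\xi_1)|\lesssim\sigma^2\log(1/\delta)\,r$, which contradicts (a) for $r\lesssim\sqrt\delta$. This is essentially the paper's separation argument.

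Event (d), however, cannot come from a union bound over a $\delta$-net at the scale $|F|\approx\delta$ relevant to Step~1. There are $\asymp L^2\delta^{-2}$ net points, each with probability $\asymp\delta^2$ (up to logarithms) of so small a value, so the bound is $\asymp L^2$. The condition ``no zero nearby'' only helps once it is converted into a small Jacobian, and that is what the paper does. It proves $\mathbb{P}(|F_\lambda(0)|\le\alpha,\ |\det DF_\lambda(0)|\le\beta)\le C\alpha^2\beta\sigma^{-4}$ uniformly in $F^1$ (Proposition~\ref{prop_gridevent}). It applies this with $\alpha\asymp\sigma\delta\sqrt{\log(1/\delta)}$ and $\beta\asymp\sigma^2\sqrt\delta\log(1/\delta)$, and only on the sparse grid $X_{L+5}$, whose cardinality is $\lesssim L^2\delta^{-3/2}$. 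This yields exactly $L^2\delta\log^2(1/\delta)$. Separation, Step~1 success near zeros, and correctness of the discrete winding are then derived deterministically from this single event.

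Relatedly, \eqref{st11} at grid points does not by itself give the true winding number. You also need $|F_\lambda(\mu_{j-1})|\gg\sigma\sqrt{\log(1/\delta)}\,\delta^2$, so that Lemma~\ref{lemma_arg} applies on each whole segment. At that level a plain union bound over the fine grid does suffice.

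\textbf{Your density claim is false.} Assumption (A4) and $C^6$ regularity do not make the covariance of $(F(z),\partial F(z),\bar\partial F(z))$ nondegenerate. The kernel $H(z)=e^{-|z|^2/2}$ satisfies every hypothesis, yet $\bar\partial F^0(0)=0$ almost surely (Lemma~\ref{le_gef}), so $(F(z),DF(z))$ has no joint density. Proposition~\ref{prop_singular} shows this is, up to a symplectic change of variables and a phase factor, the only degenerate case. There the $\alpha^2\beta$ bound is proved separately, using that $(F^{GEF}(0),\partial F^{GEF}(0))$ is standard Gaussian together with the annulus-area estimate. Your argument for (a), and any uniform near-diagonal analysis for (b), therefore breaks down for analytic-type kernels, which the theorem covers.

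A Kac--Rice version of your (a) can be rescued with the correct $\eta$. The expected number of zeros in $\Omega_{L+1}$ with $|\det DF|<\eta$ is at most $\eta\int\limsup_{\alpha\to 0}(\pi\alpha^2)^{-1}\,\mathbb{P}(|F(z)|\le\alpha,\ |\det DF(z)|\le\eta)\,dA(z)\lesssim L^2\eta^2\sigma^{-4}$. But that uniform bound is precisely Proposition~\ref{prop_gridevent}, degenerate case included.
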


\subsection{Technical overview and comparison with other work}

Our work introduces and analyzes an algorithm for learning the zero set of the short-time Fourier transform (STFT). We model signals as continuous-time (analog) and assume they are observed through finitely many measurements. A central challenge is to clarify how the analog model relates to the resulting finite-dimensional measurement setting. In this sense, our work is related to similar endeavors on other nonlinear inverse problems for the STFT, such as phase retrieval \cite{MR5010357,MR4709358}.

To our knowledge, the Minimal Grid Neighbors (MGN) algorithm described in Section \ref{sec_1} is the most effective method to compute zeros of \emph{weighted analytic functions} from grid values. In our experience, other methods such as thresholding (using small values as proxies for zeros) or extrapolation (using the grid samples to approximately evaluate the function at an arbitrary point, so as to, say, apply an iterative root finder) lack the effectiveness or simplicity of MGN (see e.g., \cite[Sections 1.2 and 5]{efkr24}) and are not preferred in practice. Notwithstanding, certain proposed homotopy algorithms \cite{10163803} use MGN as an initialization step. While we are not aware of performance guarantees for MGN, a slight variation of MGN --- called Adapted Minimal Grid Neighbors (AMN) --- was introduced and analyzed in \cite{efkr24}. We developed \algo\, to address the 
problem of computing zeros of possibly \emph{non-analytic functions} as cost-effectively as with MGN/AMN. 

Let us now compare the performance guarantees that we prove for \calgo\, to those available for AMN. The following compares Theorem \ref{th_main} to 
\cite[Theorem 2.2]{efkr24}, which provides an estimate of the Wasserstein distance between the atomic measures supported on the true zero set of an analytic function and the set computed by AMN.
\begin{itemize}[leftmargin=0.3cm, itemsep=0.2cm,topsep=4pt]
\item[$\bullet$] The input model considered in \cite{efkr24} is a particular instance of the one introduced in Section \ref{sec_input_model}: there, functions are assumed to be analytic, while here functions may be non-analytic --- see also Lemma \ref{lem_stft}.

\item[$\bullet$] The failure probability in Theorem \ref{th_main} is $O(\log^2({1}/{\delta}) \cdot \delta)$, while the corresponding statement in \cite[Theorem 2.2]{efkr24} fails with probability $O(\log^2({1}/{\delta}) \cdot \delta^4)$.

\item[$\bullet$] The maximal distortion in Theorem \ref{th_main} is $O({\delta}^{1/2})$, while that in \cite[Theorem 2.2]{efkr24} is 
$O(\delta)$.
\end{itemize}
We believe that the weaker performance proved for \calgo\, in comparison to AMN is not an artifact of our argument, but rather that it is commensurate with the fundamentally more challenging task of finding zeros of non-analytic inputs. Indeed, the zero set of a non-zero analytic function $F$ is discrete, and, moreover, becomes well-separated after a moderate amount of noise is added to $F$ \cite{efkr24}. In contrast, zeros of non-analytic functions can lie along curves, which persists as \emph{almost nodal curves} after adding randomness; see Fig.~\ref{fig:herm_compare} for an illustration in the context of the short-time Fourier transform.
\begin{figure*}[tbp]
    \centering
    \begin{subfigure}[b]{0.47\textwidth}
    \includegraphics[width=0.98\textwidth]{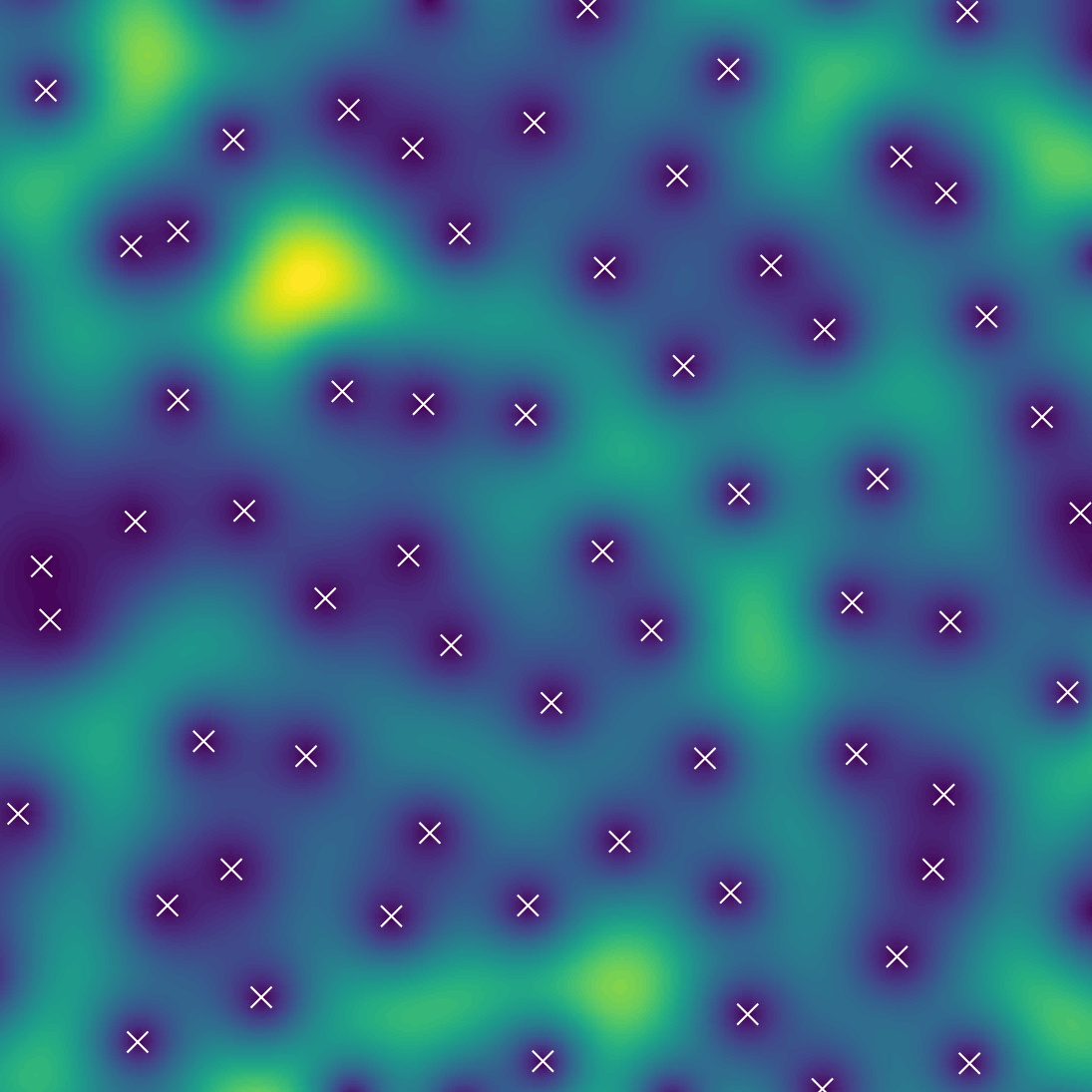}
    \end{subfigure}
    \hfill
    \begin{subfigure}[b]{0.47\textwidth}
    \centering
    \includegraphics[width=0.98\textwidth]{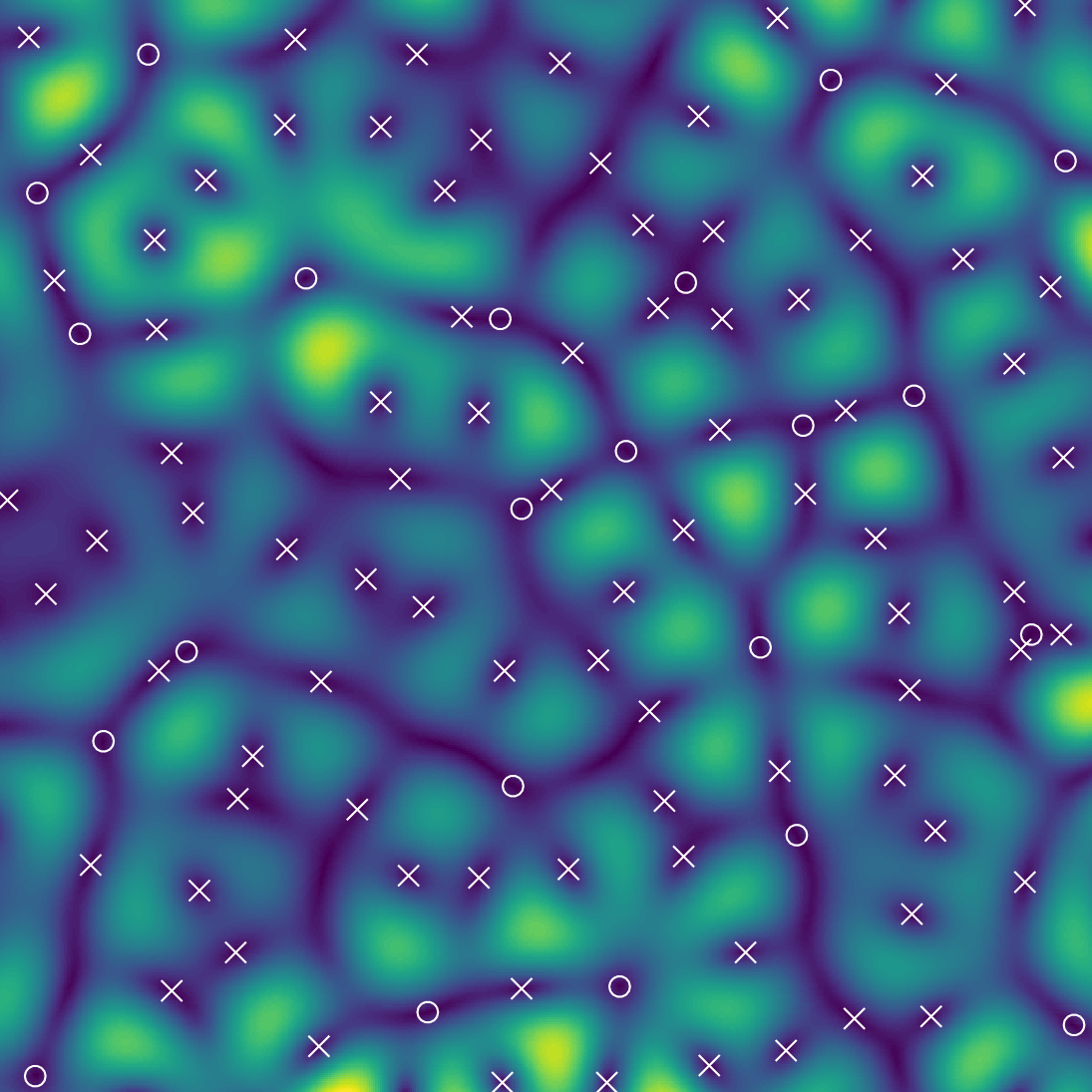}
    \end{subfigure}
    \caption{The squared absolute value of the STFT of (the same realization of) complex white noise using a Gaussian (left) or Hermite function of order 1 (right) as window, together with their zeros of positive (crosses) and negative (circles) charge. On the left-hand side, the magnitude increases locally at the same rate in all directions; on the right-hand side, zeros lie along ``low-magnitude curves'' (approximate nodal curves), which explains the difficulty in their computation. The positively (resp. negatively) charged zeros on the right-hand side correspond exactly to the saddle-points (resp. local maxima) of the function on the left-hand side.}
    \label{fig:herm_compare}
\end{figure*}
Thus, in comparison to the analytic case, much more randomness is required to regularize the zero set and turn it into a well-spread point pattern \cite{MR2966361,hor25}. In fact, as we show in Section \ref{sec_analysis}, analytic inputs form a ``thin'' set within our input model, and substantial effort is put into treating it separately, as a degenerate case (cf.\ Propositions \ref{prop_singular} and \ref{prop_gridevent}).

In the formulation of Theorem \ref{th_main} and the organization of its proof we tried to parallel those of \cite{efkr24}, so as to
better highlight the key differences between the analytic and non-analytic scenarios. Notably, as non-analytic functions depend non-trivially on the variables $z$ and $\bar{z}$, linearization arguments are much more subtle
because of different growth rates in different directions; see Fig.~\ref{fig:herm_compare}. Another central technical challenge in the present setting is that zeros are detected by means of topological index formulae, rather than by subharmonicity considerations, which requires new arguments. While the proofs in \cite{efkr24} rely on the theory of Gaussian Entire Functions \cite{NSwhat, gafbook}, in this article we use and further develop the parallel theory of Gaussian Weyl-Heisenberg Functions \cite{hkr22, fhkr}.

\subsection{Organization}
The remainder of the article is organized as follows. Section \ref{sec_analysis} analyzes the stochastic input model and the geometry of the corresponding zero sets and winding numbers. Section \ref{sec_proof_main} provides a proof of Theorem \ref{th_main}. In Section \ref{sec_num}, we provide several numerical experiments that illustrate the effectiveness of \algo\, in time-frequency analysis and the need for twisted shifts. We also conduct an empirical comparison with \calgo, which shows that \algo\, should be preferred in practice.

\section{Analysis of the Input Model}\label{sec_analysis}
\subsection{Preliminaries and notation}
Closed cubes are denoted $Q_r(w) := \{z \in \C: |z-w|_\infty \leq r\}$, while open disks are denoted
$D_r = \{z \in \mathbb{C}: |z|<r\}$. The \emph{twisted shifts} are defined by \eqref{eq_ts}. We let $\arg\colon \C\setminus\{0\} \to (-\pi,\pi]$ denote the principal branch of the argument, and we further set $\arg(0) := 0$ for convenience.

The complex derivatives (Wirtinger operators) of a function $F\colon \mathbb{C} \to \mathbb{C}$ are:
\begin{align*}
\partial F(x+iy) = \frac{1}{2} \big(\partial_x F(x+iy) - i \partial_y F(x+iy)\big),
\\
\bar\partial F(x+iy) = \frac{1}{2} \big(\partial_x F(x+iy) + i \partial_y F(x+iy)\big),
\end{align*}
while derivatives with respect to real variables $x,y$ are denoted $\partial_x F, \partial_y F$ and extended to multi-indices in the usual way:
\begin{align}
\partial^\alpha := \partial_x^{\alpha_1} \partial_y^{\alpha_2}, \qquad \alpha=(\alpha_1,\alpha_2) \in \mathbb{N}_0^2.
\end{align}
Directional derivatives are denoted:
\[\partial_{v}F(z)= v\partial F(z) + \bar v \bar \partial F(z)= v_1 \partial_x F(z) + v_2 \partial_y F(z),\]
where $v=v_1+iv_2 \in \C$ and $|v|=1$.

Letting $DF(z) \in \mathbb{R}^{2\times 2}$ be the differential matrix of $F\colon \mathbb{R}^2 \to \mathbb{R}^2$, we have the following expressions:
\begin{align}\label{eq_jacv}
\det DF(z) = |\partial F(z)|^2 - |\bar\partial F(z)|^2= \Im\big[\partial_v F(z) \cdot \overline{\partial_{-iv}F(z)} \big], \qquad |v|=1.
\end{align}
We also use the \emph{twisted derivatives} \eqref{eq_td}, which commute with the twisted shifts \eqref{eq_ts}:
\begin{align}\label{eq_comm}
\Dop_j (F_w) = (\Dop_j F)_w, \qquad w \in \mathbb{C}.
\end{align}

The \emph{charge} of $F$ at a point $z \in \mathbb{C}$ is
\begin{align*}
\charge_z = \sgn \det DF (z) \in \{-1,0,1\}.
\end{align*}
For a curve $\gamma\colon [a,b] \to \mathbb{C}$, $\gamma(t) = x(t) + i y(t)$, the integrals of the forms $F dz$ and $F d\bar{z}$ are defined as usual:
\begin{align*}
\int_{\gamma} F(z)\,dz = \int_{a}^b F(\gamma(t)) (x'(t)+iy'(t))\,dt, \\
\int_{\gamma} F(z)\,d\bar{z} = \int_{a}^b F(\gamma(t)) (x'(t)-iy'(t))\,dt.
\end{align*}
We also use the following notation for the \emph{total derivative}:
\begin{align*}
dF = \partial F dz + \bar{\partial} F d\bar{z},
\end{align*}
and denote by $dA$ the differential of the (Lebesgue) area measure on $\mathbb{C}$.

Throughout the manuscript we consider the \emph{acquisition lattice} \eqref{eq_Lambda} with spacing $\delta>0$ and the \emph{computation domain} \eqref{eq_Omega} with half side-length $L>0$.
Associated with the grid-spacing $\delta$, we consider the related quantities
\begin{align*}
\cdelta = 2 
\left\lceil {\delta^{-1/2}}/{4} \right\rceil \delta,
\qquad
\ccdelta = \left\lceil{\delta^{-1/2}}\right\rceil  \delta,
\end{align*}
which are $\approx \sqrt{\delta}$, and the \emph{coarse grid} \eqref{eq_cgrid}, with spacing $\cdelta$. The restriction of grids to a square are denoted
\begin{align}
\Lambda_M = \{ \lambda \in \Lambda: |\lambda|_\infty \leq M\},
\qquad\qquad
\cgrid_M = \{ \lambda \in \cgrid: |\lambda|_\infty \leq M\}.
\end{align}
In particular, we write $\Lambda_L, \Lambda_{L + \delta}$, etc. for grid points lying on or close to the computation domain \eqref{eq_Omega}.

\subsection{Twisted shifts and Euclidean derivatives}
As mentioned, the twisted derivatives \eqref{eq_td}, commute with the twisted shifts \eqref{eq_ts}, cf.\ \eqref{eq_comm}. We now observe that for small parameters, Euclidean derivatives and twisted shifts enjoy an approximately Euclidean covariance relation.
\begin{lemma}\label{lem_dervery}
The following holds for a (in the real sense) differentiable function $F\colon\mathbb{C} \to \mathbb{C}$:
\begin{align}
|\partial_x F_{w+w'}(z)| \leq |w'|_\infty |F_w(z+w')| + |\partial_x F_w(z+w')|, \qquad z,w,w' \in \mathbb{C},
\\
|\partial_y F_{w+w'}(z)| \leq |w'|_\infty |F_w(z+w')| + |\partial_y F_w(z+w')|, \qquad z,w,w' \in \mathbb{C}.
\end{align}
\end{lemma}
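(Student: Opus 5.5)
The plan is a direct computation: write $F_{w+w'}$ as $F_w(\cdot+w')$ times a unimodular phase factor that depends on $z$ linearly in the exponent, then differentiate with the product rule. No earlier result is needed beyond the definition \eqref{eq_ts} of the twisted shifts.

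\emph{Step 1 (relating the two shifts).} From \eqref{eq_ts},
\begin{align}
F_{w+w'}(z) = F(z+w+w')\, e^{-i\Im(z\bar w)}\, e^{-i \Im(z\bar w')}
\end{align}
and
\begin{align}
F_w(z+w') = F(z+w'+w)\, e^{-i\Im(z\bar w)}\, e^{-i\Im(w'\bar w)}.
\end{align}
Dividing the first identity by the second gives
\begin{align}
F_{w+w'}(z) = e^{i\Im(w'\bar w)}\, e^{-i\Im(z\bar w')}\, F_w(z+w').
\end{align}
Here $e^{i\Im(w'\bar w)}$ is a constant of modulus one, so the only $z$-dependence besides $F_w(z+w')$ is the factor $e^{-i\Im(z\bar w')}$.

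\emph{Step 2 (differentiating the phase).} Write $z=x+iy$ and $w'=a+ib$. Then $\Im(z\bar w') = ya - xb$. Consequently
\begin{align}
\partial_x e^{-i\Im(z\bar w')} = i b\, e^{-i\Im(z\bar w')}
\end{align}
and
\begin{align}
\partial_y e^{-i\Im(z\bar w')} = -i a\, e^{-i\Im(z\bar w')}.
\end{align}
Both factors $|b|$ and $|a|$ are bounded by $|w'|_\infty$.

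\emph{Step 3 (product rule).} The chain rule gives $\partial_x[F_w(z+w')] = (\partial_x F_w)(z+w')$, since the translation by $w'$ has identity Jacobian; the same holds for $\partial_y$. Applying the product rule to the identity from Step 1 and taking moduli, the unimodular factors drop out. The triangle inequality then yields
\begin{align}
|\partial_x F_{w+w'}(z)| \le |b|\,|F_w(z+w')| + |\partial_x F_w(z+w')|,
\end{align}
and analogously for $\partial_y$ with $|a|$ in place of $|b|$. Bounding $|a|,|b|\le |w'|_\infty$ gives both claimed inequalities.

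There is no real obstacle here. The only point needing care is the sign bookkeeping in $\Im(z\bar w')$, which is harmless because only absolute values enter the final estimate.
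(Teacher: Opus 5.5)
Your proposal is correct and is essentially the paper's proof: the same identity $F_{w+w'}(z)=e^{i\Im(w'\bar w)}e^{-i\Im(z\bar w')}F_w(z+w')$ is differentiated with the product rule, giving the factors $i\Im(w')$ and $-i\Re(w')$, and these are bounded by $|w'|_\infty$. One cosmetic point: in Step 1, obtain the identity by comparing the unimodular phase factors rather than by literally dividing the two identities, since $F(z+w+w')$ may vanish.
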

\begin{proof}
We note that $F_{w+w'}(z)=e^{i\Im(w'\overline{w})}
e^{-i \Im(z \overline{w'})} F_w(z+w')$ and compute
\begin{align*}
e^{-i\Im(w'\overline{w})}\partial_x F_{w+w'}(z) = i \Im(w') e^{-i \Im(z \overline{w'})} F_w(z+w') + e^{-i \Im(z \overline{w'})} \partial_x F_w(z+w'),
\\
e^{-i\Im(w'\overline{w})}\partial_y F_{w+w'}(z) = -i \Re(w') e^{-i \Im(z \overline{w'})} F_w(z+w') + e^{-i \Im(z \overline{w'})} \partial_y F_w(z+w'),
\end{align*}
from where the estimate follows.
\end{proof}
\subsection{The sparse grid}
With a slight abuse of language, we define the \emph{sparse grid} as the following subset of $\Lambda_L$:
\begin{align}\label{eq_gs}
\gridsparse:= \big\{ \mu \in \Lambda_L: |\mu- \lambda|_\infty = \ccdelta\text{ for some } \lambda \in \cgrid \big\},
\end{align}
which consists of all (fine) grid points $\mu$ that lie on the boundary boxes $Q_{\ccdelta}(\lambda)$, $\lambda \in \cgrid$; see Figure \ref{fig:grids}.
This set is instrumental in the analysis of \calgo\, because the comparison in Step 1 involves evaluating $F$ on points $\lambda+\mu \in \gridsparseno_{L+\ccdelta}$, cf. \eqref{st11}. We record the following simple facts. 
\begin{lemma}\label{lemma_sparse}
Let $L\geq 1$, $0<\delta \leq 1$ and $\gridsparse$ be given by \eqref{eq_gs}. Then
the cardinality of the sparse grid satisfies
\begin{align}\label{eq_eg}
\# \gridsparse \leq C L^2\delta^{-3/2},
\end{align}
for a universal constant $C$.

In addition, assume that $\delta<1/16$. Then
given $\xi \in \Omega_L$ and $v\in\mathbb{C}$ with $|v|=1$, there exists $t \in [2,4]$ and $\tau \in \gridsparseplus$ such that $|\xi+\sqrt{\delta}t v - \tau | < \delta$. 
\end{lemma}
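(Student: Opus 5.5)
The proof splits into a counting estimate and a covering estimate. Both rest on the observation that the sparse grid is a union of boundaries of boxes $Q_{\ccdelta}(\lambda)$, with $\lambda$ ranging over the coarse grid $\cgrid$, whose spacing is $\cdelta\approx\sqrt{\delta}$.

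For \eqref{eq_eg}, I would count the relevant coarse points. Every $\mu\in\gridsparse$ lies within $\ccdelta\le 2\sqrt{\delta}\le 2$ of some $\lambda\in\cgrid$ with $|\lambda|_\infty\le L+\ccdelta\le L+2$, so the relevant coarse points number at most $C(L+2)^2/(\cdelta)^2$. Since $\cdelta\ge \tfrac12\sqrt{\delta}$ (because $\cdelta = 2\lceil \delta^{-1/2}/4\rceil\delta\ge \sqrt{\delta}/2$) and $L\ge1$, this is at most $CL^2\delta^{-1}$. Each boundary $\partial Q_{\ccdelta}(\lambda)\cap\Lambda$ contains at most $8\ccdelta/\delta+4\le C\delta^{-1/2}$ fine grid points, using $\ccdelta\le \delta^{1/2}+\delta$. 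Multiplying gives $\#\gridsparse\le CL^2\delta^{-3/2}$.

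For the covering statement, I would use the fact that the union $U$ of all boundaries $\partial Q_{\ccdelta}(\lambda)$, $\lambda\in\cgrid$, contains full horizontal and vertical lines. Indeed $\ccdelta$ is an integer multiple of $\delta$, and adjacent boundary boxes overlap or tile because $2\ccdelta\ge\cdelta$. Concretely, the lines $x=\cdelta k\pm\ccdelta$, and likewise in $y$, are covered, since each such vertical line is covered by the sides of the boxes centered at $\cdelta k+i\cdelta j$ for all $j$, and these sides have length $2\ccdelta\ge\cdelta$. So $U$ contains the grid of lines $\{x\in \cdelta\mathbb{Z}+\ccdelta\}$ and $\{y\in\cdelta\mathbb{Z}+\ccdelta\}$, which have spacing $\cdelta\le \tfrac12\sqrt{\delta}+2\delta<\sqrt{\delta}$ (this uses $\delta<1/16$). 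Consider the ray segment $s\mapsto \xi+\sqrt{\delta}sv$, $s\in[2,4]$, of Euclidean length $2\sqrt{\delta}$. Since $|v|=1$, either $|v_1|\ge 1/\sqrt2$ or $|v_2|\ge1/\sqrt2$. Say $|v_1|\ge1/\sqrt2$; then the $x$-coordinate sweeps an interval of length at least $\sqrt{2}\sqrt{\delta}>\cdelta$. Hence it crosses some line $x=x_0$ of the family, giving $t\in[2,4]$ with $\xi+\sqrt{\delta}tv\in U$.

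It remains to find a fine grid point nearby. The point $p=\xi+\sqrt{\delta}tv$ lies on a line $x=x_0$ with $x_0\in\delta\mathbb{Z}$, inside the side of some box $Q_{\ccdelta}(\lambda)$. Rounding $\Im p$ to the nearest multiple of $\delta$ within that side yields $\tau\in\Lambda$ with $|\tau-\lambda|_\infty=\ccdelta$ and $|p-\tau|\le\delta/2<\delta$. The sides contain their endpoints, which lie on the fine grid, so the rounding stays on the side. Finally, I would check that $\tau\in\Lambda_{L+1}$: $|\tau|_\infty\le|\xi|_\infty+4\sqrt{\delta}+\delta\le L+1$ since $\delta<1/16$ gives $4\sqrt{\delta}+\delta<1$. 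Hence $\tau\in\gridsparseplus$.

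I expect the main delicate point to be the bookkeeping of constants, namely verifying $\cdelta<\sqrt{2\delta}$ and $4\sqrt\delta+\delta\le1$ under $\delta<1/16$, together with confirming that the box sides really form continuous lines. I would check the latter carefully from $\ccdelta\ge\delta^{1/2}$ and $\cdelta\le \delta^{1/2}/2+2\delta\le 2\ccdelta$.
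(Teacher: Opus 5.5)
Your proposal follows the paper's proof. The counting bound is the same union bound over $\lesssim L^2/\delta$ coarse centers times $\lesssim \delta^{-1/2}$ boundary points. Your ``full lines of spacing $\delta^{*}<\sqrt{\delta}$'' plus coordinate-projection argument is a more explicit version of the paper's observation that the mesh $M$ contains no segment of length more than $\sqrt{2}\,\delta^{*}$.

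There is one false step at the end: the claim that $\delta<1/16$ implies $4\sqrt{\delta}+\delta<1$. This holds only for $\delta<(\sqrt{5}-2)^2\approx 0.0557$. For example, $\delta=0.06$ gives $4\sqrt{0.06}+0.06\approx 1.04$, and the bound still fails near $\delta=1/16$ even with $\delta/2$ in place of $\delta$. So, as written, you have not shown $\tau\in X_{L+1}$ on the whole range $\delta<1/16$, even though you flagged exactly this inequality as the delicate point.

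The fix is easy, and there are two ways to do it:
\begin{enumerate}
\item Do what the paper does. Since $|v|_\infty\le 1$, $t\le 4$ and $\sqrt{\delta}<1/4$, you already have $|p|_\infty<L+1$. Round $\Im p$ toward $0$ instead of to the nearest multiple of $\delta$. This still stays on the side, because its endpoints lie in $\delta\mathbb{Z}$. It still gives $|p-\tau|<\delta$, and now $|\tau|_\infty\le|p|_\infty<L+1$.
\item Keep your rounding and bound the two coordinates separately. Take the case $|v_1|\ge 1/\sqrt{2}$; the other case is symmetric. The coordinate $x_0=\Re p$ is not rounded and satisfies $|x_0|\le L+4\sqrt{\delta}<L+1$. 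The rounded coordinate satisfies $|\Im p|\le L+4\sqrt{\delta}\,|v_2|\le L+2\sqrt{2}\sqrt{\delta}<L+0.71$, so $|\Im\tau|\le L+0.71+\delta/2<L+1$.
\end{enumerate}
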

\begin{proof}
To prove \eqref{eq_eg}, note that if $\lambda \in \cgrid$ is such that there exists $\mu \in \Lambda_L$ with $|\mu- \lambda|_\infty = \ccdelta$, then $|\lambda|_\infty \leq L+2$, so there are at most $\lesssim L^2/\delta$ such points. For each such $\lambda$, there are at most $\lesssim \ccdelta/\delta \asymp \delta^{-1/2}$ points $\mu \in \Lambda$ with $|\mu- \lambda|_\infty = \ccdelta$. Hence, \eqref{eq_eg} follows from a union bound; see Figure \ref{fig:grids}. 

For the second assertion, consider the set of vertical and horizontal segments \[M = \{ z \in \mathbb{C}\,:\, |z- \lambda|_\infty = \ccdelta\text{ for some } \lambda \in \cgrid \}.\] The grid $\cgrid$ 
has spacing $\cdelta$ and the following shows that $\cdelta <  \ccdelta \leq 2 \cdelta$:
\begin{align*}
    \ccdelta
    & \geq \sqrt{\delta}
    > \big(\sqrt{\delta}/2+2\delta\big)
    = 2 \delta \big(\delta^{-1/2}/4+1\big)
    > \cdelta,
    \\
    \ccdelta
    & = \left\lceil{\delta^{-1/2}}\right\rceil  \delta
    \leq 4 \left\lceil{\delta^{-1/2}}/4\right\rceil  \delta
    = 2 \cdelta.
\end{align*}
For each $\lambda \in \cgrid$,
we consider the four line segments
forming the box
$\{z\,:\,|z- \lambda|_\infty = \ccdelta\}$. Since $\cdelta <  \ccdelta \leq 2 \cdelta$, the top or bottom horizontal segments 
in $M$ defined by horizontally neighboring $\lambda \in \cgrid$ are overlapping, while the corresponding vertical segments are at most $\cdelta$ apart (and vice versa; see Figure~\ref{fig:grids}). Hence,
$M$ is a mesh and the longest line segment disjoint of $M$ has length at most $\sqrt{2}\cdelta$.
Since
\begin{equation*}
    \big(4-2\big)\sqrt{\delta} 
    > \sqrt{2}\sqrt{\delta}
    > \sqrt{2}\cdelta,
\end{equation*}
the line segment $\{\xi+\sqrt{\delta}t v: t \in [2,4]\}$ must intersect $M$. That is, there exists $t \in [2,4]$ such that
$z := \xi+\sqrt{\delta}t v \in M$.
This means that there exists $\lambda \in \cgrid$ such that $|z- \lambda|_\infty= \ccdelta$. We note that 
$|z|_\infty=\lvert\xi+\sqrt{\delta}t v\rvert_\infty < L + 1/4 \cdot 4 = L+1$ and take $\tau \in \grid$ with $|z - \tau| < \delta$ and in the same box boundary, $|\tau-\lambda|_\infty= \ccdelta$, with the property that $|\tau|_\infty\leq|z|_\infty$. Hence $\tau \in \gridsparseplus$, as desired.
\end{proof}

\begin{figure*}[tbp]
    \centering
    \begin{tikzpicture}
    \draw (0, 0) node[inner sep=0] {\includegraphics[width=0.78\textwidth]{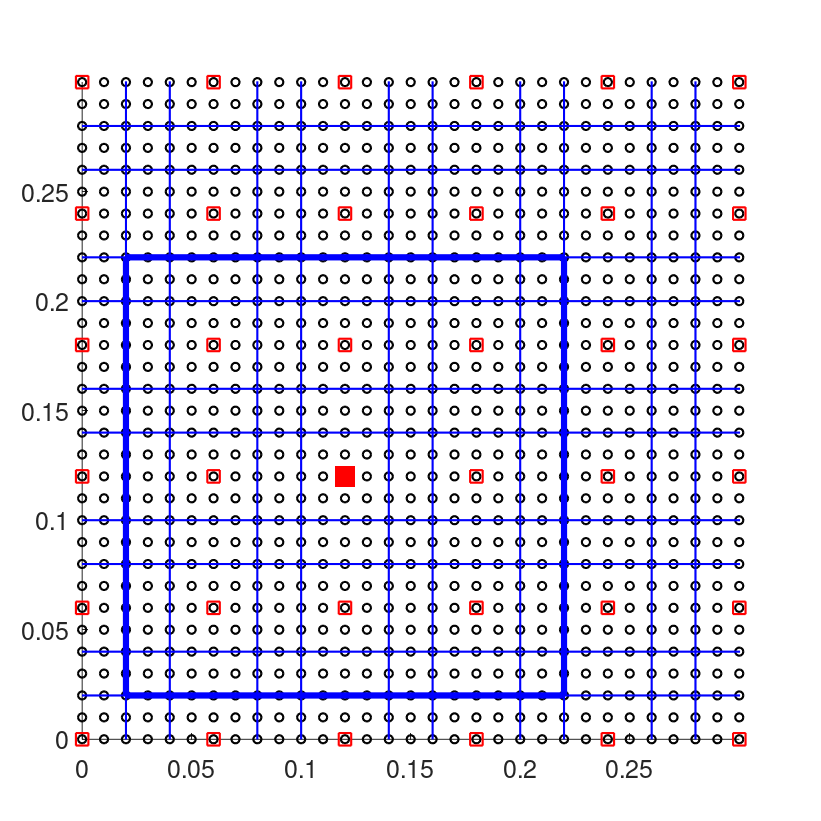}};
    \draw[thick,decorate,decoration={brace,amplitude=4pt}] (5.7, -5.13) -- (5.7, -5.52) node[midway, right,xshift=2pt,]{$\delta$};
    \draw[thick,decorate,decoration={brace,amplitude=4pt}] (6.2, -3.3) -- (6.2, -5.52) node[midway, right,xshift=2pt,]{$\delta^*$};
    \draw[thick,decorate,decoration={brace,amplitude=4pt}] (6.8, -1.82) -- (6.8, -5.52) node[midway, right,xshift=2pt,]{$\delta^{**}$};
    \end{tikzpicture}
    \caption{Illustration of the different grids. Black circles mark the elements of the fine grid $\Lambda_L$, red squares mark the elements of the coarse grid $\Delta$, blue lines mark the set $M$
used in the proof of Lemma \ref{lemma_sparse}. The thick blue lines illustrate the part of the set
    $M$ originating from the element $\lambda \in \Delta$ highlighted by the filled red square. The illustration is based on $\delta=1/100$, and thus $\delta^*=6/100$ and $\delta^{**}=10/100$. }
    \label{fig:grids}
\end{figure*}

\subsection{The short-time Fourier transform as a main example}\label{sec_stft_input}
We now discuss why the short-time Fourier transform of a signal impacted by noise is an instance of the input model. For the formulation, we use the Schwartz class $\mathcal{S}(\mathbb{R})$ and the so-called modulation space $M^\infty(\mathbb{R})$ consisting of distributions with bounded short-time Fourier transform \cite{benyimodulation}. We also interpret Gaussian white noise as a random distribution (generalized stochastic process) \cite{Hida1980}, which allows us to calculate its short-time Fourier transform; see \cite[Section 6.1]{hkr22}, or \cite{bh} for a different approach.
\begin{lemma}\label{lem_stft}
Let $g \in \mathcal{S}(\mathbb{R})$ be normalized by $\|g\|_2=1$, $f^0$  standard complex-valued Gaussian white noise, $f^1 \in M^\infty(\mathbb{R})$ and $\sigma>0$. Set $F=F^1+\sigma F^0$ with
	\begin{align}
	F^j(z) := e^{-i xy}  \cdot V_g \, f^j \big(x/\sqrt{\pi},-y/\sqrt{\pi} \big), \qquad z=x+iy.
	\end{align}
    Then $F$ is almost surely twice continuously differentiable in the real sense, and
\eqref{A1}, \eqref{eq_H}, \eqref{A2}, \eqref{A3}, \eqref{A4} hold with
	$H(x+iy) = e^{-i xy} \cdot
	V_g g \big(x/\sqrt{\pi},-y/\sqrt{\pi} \big)$. In addition, the constant in \eqref{A1} satisfies $A \asymp \|V_g f^1\|_\infty$.
\end{lemma}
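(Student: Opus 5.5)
The proof is mostly an exercise in the covariance structure of the short-time Fourier transform, together with the standard realization of white noise through an orthonormal basis. I will proceed in four steps.

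\textbf{Step 1: covariance of $F^0$.} Formally, $\E[f^0(t)\overline{f^0(s)}]=\delta(t-s)$, which gives
\begin{align*}
\E\big[V_g f^0(z_1)\overline{V_g f^0(z_2)}\big]=\langle M_{\xi_2}T_{x_2}g, M_{\xi_1}T_{x_1}g\rangle = V_g g(x_1-x_2,\xi_1-\xi_2)\, e^{\text{phase}}.
\end{align*}
I make this rigorous in either of two ways. One is to pair the generalized process with the Schwartz functions $M_\xi T_x g$. The other is to expand $f^0=\sum_n \gamma_n h_n$ in Hermite functions with i.i.d.\ standard complex Gaussians $\gamma_n$; then $V_g f^0=\sum_n\gamma_n V_g h_n$ converges locally uniformly, together with all derivatives, almost surely. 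This convergence follows from the rapid decay of the $V_g h_n$ in $C^k$ norms on compacts, by Kolmogorov--Khintchine or a Borel--Cantelli argument. It also yields Gaussianity, circular symmetry and almost sure $C^2$ smoothness of $F^0$ at once.

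Inserting the change of variables $x/\sqrt\pi,\,-y/\sqrt\pi$ and the factor $e^{-ixy}$, the phase collects into $e^{i\Im(z\bar w)}$ multiplying $H(z-w)$, where $H(x+iy)=e^{-ixy}V_gg(x/\sqrt\pi,-y/\sqrt\pi)$. This is a direct bookkeeping of the cross terms $x_1y_2-x_2y_1$, which I will verify by expanding. It establishes \eqref{eq_H}.

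\textbf{Step 2: properties of $H$.} The normalization \eqref{A3} holds because $V_gg(0)=\|g\|_2^2=1$. Smoothness \eqref{A2} (indeed $C^\infty$) holds because $V_gg$ is the short-time Fourier transform of a Schwartz function by a Schwartz window, and is therefore Schwartz on $\mathbb{R}^2$. For \eqref{A4} I use Cauchy--Schwarz: $|V_gg(x,\xi)|=|\langle g,M_\xi T_xg\rangle|\le 1$. Equality would force $M_\xi T_x g=cg$, and this is impossible for $(x,\xi)\neq0$ with $g\in L^2$. Indeed, $|g(t-x)|=|g(t)|$ with $x\neq0$ contradicts integrability. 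If $x=0$, then $e^{2\pi i\xi t}g=cg$ on the support of $g$, which forces $\xi=0$ because the support has positive measure.

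\textbf{Step 3: the bound \eqref{A1} on $F^1$.} Here I compute the twisted derivatives of $F^1$. Writing out \eqref{eq_td} for $F^1=e^{-ixy}V_gf^1(\cdot)$, the terms $\tfrac{\bar z}{2}F$ and $\tfrac z2F$ cancel the derivative of the phase factor up to multiplication operators. The Euclidean derivatives of $V_gf^1$ are expressible as
\begin{align*}
\partial_x V_gf = -V_{g'}f\ \text{(up to phase terms)},\qquad \partial_\xi V_gf=-2\pi i\, V_{tg}f + (\text{terms in }x)\,V_gf.
\end{align*}
The expected outcome is that $\Dop_1F^1$, $\Dop_2F^1$ and their compositions are linear combinations of the twisted versions of $V_{\tilde g}f^1$ with $\tilde g\in\{g', tg, g'', t^2g, tg',\dots\}$. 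In other words, the unbounded polynomial factors cancel; this cancellation is exactly why twisted derivatives are used.

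Then, by the independence of $M^\infty$ norms on the window, $\|V_{\tilde g}f^1\|_\infty\lesssim_{g,\tilde g}\|V_gf^1\|_\infty$ for Schwartz $\tilde g$. This gives $A\lesssim\|V_gf^1\|_\infty$. The reverse inequality is trivial, since $A\ge\|F^1\|_\infty=\|V_gf^1\|_\infty$. Twice continuous differentiability of $F^1$ holds because $V_gf^1$ is smooth for any tempered distribution and Schwartz window.

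\textbf{Main obstacle.} I expect the main difficulty to be the exact phase and algebra in Step 3, that is, checking that the twisted derivatives produce no residual factors of $z$ or $\bar z$ after the rescaling by $\sqrt\pi$. It is cleanest to first note that $F^1_w$ corresponds to the short-time Fourier transform of a time-frequency shift of $f^1$. Combined with \eqref{eq_comm}, this means it suffices to check the bound at $z=0$, uniformly over time-frequency shifts of $f^1$; those shifts preserve $\|V_gf^1\|_\infty$. At $z=0$, the multiplier terms $\tfrac{\bar z}{2}$ and $\tfrac z2$ vanish, and only the plain Wirtinger derivatives of $V_gf$ at the origin remain. These are pairings of $f$ with derivatives and moments of $g$.
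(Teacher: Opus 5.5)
Your proposal is correct, and it proves directly what the paper only cites. The paper gives no argument of its own: it refers to \cite[Lemma 6.1]{hkr22} for the zero-mean case and \cite[Lemma 9.2]{fhkr} for the general case. It then asserts without detail that the first-order bounds proved there extend to $\Dop_j\Dop_k F^1$.

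Your route is self-contained:
\begin{itemize}
\item the covariance \eqref{eq_H} comes from pairing, or from a Hermite expansion, which also gives almost sure $C^2$ regularity and circular symmetry;
\item \eqref{A4} comes from the equality case of Cauchy--Schwarz;
\item \eqref{A1} comes from the fact that twisted derivatives turn a twisted STFT into another twisted STFT with a new Schwartz window, combined with window-independence of the $M^\infty$ norm, $\|V_{\tilde g}f^1\|_\infty\le \|V_{\tilde g}g\|_1\,\|V_gf^1\|_\infty$.
\end{itemize}
The main gain is that the second-order extension, which the paper only asserts, follows automatically by iterating the window map.

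The step you flag as the main obstacle closes by direct computation, with no need to reduce to $z=0$. Write $G_h(x+iy):=e^{-ixy}V_hf^1(x/\sqrt{\pi},-y/\sqrt{\pi})$, so that $F^1=G_g$. Use $\partial_a V_gf(a,b)=-V_{g'}f(a,b)$ and $\partial_b V_gf(a,b)=-2\pi i V_{tg}f(a,b)-2\pi i a V_gf(a,b)$. Then
\begin{align*}
\Dop_1 G_g = G_{\sqrt{\pi}\,tg-\frac{1}{2\sqrt{\pi}}g'},\qquad \Dop_2 G_g = G_{-\sqrt{\pi}\,tg-\frac{1}{2\sqrt{\pi}}g'}.
\end{align*}
The coefficients are real, so conjugate-linearity in the window causes no trouble. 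No factors of $z$ or $\bar z$ survive. As a check, $\Dop_2G_g\equiv 0$ for the Gaussian window, consistent with Lemma \ref{le_gef}.

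Your alternative reduction to $z=0$ via time-frequency shifts also works. However, at second order it is not only plain Wirtinger derivatives of $V_gf^1$ that remain. For example, $\Dop_1\Dop_2F^1(0)=\partial\bar\partial F^1(0)+\tfrac12F^1(0)$. The mixed derivative of $e^{-ixy}$ also contributes. These extra terms are bounded by $\|V_gf^1\|_\infty$, so the conclusion is unaffected.
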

\begin{proof}
This is proved in \cite[Lemma 6.1]{hkr22} for the zero mean case and in \cite[Lemma 9.2]{fhkr} in full generality. The references only provide bounds for the first order derivatives $\Dop_1 F^1, \Dop_2 F^1$ but the same argument applies to second order derivatives
$\Dop_j \Dop_k F^1$ and shows that
$A \asymp \|V_g f^1\|_\infty$.
\end{proof}

\subsection{Gaussian Weyl-Heisenberg functions}
For the remainder of the section, we let
$F= F^1+\sigma F^0$, with $\sigma>0$, be a random function satisfying the assumptions of Section \ref{sec_input_model}, namely: $F$ is almost surely twice continuously differentiable in the real sense, and
\eqref{A1}, \eqref{eq_H}, \eqref{A2}, \eqref{A3}, \eqref{A4} hold. This class of random functions was introduced in \cite{hkr22} and further studied in \cite{fhkr}. For short, we say that $F$ is a \emph{Gaussian Weyl-Heisenberg function} (GWHF) with \emph{twisted kernel} $H$, cf. \eqref{eq_H}.

We note the following symmetry of the twisted kernel $H$: setting $z=0$ or $w=0$ in \eqref{eq_H} we see that
\begin{align}\label{eq_Hsym}
H(-z)=\overline{H(z)}, \qquad z \in \mathbb{C}.
\end{align}
It is important to note the following uniformity property: for arbitrary $w \in \mathbb{C}$, the shifted function $F_w = F^1_w + \sigma F^0_w$ is another instance of the same stochastic model, with the same twisted kernel $H$, while the constant $A$ associated to $F^1_w$ is the same as that associated with $F^1$ in \eqref{A1}.

\subsection{Excursion bounds} As a first step, we look into the derivatives of the input function $F$.

\begin{lemma}\label{lemma_der}
For multi-indices $\alpha,\beta$ with $|\alpha|,|\beta|\leq 2$ we have\begin{align*}
\cov \big[ \partial^\alpha F^0(x + iy), \partial^\beta F^0(u+iv)\big]=
\partial^\alpha_{(x,y)} 
\partial^\beta_{(u,v)} \Big[
H(z-w) e^{i \Im(z \bar w)} \Big],\quad
z=x+iy,w=u+iv \in \mathbb{C}.
\end{align*}
\end{lemma}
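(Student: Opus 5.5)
The plan is to reduce the claim to the covariance identity \eqref{eq_H} and then justify interchanging derivatives with expectation.

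Write $K(z,w) := H(z-w)e^{i\Im(z\bar w)} = \E\big[F^0(z)\overline{F^0(w)}\big]$. By \eqref{A2}, $H\in C^6$, and the phase factor $e^{i\Im(z\bar w)}$ is real-analytic in $(x,y,u,v)$. Hence $K$ is $C^6$ jointly in the four real variables, so every mixed derivative $\partial^\alpha_{(x,y)}\partial^\beta_{(u,v)}K$ with $|\alpha|,|\beta|\le 2$ exists and is continuous. Since $F^0$ has mean zero and is almost surely $C^2$, the covariance on the left is $\E\big[\partial^\alpha F^0(z)\,\overline{\partial^\beta F^0(w)}\big]$, in the paper's convention for complex covariance. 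I will prove the identity by induction on $|\alpha|+|\beta|$, taking one first-order derivative at a time.

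\textbf{Inductive step.} Suppose the identity holds for $(\alpha,\beta)$, and add one $\partial_x$ to $\alpha$ (the other cases are identical). The derivative $\partial_x\partial^\alpha F^0(z)$ is the almost sure limit, as $h\to 0$, of the difference quotients
\[
D_h := \frac{\partial^\alpha F^0(z+h) - \partial^\alpha F^0(z)}{h}.
\]
\begin{itemize}
\item By the inductive hypothesis, $\E\big[D_h\,\overline{\partial^\beta F^0(w)}\big]$ equals the corresponding difference quotient of $\partial^\alpha_{(x,y)}\partial^\beta_{(u,v)}K$.
\item That difference quotient converges to the claimed derivative because $K$ is $C^6$.
\end{itemize}
So the only remaining issue is to pass the limit inside the expectation.

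\textbf{Passing the limit (the main obstacle).} I handle this with Gaussianity and $L^2$ theory.
\begin{itemize}
\item The variables $D_h$ and $\partial^\beta F^0(w)$ are jointly Gaussian, being almost sure limits of linear combinations of values of $F^0$.
\item By the inductive hypothesis applied with $\beta=\alpha$, $\E|D_h - D_{h'}|^2$ is a second difference quotient of the $C^6$ function $\partial^\alpha_{(x,y)}\partial^\alpha_{(u,v)}K$. It therefore tends to $0$ as $h,h'\to 0$, so $D_h$ is Cauchy in $L^2$.
\item The $L^2$ limit coincides with the almost sure limit $\partial_x\partial^\alpha F^0(z)$.
\item By Cauchy--Schwarz, expectations of products converge, which closes the induction.
\end{itemize}

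Alternatively, one can cite the standard fact (\cite[Section 1.4]{adler}, \cite[Chapter 1]{level}) that for a Gaussian field with $C^{2k}$ covariance and $C^k$ paths, the derivative fields are Gaussian with covariance given by the corresponding mixed derivatives of $K$. This standard fact is probably what the paper does. The only care needed is the sesquilinear convention: derivatives in $w$ act on $\overline{F^0(w)}$, and since $\partial_u,\partial_v$ are real derivatives they commute with complex conjugation, so no extra conjugation arises.
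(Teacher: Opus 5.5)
Your argument is correct and is essentially the paper's proof. The paper uses that $F$ is a.s.\ $C^2$ (hence separable) and that $H\in C^6$ to cite the standard interchange of derivatives and expectation from \cite{level} and \cite{adler}. Your mean-square Cauchy argument (difference quotients converge in $L^2$ by the covariance computation, and the $L^2$ limit agrees a.s.\ with the pathwise derivative) is precisely the content of that cited fact.

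One ordering detail needs fixing. Induction on $|\alpha|+|\beta|$ does not supply the pair $(\alpha,\alpha)$ when $|\alpha|>|\beta|$. To reach $|\alpha'|=2$, $\beta=0$ you need the case $|\alpha|=|\beta|=1$, which sits at the same level $2$. Within level $2$, first treat the mixed pairs $|\alpha|=|\beta|=1$; these only require the level-$0$ and level-$1$ cases.
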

\begin{proof}
Since $F$ is almost surely $C^2$, it is a separable Gaussian field. This, together with the fact that $H \in C^6$ allows one to commute expectation and derivatives up to order 2; see \cite[Chapter 1]{level} or \cite[Section 1.1]{adler}.
\end{proof}
Next, we investigate the typical size of the derivatives of the input function $F$.
\begin{lemma} \label{lem:excu}
There exist constants $c,c',C>0$ that depend on the twisted kernel $H$ such that for every multi-index $|\alpha| \leq 2$, the following excursion bound holds
\begin{align}\label{eq_le2}
  \mathbb{P} \bigg( \sup_{|z|\leq 10, |w| \leq L } |\partial^\alpha F_w(z)|>s \bigg) \leq C L^2 
  e^{c'\tfrac{A^2}{\sigma^2}}
  e^{-c\tfrac{s^2}{\sigma^2}}, \qquad s \geq 0, \qquad L \geq 1.
\end{align}
\end{lemma}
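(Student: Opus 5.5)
We split $\partial^\alpha F_w = \partial^\alpha F^1_w + \sigma\, \partial^\alpha F^0_w$, bound the deterministic part by $A$, and bound the random part by a Gaussian concentration (Borell--TIS) argument combined with a covering of the parameter range $|w|\le L$ by $O(L^2)$ unit cells. The $L^2$ prefactor will come from a union bound over the cells, and the factor $e^{c'A^2/\sigma^2}$ is a device for absorbing the deterministic shift.

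\textbf{Deterministic part.} Since $F^1_w$ and $F^1$ share the constant $A$ in \eqref{A1}, and the twisted derivatives commute with twisted shifts \eqref{eq_comm}, it suffices to control Euclidean derivatives $\partial^\alpha F^1_w(z)$ for $|z|\le 10$ in terms of $\Dop_j F^1_w$ and $\Dop_j\Dop_k F^1_w$ at $z$. Writing $\partial_x = \partial+\bar\partial$ and $\partial_y = i(\partial-\bar\partial)$, each $\partial^\alpha$ with $|\alpha|\le2$ is a combination of $\Dop$-monomials of order at most $|\alpha|$, with polynomial coefficients in $z,\bar z$ of degree at most $2$. These coefficients are bounded for $|z|\le 10$, so $|\partial^\alpha F^1_w(z)|\le C_0 A$ uniformly in $w$.

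\textbf{Random part on a fixed cell.} Fix $w_0$ and consider the Gaussian field
\[
G_{w_0}(z,u)=\partial^\alpha F^0_{w_0+u}(z), \qquad |z|\le 10,\ |u|_\infty\le 1.
\]
By twisted stationarity, its law does not depend on $w_0$.
\begin{itemize}
\item \emph{Variance bound.} Lemma \ref{lemma_der} together with $H\in C^6$ gives $\sup \E|G|^2\le C_1$. One has to expand $\partial^\alpha_u$ acting on $e^{-i\Im(z\bar u)}F^0(z+w_0+u)$, which is legitimate since the twist factor is smooth.
\item \emph{Bounded expected supremum.} Applying Lemma \ref{lemma_der} with derivatives up to order 3 would give a Lipschitz bound on the canonical metric, and then Dudley's entropy bound gives $\E\sup|G|\le C_2$. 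Since $H\in C^6$ but Lemma \ref{lemma_der} is stated only up to order 2, I would first try Hölder continuity of the covariance of order-2 derivatives, which follows from $H\in C^5$. This still yields a finite Dudley integral on a compact 4-dimensional parameter set.
\item \emph{Concentration.} Borell--TIS, applied to the real and imaginary parts, gives
\[
\mathbb{P}\big(\sup|G|>C_2+t\big)\le 2e^{-t^2/(2C_1)}.
\]
\end{itemize}

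\textbf{Union bound and constants.} Cover $\{|w|\le L\}$ by $\lesssim L^2$ cells $w_0+[-1,1]^2$. If $\sup|\partial^\alpha F_w|>s$, then $\sigma\sup|G|>s-C_0A$. For $s\ge 2C_0A+2C_2\sigma$ the tail is at most $e^{-cs^2/\sigma^2}$. For smaller $s$, the target right-hand side $CL^2e^{c'A^2/\sigma^2}e^{-cs^2/\sigma^2}$ is at least $1$ once $c'$ and $C$ are chosen large, because $s^2/\sigma^2\lesssim A^2/\sigma^2+1$. In the large-$s$ regime, using $(s-C_0A)^2\ge s^2/2-C_0^2A^2$ produces exactly the factor $e^{c'A^2/\sigma^2}$.

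\textbf{Main obstacle.} I expect the main difficulty to be the uniform bound on $\E\sup$, that is, making the covariance regularity of the $u$-dependent twisted family rigorous with only $C^6$ regularity of $H$. The remaining steps are routine.
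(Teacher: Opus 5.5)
Your proposal is correct, and its skeleton matches the paper's: the deterministic part is bounded by $A$, the Gaussian part by Dudley plus Borell--TIS, a union bound runs over $O(L^2)$ unit cells, and $A$ is absorbed into $e^{c'A^2/\sigma^2}$ for small $s$.

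\textbf{Where the routes differ.} The difference is how the shift parameter inside a cell is handled. You keep it as an extra variable and run Dudley on a field indexed by $(z,u)$ in a four-dimensional set. The paper instead uses that twisted derivatives commute with twisted shifts. The norm $\|G\|_*$, built from $G,\Dop_jG,\Dop_j\Dop_kG$ on $|z|\le 10$, is equivalent with absolute constants to the Euclidean $C^2$ norm there. It also satisfies $\sup_{|w|\le 1}\|G_w\|_*=\|G\|_{**}$, the same quantity on $|z|\le 11$. So the shift parameter is absorbed into a slightly larger disk, and Borell--TIS is needed only for the planar field $\partial^\alpha F^0$ on $|z|<11$. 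Uniformity over cells is immediate because each $F_\omega$ is again an instance of the model with the same $A$, and $\|F^1\|_{**}\le A$ comes for free. Your route avoids the shift-invariant norm, but pays by computing covariance regularity for a four-parameter field that carries the twist factor.

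\textbf{Your ``main obstacle'' is not one.} The paper uses exactly your fallback. It bounds $\var[G(\xi)-G(\chi)]\lesssim|\xi-\chi|$ via Lipschitz continuity of the covariance of second derivatives, which needs five derivatives of $H$. A canonical metric $\lesssim|\xi-\chi|^{1/2}$ gives a finite Dudley integral in any fixed dimension, including your four-dimensional index set.

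\textbf{One claim needs correcting.} The law of $(z,u)\mapsto \partial^\alpha F^0_{w_0+u}(z)$ does depend on $w_0$. Indeed $F^0_{w_0+u}(z)=e^{i\Im(u\bar w_0)}e^{-i\Im(z\bar u)}F^0_{w_0}(z+u)$, so the covariance picks up the factor $e^{i\Im((u-u')\bar w_0)}$. Only the modulus has a $w_0$-independent law. If you ran Dudley on $\Re G_{w_0}$ as written, the Lipschitz constant in $u$ would be of order $|w_0|$, which can be as large as $L$, and uniformity over cells would be lost. The fix is one line. Remove the unimodular, $z$-independent factor $e^{i\Im(u\bar w_0)}$ before applying Dudley and Borell--TIS; this does not change $\sup|G|$. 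Then use that $F^0_{w_0}$ has the same law as $F^0$.
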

\begin{proof}
\noindent {\bf Step 1}. Fix a multi-index $\alpha$ with $|\alpha|\leq 2$. We shall apply the Borell–TIS inequality (see e.g., \cite[Theorem 2.9]{level} or \cite[Theorem 2.1.1]{adler}) to the real-valued Gaussian random fields
\[G_1(z):= \mathrm{Re}\big[\partial^\alpha F^0(z)\big], \quad G_2(z):= \mathrm{Im}\big[\partial^\alpha F^0(z)\big]\]
on the domain $\{z: |z| < 11 \}$.

Let us denote $K_j(\xi,\chi) = \mathbb{E} [ G_j(\xi) \overline{G_j(\chi)}]$. By Lemma \ref{lemma_der}, for $|\xi|,|\chi| \leq 11$,
\begin{align*}
\var[G_j(\xi) -  G_j (\chi)] = 
K_j(\xi,\xi) + K_j(\chi,\chi) - 2 \Re [K_j(\xi,\chi)] \lesssim \sup_{|\beta| \leq 5, |z| \leq 22} |\partial^\beta H(z)| \cdot |\xi-\chi|.
\end{align*}
This estimate allows us to apply Dudley's inequality because it compares the metric induced by $G_j$ to the usual Euclidean metric, and provides bounds for the corresponding entropy numbers \cite[Chapter 2]{level}, \cite[Chapter 2]{adler}. By Dudley's inequality \cite[Theorem 2.10]{level} \cite[Theorem 1.3.3]{adler}, we have the estimate
\begin{align*}
B_j &:= \E \big[\sup_{|z|\leq 11} G_j(z)\big] \leq C, \qquad j=1,2,
\end{align*}
where the constant $C$ depends only on $H$. Similarly, by Lemma \ref{lemma_der},
\begin{align*}
\sigma_{j}^2 &:= \sup_{|z|\leq 11}\var G_j(z) \leq C, \quad j=1,2,
\end{align*}
with the constant $C$ depending only on $H$. By the Borell-TIS inequality (see, e.g., \cite[Theorems 2.8 and 2.9]{level}), 
\begin{align} \label{eq:excub}
 \mathbb{P} \bigg( \sup_{|z|\leq 11}  G_j(z)>s \bigg) = 
  \mathbb{P} \bigg( \sup_{|z|\leq 11}  G_j(z)-B_j>s-B_j \bigg)
 \leq 2  e^{-\frac12(s - B_j)^2/\sigma_j^2}.
\end{align}
Here, we may assume that $\sigma_{j}$ is not zero, because otherwise $G_j(z)$ would be almost surely $0$
for all $|z|\leq 11$, in which case the excursion bound \eqref{eq:excub} holds trivially. Arguing similarly with $-G_j$ and collecting all estimates, we obtain \begin{align}\label{eq_le2a}
  \mathbb{P} \bigg( \sup_{|z|<11} |\partial^\alpha F^0(z)|>s \bigg) \leq C_1 e^{-c_1 s^2}, \qquad s \geq 0.
\end{align}
Here and for the remainder of the proof $C_j, c_j$ will denote constants that depend only on the twisted kernel $H$.

\smallskip

\noindent {\bf Step 2}. We now compare Euclidean and twisted derivatives. For an arbitrary $G: \mathbb{C} \to \mathbb{C}$ that is $C^2$ in the real sense we define
\begin{align*}
\|G\|_* := \sup_{|z|\leq 10}
\max\big\{|G(z)|, |\Dop_j G(z)|, |\Dop_j \Dop_i G(z)|: i,j=1,2\big\},
\\
\|G\|_{**} := \sup_{|z|\leq 11}
\max\big\{|G(z)|, |\Dop_j G(z)|, |\Dop_j \Dop_i G(z)|: i,j=1,2\big\}.
\end{align*}
Then
\begin{align}\label{eq_then}
\|G\|_* \asymp \sup_{|\alpha|\leq 2}\sup_{|z| \leq 10} |\partial^\alpha G(z)|,
\end{align}
where the implied constants are absolute. 
The advantage of the introduced expressions is that they are naturally compatible with twisted shifts. For example, by the commutation relation
\eqref{eq_comm},
\begin{align*}
\|(G_{w_1})_{w_2}\|_*&=
\sup_{|z|\leq 10}
\max_{i,j=1,2}\big\{|(G_{w_1})_{w_2}(z)|, |((\Dop_j G)_{w_1})_{w_2}(z)|, |((\Dop_j \Dop_i G)_{w_1})_{w_2}(z)|\big\}
\\
&=
\sup_{|z|\leq 10}
\max_{i,j=1,2}\big\{|G(z+w_1+w_2)|, |\Dop_j G(z+w_1+w_2)|, |\Dop_j \Dop_i G(z+w_1+w_2)|\big\}
\\
&=\|G_{w_1+w_2}\|_*.
\end{align*}
Similarly,
\begin{align}\label{eq_r}
\begin{aligned}
\sup_{|w|\leq 1} \|G_w\|_* =\sup_{|w|\leq 1}\sup_{|z|\leq 10}
\max_{i,j=1,2}\big\{|G(z+w)|, |\Dop_j G(z+w)|, |\Dop_j \Dop_i G(z+w)|\big\}=\|G\|_{**}.
\end{aligned}
\end{align}
As a consequence of \eqref{eq_then}, the excursion bounds \eqref{eq_le2a} can be summarized as
\begin{align}\label{eq_le2b}
  \mathbb{P} \big( \|F^0\|_{**}>s \big) \leq C_2 e^{-c_2 s^2}, \qquad s \geq 0,
\end{align}
while \eqref{A1} and \eqref{eq_comm} imply that $\|F^1\|_{**} \leq A$.
Thus, 
\begin{align*}
\|F\|_{**} \leq A + \sigma \|F^0\|_{**},
\end{align*}
and, therefore, for $s \geq A$,
\begin{align}\label{eq_Fa}
\mathbb{P} \big( \|F\|_{**} >s\big) 
\leq 
\mathbb{P} \big( \|F^0\|_{**} >(s-A)/\sigma\big) 
\leq C_2 e^{-c_2\tfrac{(s-A)^2}{\sigma^2}}.
\end{align}
\smallskip
\noindent {\bf Step 3}.
Finally, we cover the disk $\overline{D}_L$ with $O(L^2)$ translates of the unit disk, which we denote $\overline{D}_1+\omega^L_{j,k}$. We now make the crucial observation that the functions $F_{\omega^L_{j,k}}$ satisfy the same assumptions as $F$, with uniform estimates. Hence, we can apply \eqref{eq_Fa} to each $F_{\omega^L_{j,k}}$ in lieu of $F$, and with help from
\eqref{eq_then} and \eqref{eq_r} bound
\begin{align*}
&\mathbb{P} \bigg( \sup_{|z|\leq 10, |w| \leq L } |\partial^\alpha F_w(z)|>s \bigg)
\leq \sum_{j,k}
\mathbb{P} \bigg( \sup_{|z|\leq 10, |w| \leq 1} |\partial^\alpha F_{w+\omega^L_{j,k}}(z)|>s \bigg)
\\
&\quad\leq \sum_{j,k}
\mathbb{P} \bigg( \sup_{|w| \leq 1} \|F_{w+\omega^L_{j,k}}\|_* > c_3 s\bigg)
= \sum_{j,k}
\mathbb{P} \bigg( \sup_{|w| \leq 1} \|(F_{\omega^L_{j,k}})_w\|_* > c_3 s\bigg)
\\
&\quad= \sum_{j,k}
\mathbb{P} \Big( \| F_{\omega^L_{j,k}} \|_{**}>c_3 s \Big),
\end{align*}
for a constant $c_3$. We note that \eqref{eq_le2} holds trivially for $0 \leq s < A/c_3$, provided that we choose, as we may, $c'>c/c_3^2$. We assume that $c_3 s \geq A$ and use \eqref{eq_Fa} to get
\begin{align*}
&\mathbb{P} \bigg( \sup_{|z|\leq 10, |w| \leq L } |\partial^\alpha F_w(z)|>s \bigg)
\leq C_5 L^2 \exp \big[-c_2\tfrac{(c_3 s-A)^2}{\sigma^2} \big]
\leq C_5 L^2 \exp \big[-\frac{c_2}{2}\tfrac{c_3^2 s^2-2A^2}{\sigma^2} \big],
\end{align*}
where we used that
$c_3^2s^2=\big((c_3s-A)+A \big)^2 \leq2 \big((c_3s-A)^2 + A^2\big)$.
\end{proof}

\subsection{The argument principle}
We note the following sufficient condition for the accurate calculation of the variation of the complex argument along a curve.
\begin{lemma}[Unproblematic calculation of argument change]\label{lemma_arg}
Let $\tau \in \C \setminus \{0\}$ and $\alpha\colon [a,b] \to \C$ continuously differentiable. Suppose that $\Re\big[\overline{\tau} \alpha(t)\big]>0$, for all $t \in [a,b]$. Then \[\Im\,\int_\alpha \frac{dz}{z} = \arg\big[\alpha(b)\overline{\alpha(a)} \big].\]
\end{lemma}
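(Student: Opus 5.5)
The idea is to trap the curve $\alpha$ in the open half-plane $P_\tau := \{z \in \C : \Re[\overline{\tau} z] > 0\}$. On $P_\tau$ the logarithm has a single-valued continuous branch, and that branch directly computes both sides of the claimed identity. Set $\phi := \arg \tau$.

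We use the branch
\[
\log_\tau(z) := \log|z| + i\big(\phi + \arg(z\overline{\tau}/|\tau|)\big), \qquad z \in P_\tau .
\]
For $z \in P_\tau$ we have $\Re[z\overline{\tau}]>0$, so $\arg(z\overline{\tau})$ lies in $(-\pi/2,\pi/2)$. There the principal argument is smooth, so $\log_\tau$ is holomorphic on $P_\tau$ with derivative $1/z$. Since $\alpha$ is $C^1$ and takes values in $P_\tau$ (in particular $\alpha$ never vanishes), the fundamental theorem of calculus along $\alpha$ gives
\[
\int_\alpha \frac{dz}{z} = \int_a^b \frac{\alpha'(t)}{\alpha(t)}\,dt = \log_\tau(\alpha(b)) - \log_\tau(\alpha(a)).
\]
Taking imaginary parts,
\[
\Im\int_\alpha \frac{dz}{z} = \arg\big(\alpha(b)\overline{\tau}\big) - \arg\big(\alpha(a)\overline{\tau}\big),
\]
and this difference lies in $(-\pi,\pi)$.

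It remains to identify this difference with $\arg[\alpha(b)\overline{\alpha(a)}]$. Write $\alpha(b)\overline{\tau} = r_b e^{i\psi_b}$ and $\alpha(a)\overline{\tau} = r_a e^{i\psi_a}$ with $\psi_a,\psi_b \in (-\pi/2,\pi/2)$. Then
\[
\alpha(b)\overline{\alpha(a)}\,|\tau|^2 = \alpha(b)\overline{\tau}\cdot\overline{\alpha(a)\overline{\tau}} = r_a r_b\, e^{i(\psi_b-\psi_a)} .
\]
Because $\psi_b - \psi_a \in (-\pi,\pi)$, it is exactly the principal argument of $\alpha(b)\overline{\alpha(a)}$, which gives the claim.

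There is no real obstacle here. The only subtle point is the branch bookkeeping: the half-plane condition is what keeps the argument difference strictly inside $(-\pi,\pi)$, so that no $2\pi$ correction is needed and the convention $\arg(0)=0$ is never invoked.
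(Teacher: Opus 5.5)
Your proposal is correct and is essentially the paper's argument. The paper normalizes $|\tau|=1$, replaces $\alpha$ by the rotated curve $\overline{\tau}\alpha$ (which leaves $dz/z$ unchanged), and applies the principal logarithm on the right half-plane, which is your branch $\log_\tau$ up to the additive constant $i\phi$; the final identification of $\arg(\overline{\tau}\alpha(b))-\arg(\overline{\tau}\alpha(a))\in(-\pi,\pi)$ with $\arg[\alpha(b)\overline{\alpha(a)}]$ is done the same way.
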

\begin{proof}
We assume without loss of generality that $|\tau|=1$, let $\log$ be the principal branch of the logarithm, and compute
\begin{align*}
\Im\,\int_\alpha \frac{dz}{z} &=
\Im\,\int_{\overline{\tau} \alpha} \frac{dz}{z}
= \Im \big[\log(\overline{\tau} \alpha(b)) - 
\log(\overline{\tau} \alpha(a)) \big]
\\
&=\arg(\overline{\tau} \alpha(b)) - 
\arg(\overline{\tau} \alpha(a)).
\end{align*}
Since $\arg(\overline{\tau} \alpha(b)),\arg(\overline{\tau} \alpha(a)) \in (-\pi/2,\pi/2)$, we have $\arg(\overline{\tau} \alpha(b)) - 
\arg(\overline{\tau} \alpha(a)) \in (-\pi,\pi)$ and, therefore,
\begin{equation*}
\arg(\overline{\tau} \alpha(b)) - \arg(\overline{\tau} \alpha(a))
= \arg\big[\overline{\tau} \alpha(b) \overline{\overline{\tau} \alpha(a)}\big]=\arg\big[\alpha(b)\overline{\alpha(a)} \big].\qedhere
\end{equation*}
\end{proof}

\subsection{Singularity of the covariance matrix}\label{sec_scov}
In order to perform smoothed analysis of \calgo\, we will study correlations between $F$ and its derivatives at various points. As a first step, we consider the zero-mean random vector
\begin{align}\label{eq_vector}
\big(F^0(0), \partial F^0(0), \bar{\partial} F^0(0) \big).
\end{align}
The following lemma shows that the components of \eqref{eq_vector} may exhibit deterministic relations.
\begin{lemma}\label{le_gef}
Suppose that $H(z)=e^{-|z|^2/2}$, $z \in \mathbb{C}$, and define $G^0(z) = e^{|z|^2/2} F^0(z)$. Then $G^0$ is almost surely an entire function, and, as a consequence, almost surely,
\begin{align}\label{eq_cc1}
\bar{\partial} F^0(z) + \tfrac{z}{2} F^0(z) = 0, \qquad z \in \mathbb{C}.
\end{align}
In addition $F^0(0)$ and $\partial F^0(0)$ are independent standard complex normal variables. 
\end{lemma}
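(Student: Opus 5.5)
The plan is to compute the covariance structure of $G^0(z)=e^{|z|^2/2}F^0(z)$ and recognize it as the Gaussian entire function. By \eqref{eq_H} with $H(z)=e^{-|z|^2/2}$,
\begin{align*}
\E\big[G^0(z)\overline{G^0(w)}\big]=e^{|z|^2/2+|w|^2/2}e^{-|z-w|^2/2}e^{i\Im(z\bar w)}
=e^{\Re(z\bar w)+i\Im(z\bar w)}=e^{z\bar w},
\end{align*}
since $|z-w|^2=|z|^2+|w|^2-2\Re(z\bar w)$. So $G^0$ is a circularly symmetric Gaussian field with covariance kernel $e^{z\bar w}$, the Fock space reproducing kernel.

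To show analyticity, I will compare $G^0$ with the series $\tilde G(z)=\sum_{n\ge0}\xi_n z^n/\sqrt{n!}$, where the $\xi_n$ are i.i.d.\ standard complex normal. This series almost surely converges locally uniformly, because $|\xi_n|=O(\sqrt{\log n})$ almost surely, so $\tilde G$ is almost surely entire, and it has the same covariance $e^{z\bar w}$. Hence $G^0$ and $\tilde G$ have the same finite-dimensional distributions.

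Upgrading this to almost sure analyticity of $G^0$ itself uses that $F$ is almost surely $C^2$. It suffices to show $\bar\partial G^0\equiv0$ almost surely. By Lemma \ref{lemma_der}, $\bar\partial G^0(z)$ is a centered Gaussian variable whose variance is obtained by differentiating the kernel. Concretely, $\E\big[|\bar\partial G^0(z)|^2\big]=\partial_w\bar\partial_z$ applied to $K(z,w)=e^{z\bar w}$ and evaluated on the diagonal; this is $\overline{\bar\partial_w}\,\bar\partial_z K$ with appropriate conjugations. Since $K$ is holomorphic in $z$, this variance vanishes. Thus $\bar\partial G^0(z)=0$ almost surely for each fixed $z$. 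By continuity of $\bar\partial G^0$, it vanishes on a countable dense set and hence everywhere, so $G^0$ is entire.

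Identity \eqref{eq_cc1} then follows from
\[
0=\bar\partial\big(e^{|z|^2/2}F^0\big)=e^{|z|^2/2}\big(\bar\partial F^0+\tfrac z2F^0\big),
\]
using $\bar\partial|z|^2=z$.

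For the last claim, $F^0(0)=G^0(0)$ and $\partial F^0(0)=\partial G^0(0)-\tfrac{\bar z}{2}F^0\big|_{0}=(G^0)'(0)$. The covariances come from the kernel:
\begin{itemize}
\item $\E|G^0(0)|^2=K(0,0)=1$;
\item $\E|(G^0)'(0)|^2=\partial_z\overline{\partial_w}e^{z\bar w}\big|_0=1$;
\item $\E\big[G^0(0)\overline{(G^0)'(0)}\big]=\partial_{\bar w}e^{z\bar w}\big|_0=z\big|_0=0$.
\end{itemize}
The pair is jointly circularly symmetric Gaussian, since it is obtained linearly from $F^0$. Uncorrelated jointly circular Gaussians are independent, so $F^0(0)$ and $\partial F^0(0)$ are independent standard complex normal variables.

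The main obstacle is making the differentiation of covariances rigorous and justifying the passage from a fixed $z$ to all $z$. Lemma \ref{lemma_der} and the almost sure $C^2$ regularity handle both, so this step should be routine.
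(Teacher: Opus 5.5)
Your proof is correct and follows essentially the same route as the paper. Both compute the kernel $e^{z\bar w}$ of $G^0$, use Lemma \ref{lemma_der} to show that $\bar\partial G^0$ has vanishing covariance and hence vanishes almost surely, deduce \eqref{eq_cc1} by the product rule, and read off the identity covariance of $(G^0(0),\partial G^0(0))$ from derivatives of the kernel. You justify the vanishing of $\bar\partial G^0$ at all points slightly more carefully, via a countable dense set and continuity. The comparison with the random Taylor series $\tilde G$ is never used in your final argument and can simply be dropped.
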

\begin{proof}
We first compute the covariance kernel of the random Gaussian function $G^0$:
\begin{align*}
\mathbb{E} \big[ \,G^0(z) \cdot \overline{G^0(w)}\, \big] &= e^{|z|^2/2} e^{|w|^2/2} H(z-w) e^{i \Im(z\bar{w})}
\\
& = e^{|z|^2/2} e^{|w|^2/2} e^{-|z-w|^2/2} e^{i \Im(z\bar{w})} = e^{z \bar{w}}.
\end{align*}
By Lemma \ref{lemma_der}, we can then interchange expectation and differentiation and compute the covariance kernel of $\bar{\partial} G^0$:
\begin{align*}
\mathbb{E} \big[ \,\bar{\partial} G^0(z) \cdot\overline{\bar{\partial} G^0(w)} \,\big] 
=\mathbb{E} \big[ \,\bar{\partial} G^0(z) \cdot \partial\overline{G^0(w)} \,\big] 
= \partial_w \bar\partial_z e^{z\bar{w}} = 0.
\end{align*}
Thus $\bar \partial G^0$ is almost surely the zero function, $G^0$ is almost surely entire, and 
\begin{align*}
\bar\partial F^0(z) = \bar\partial \big[ e^{-z \bar{z}/2} G^0(z) \big] = e^{-|z|^2/2} \left( \bar \partial G^0(z) - \tfrac{z}{2} G^0(z) \right) = - \tfrac{z}{2} F^0(z), \qquad z \in \mathbb{C},
\end{align*}
which proves \eqref{eq_cc1}. (Recall that we assume that $F$ is almost surely continuous.)

Similarly,
\begin{align*}
\mathbb{E} \big[ \,G^0(z) \cdot \overline{\partial G^0(w)}\, \big] &= \bar\partial_w e^{z \bar{w}} = z e^{z \bar{w}},
\end{align*}
and
\begin{align*}
\mathbb{E} \big[ \,\partial G^0(z) \cdot \overline{\partial G^0(w)}\, \big] &=
\partial_z
\bar\partial_w e^{z \bar{w}} = e^{z \bar{w}} + z\bar{w} e^{z \bar{w}}.
\end{align*}
This shows that the covariance matrix of the Gaussian vector $(F^0(0),\partial F^0(0))=(G^0(0),\partial G^0(0))$ is the identity, and gives the remaining claim.
\end{proof}

The function $G^0$ in Lemma \ref{le_gef} is known as a \emph{Gaussian entire function (GEF)} and can also be described as a random Taylor series with adequately normalized independent complex normal coefficients \cite{NSwhat, gafbook}. Motivated by this identification, we say that $F$ is a GWHF with twisted kernel $H(z)=H^{GEF}(z)=e^{-|z|^2/2}$.

In the situation of Lemma \ref{le_gef}, the vector \eqref{eq_vector} has a singular covariance, as its third component vanishes. The next proposition shows that up to a change of variables, this is the only case in which the components of \eqref{eq_vector} are deterministically related. 

\begin{prop} \label{prop_singular}
Suppose there exist constants $a, b, c \in \C$,
not all zero, such that, almost surely,
\begin{equation}\label{eq:linrel}
aF^0(0)= b\partial F^0(0) + c \bar \partial F^0(0).
\end{equation}
Then $|c|^2-|b|^2 > 0 $ and 
\begin{align}\label{eq_con}
F^0(z)= e^{[-z(\bar a c+ a \bar b)+\bar z (\bar a b + a \bar c) ]/(|c|^2-|b|^2)} \cdot F^{GEF}(R(z)),
\end{align}
where 
\begin{align}\label{eq_R}
    R(z)= {(|c|^2-|b|^2)^{-1/2}}(cz- b \bar z)
\end{align}
and $F^{GEF}$ is a zero-mean GWHF with twisted kernel $H^{GEF}(z):=e^{-|z|^2/2}$.
\end{prop}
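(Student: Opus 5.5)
The plan is to turn the pointwise relation \eqref{eq:linrel} into a first-order linear PDE valid on the whole plane, solve it by a real-linear change of variables, and read off \eqref{eq_con} together with the sign condition $|c|^2>|b|^2$.

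\textbf{Step 1 (propagating the relation).} By twisted stationarity, for every $w\in\C$ the shifted field $F^0_w$ has the same law as $F^0$, so \eqref{eq:linrel} holds almost surely with $F^0_w$ in place of $F^0$. Differentiating $F_w(z)=F(z+w)e^{-(z\bar w-\bar z w)/2}$ at $z=0$ gives
\[
\partial F_w(0)=\Dop_1F(w),\qquad \bar\partial F_w(0)=\Dop_2 F(w),
\]
which is consistent with \eqref{eq_comm}. Hence, almost surely, $aF^0(w)=b\Dop_1F^0(w)+c\Dop_2F^0(w)$ for all $w$ in a countable dense set. Since $F^0$ is $C^2$, the relation then holds for all $w\in\C$ simultaneously. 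Writing it out gives the PDE
\begin{align}
b\,\partial F^0+c\,\bar\partial F^0=\big(a+\tfrac{b\bar z}{2}-\tfrac{cz}{2}\big)F^0 \quad\text{on } \C.
\end{align}
Note that $b,c$ cannot both vanish, since otherwise $aF^0(0)=0$ with $a\neq0$, contradicting $H(0)=1$.

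\textbf{Step 2 (change of variables, nondegenerate case).} Suppose first that $|c|^2\neq|b|^2$, and set $\kappa^2=|c|^2-|b|^2$. In the case $\kappa^2>0$, put $\xi=R(z)$ as in \eqref{eq_R}. A direct computation gives
\[
(b\partial+c\bar\partial)\xi=0,\qquad (b\partial+c\bar\partial)\bar\xi=\kappa,
\]
so the operator $b\partial+c\bar\partial$ equals $\kappa\,\partial_{\bar\xi}$. Next, choose the real-quadratic-plus-linear exponent $\phi(z)=[-z(\bar ac+a\bar b)+\bar z(\bar ab+a\bar c)]/\kappa^2$ so that $(b\partial+c\bar\partial)\phi=a$. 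The remaining quadratic coefficient is absorbed by a Gaussian weight $e^{-|\xi|^2/2}$: using $|z|^2$-type identities, $\kappa\partial_{\bar\xi}(-|\xi|^2/2)$ should reproduce $\tfrac{b\bar z}{2}-\tfrac{cz}{2}$. It follows that
\[
G(\xi):=e^{|\xi|^2/2}e^{-\phi(z)}F^0(z)
\]
satisfies $\partial_{\bar\xi}G=0$, i.e.\ $G$ is almost surely entire. Then $F^{GEF}(\xi):=e^{-|\xi|^2/2}G(\xi)$ satisfies $F^0(z)=e^{\phi(z)}F^{GEF}(R(z))$, which is \eqref{eq_con}.

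It remains to identify the law of $F^{GEF}$. Compute its covariance from \eqref{eq_H} via the inverse of $R$. The covariance must have the twisted-stationary form $H'(\xi-\eta)e^{i\Im(\xi\bar\eta)}$, and the Cauchy--Riemann constraint (as in the proof of Lemma \ref{le_gef}, run backwards) forces $e^{(|\xi|^2+|\eta|^2)/2}H'(\xi-\eta)e^{i\Im \xi\bar\eta}$ to be a holomorphic-antiholomorphic kernel $K(\xi,\bar\eta)$ that depends only on $\xi-\eta$ up to the twist. This should pin down $K=e^{\xi\bar\eta}$, using the normalization $H(0)=1$, which in turn pins down the constants in $\phi$. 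This is where I expect the bookkeeping, in particular verifying that the exponent $\phi$ is exactly the one in \eqref{eq_con}, to be the heaviest computation.

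\textbf{Step 3 (excluding $|c|^2\le|b|^2$).} This is the main obstacle.

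If $|b|=|c|$, then $b\partial+c\bar\partial$ is, up to a unimodular factor, a real directional derivative $\partial_v$. The PDE then says that along each line in direction $v$, $F^0$ is determined by its value at one point times a deterministic factor. Consequently $F^0(0)$ and $F^0(tv)$ are deterministically related, so $|H(tv)|=1$ for $t\neq0$, contradicting \eqref{A4}.

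If $|b|>|c|$, the same change of variables with the roles of $b,c$ swapped gives $\partial_\xi$ in place of $\partial_{\bar\xi}$, and the sign of the Gaussian weight flips. One then finds that $F^0$ equals an antiholomorphic function times $e^{+|\xi|^2/2}$ (times a unimodular/affine factor). Its covariance would then have to be $e^{\pm}$-growing, with a positive-definite kernel $K(\bar\xi,\eta)$ that makes $\var F^0(z)$ non-constant. This contradicts $\var F^0\equiv H(0)=1$. I would verify this by evaluating the PDE's consequence $\E|(b\Dop_1+c\Dop_2)F^0(0)|^2=|a|^2$ with the commutator identity $[\Dop_1,\Dop_2]=I$ and the adjoint relations induced by stationarity. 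Concretely, I expect $\E|\Dop_2F|^2-\E|\Dop_1F|^2$ to equal a fixed sign constant, and this yields $|c|^2-|b|^2>0$ directly.
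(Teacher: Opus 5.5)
Your proposal is correct in outline, but it takes a genuinely different route from the paper. The paper never solves an equation for the random field. After propagating the relation to every $z$ (as in your Step 1), it multiplies by $\overline{F^0(0)}$ and takes expectations, which gives the deterministic equation $aH=b\Dop_1H+c\Dop_2H$ for the twisted kernel. Using $H(-z)=\overline{H(z)}$ to get a companion equation for $\bar\partial$, it solves this explicitly: $H(bz+\bar c\bar z)=e^{2i\Im(az)+(|b|^2-|c|^2)|z|^2/2}$. Then $|H(bz+\bar c\bar z)|^2=e^{(|b|^2-|c|^2)|z|^2}$ together with \eqref{A3}--\eqref{A4} excludes $|c|\le|b|$ in one line; when $|b|=|c|\neq0$, the map $z\mapsto bz+\bar c\bar z$ has a line as image, on which $|H|=1$. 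The explicit $H$ composed with the symplectic map $M=R^{-1}$ then yields the GEF covariance. Your pathwise route gives the almost sure holomorphy of $e^{|\xi|^2/2}F^{GEF}$ directly, which the paper only recovers afterwards from the covariance via Lemma \ref{le_gef}. The cost is a three-way case split, and the case $|b|>|c|$ needs a positivity argument that the paper gets from \eqref{A4} for free.

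Two steps should be tightened. First, in Step 2 nothing remains to be pinned down. $\phi(z)=-2i\Im\big(z(\bar ac+a\bar b)\big)/\kappa^2$ is purely imaginary, so $\E|G(\xi)|^2=e^{|\xi|^2}$. Since $\E[G(\xi)\overline{G(\eta)}]$ is holomorphic in $\xi$ and antiholomorphic in $\eta$, it is determined by its diagonal, hence equals $e^{\xi\bar\eta}$. This gives the kernel $e^{-|\xi-\eta|^2/2}e^{i\Im(\xi\bar\eta)}$ without checking that $R$ is symplectic.

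Second, in Step 3 for $|b|>|c|$, you assert rather than prove that the variance cannot be constant. Set $\kappa'=(|b|^2-|c|^2)^{1/2}$ and $\xi=(\bar bz-\bar c\bar z)/\kappa'$. Then $b\partial+c\bar\partial=\kappa'\partial_\xi$, and $F^0=e^{a\xi/\kappa'+|\xi|^2/2}h$ with $h$ antiholomorphic in $\xi$. Constant variance forces $\E|h|^2=e^{-|\xi|^2-2\Re(a\xi)/\kappa'}$, which has a strict global maximum. But $\E|h|^2$ is subharmonic, so the maximum principle gives a contradiction. That is the missing sentence.

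Your ``concrete'' fallback does not work as stated. The identity $\E|\Dop_1F^0|^2-\E|\Dop_2F^0|^2=1$ combined with $\E|(b\Dop_1+c\Dop_2)F^0|^2=|a|^2$ leaves the cross term $\E[\Dop_1F^0\,\overline{\Dop_2F^0}]$ uncontrolled. What works is the full commutator computation. Set $T=b\Dop_1+c\Dop_2-a$ and use the stationarity adjoints $\E[\Dop_1X\,\overline{Y}]=-\E[X\,\overline{\Dop_2Y}]$ (and symmetrically) for jointly twisted-stationary $X,Y$ at a common point. Then $T^*=-\bar c\Dop_1-\bar b\Dop_2-\bar a$, and $[\Dop_1,\Dop_2]=I$ gives $\E|T^*F^0|^2=\E|TF^0|^2+|c|^2-|b|^2=|c|^2-|b|^2$. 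This yields only $|c|\ge|b|$, so your separate $|b|=|c|$ argument is still needed. In that case $b\partial+c\bar\partial=(b/v)\partial_v$ with a factor that need not be unimodular, which is harmless.
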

\begin{proof}
The deterministic function $F^1$ plays no role in this proposition, so we will simplify the notation by assuming that $F^1 \equiv 0$ and thus $F = F^0$.

Equation \eqref{eq:linrel} can be rewritten in terms of twisted derivatives as
\[
aF(0) = b\mathcal{D}_1 F(0) + c\mathcal{D}_2 F(0).
\]
Since the stochastics of $F$ are invariant under twisted shifts, the same relation holds when we replace $F$ by $F_z$. This implies that, for each $z \in \C$,
\begin{align}\label{eq_fz}
aF(z) = b\mathcal{D}_1 F(z) + c\mathcal{D}_2 F(z)
\end{align}
holds almost surely (where the exceptional event that may depend on $z$). Multiplying both sides of \eqref{eq_fz} by $\overline{F(0)}$ and taking an expectation yields
\[
aH(z) = b\mathcal{D}_1 H(z) + c\mathcal{D}_2 H(z), \qquad z \in \mathbb{C}.
\]
This is equivalent with 
\begin{equation}\label{eq:H}
(a+b\bar z/2- cz/2)H(z)= b\partial H(z)+ c \bar \partial H(z), \qquad z \in \mathbb{C}.
\end{equation}
Define 
$$
G(z) = H(bz+\bar c \bar z) 
$$
so that by the chain rule
\begin{align}\label{eq_pde}
\partial G(z)= b \partial H(bz + \bar c \bar z) + c \bar \partial H( bz + \bar c \bar z). 
\end{align}
Equation \eqref{eq:H} can be written in terms of $G$ as
\begin{align}\label{eq_pde_G}
\partial G(z) = G(z) \bigg[ a+b \frac{\overline{bz + \bar c \bar z}}{2} - c \frac{bz + \bar c \bar z}{2} \bigg]
=
G(z) \bigg[ a+ \frac{|b|^2-|c|^2}{2} \bar z \bigg]. 
\end{align}
As a consequence of \eqref{eq_Hsym},
\begin{align*}
G(-z)= \overline{G(z)}, \qquad z \in \mathbb{C}.
\end{align*}
Combining this with \eqref{eq_pde_G} we obtain
\begin{align}\label{eq_pde_G2}
-\bar\partial G(-z) = G(-z) \bigg[ \bar a+ \bar b \frac{{bz + \bar c \bar z}}{2} - \bar c \frac{\bar b \bar z + c z}{2} \bigg]. 
\end{align}

We note that the function
\begin{equation*}
\tilde{G}(z)
= e^{2i \mathrm{Im}(az) + (|b|^2-|c|^2)|z|^2/2}, \qquad z \in \mathbb{C},
\end{equation*}
also solves \eqref{eq_pde_G} and \eqref{eq_pde_G2}, while $\tilde{G}(0)= 1 = H(0) = G(0)$. Since $\tilde{G}$ is non-vanishing, we can use the PDEs \eqref{eq_pde_G} and \eqref{eq_pde_G2} to show that $(\partial(G/\tilde{G}), \bar\partial(G/\tilde{G}) )\equiv (0,0)$ and conclude that $G \equiv \tilde{G}$, i.e.,
\begin{equation} \label{eq:Ga}
H(bz+\bar c \bar z)=
e^{2i \mathrm{Im}(az) + (|b|^2-|c|^2)|z|^2/2}, \qquad z \in \mathbb{C}.
\end{equation}
As a consequence,
\begin{equation} \label{eq:H_rel}
|H(bz+ \bar c \bar z)|^2 = 
e^{(|b|^2-|c|^2)|z|^2}, \qquad z \in \mathbb{C}.
\end{equation}
On the other hand, by \eqref{A3} and \eqref{A4}, $|H(z)|^2$ has a strict local maximum at $z=0$, which forces $|c|> |b|$, as claimed (unless $b=c=0$, in which case \eqref{eq:linrel} implies $a=b=c=0$, which we excluded.) 

Let us replace $z$ by $\bar z/(|c|^2-|b|^2)^{1/2}$ in \eqref{eq:Ga} to get
\begin{equation} \label{eq:H_rel2}
H\big(M(z)\big)
= e^{-2i \mathrm{Im}(\overline{a_*} z) -|z|^2/2}, 
\end{equation}
where 
\begin{align*}
M(z)= {(|c|^2-|b|^2)^{-1/2}} (\bar c z+ b \bar z)
\quad\mbox{and}\quad
a_*= {(|c|^2-|b|^2)^{-1/2}}a\,.
\end{align*}
The linear map $M$ is \emph{symplectic}, that is,
\begin{align*}
\Im \big( M(z) \overline{M(w)} \big) = \Im \left(z \bar w\right), \qquad z,w \in \mathbb{C},
\end{align*}
so by \eqref{eq:H_rel2} we obtain
\begin{align*}
\E \Big[ e^{2i \mathrm{Im} (\overline{a_*} z)}F(M(z)) \cdot  \overline{ e^{2i \mathrm{Im}( \overline{ a_*} w)} F(M(w)) } \Big] &= e^{2i \mathrm{Im}(\overline{ a_*}(z-w))} H(M(z-w)) e^{i \Im (z \bar w)}
\\
&= e^{-|z-w|^2/2} e^{i \Im (z \bar w)}.
\end{align*}
Hence,
\begin{align}\label{eq_con2}
F^{GEF}(z):=e^{2i \mathrm{Im}(\overline{a_*} z)} 
F(M(z))
\end{align}
is a GWHF with twisted kernel $H^{GEF}(z):=e^{-|z|^2/2}$. (Clearly, almost all realizations of $F^{GEF}$ are $C^2$ because those of $F$ are.)
We note that the inverse transformation of $M$ is \eqref{eq_R},
and \eqref{eq_con2} then yields \eqref{eq_con}.
\end{proof}

\subsection{Zeros and almost multiplicities}
We first note that the zeros of $F$ are almost surely non-degenerate.
\begin{lemma}\label{lemma_nd}
The following holds almost surely:
\begin{align}\label{eq_nd}
			\det DF(z) \not=0, \qquad z \in \{F=0\}.
		\end{align}
\end{lemma}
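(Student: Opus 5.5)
Our plan is to apply Bulinskaya's lemma, in the form of \cite[Proposition 6.5]{level}, to the random field $\mathbb{R}^2 \to \mathbb{R}^2$ given by $F$ on the bounded domains $D_n$, $n \in \mathbb{N}$. We take a countable union over $n$ at the end. The lemma requires three things: the paths are $C^1$ (here they are even $C^2$ almost surely, by assumption); the joint density of $F(z)$ is bounded near $0$ uniformly in $z$; and the modulus of continuity of $DF$ is controlled. Under these hypotheses there is almost surely no point with $F(z)=0$ and $\det DF(z)=0$, which is exactly \eqref{eq_nd}.

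The first step is the density bound. Since $F(z) = F^1(z)+\sigma F^0(z)$ and $F^0(z)$ is circularly symmetric complex Gaussian with variance $H(0)=1$ by \eqref{A3}, the $\mathbb{R}^2$-valued vector $F(z)$ is Gaussian with covariance $\tfrac{\sigma^2}{2} I_2$. Its density is therefore bounded by $1/(\pi\sigma^2)$, uniformly in $z$ and irrespective of the mean $F^1(z)$. The regularity hypothesis is also routine. Since $F \in C^2$ almost surely, $DF$ is locally Lipschitz, and its Lipschitz constant on compact sets is almost surely finite. This is the form needed in the proof of Bulinskaya's lemma, which covers the set $\{F=0,\det DF=0\}\cap \overline{D}_n$ by $O(\epsilon^{-2})$ small squares. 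On each square it estimates the probability that $|F|\lesssim \epsilon^{2}$ near a point where $\det DF$ is small, using the density bound at a grid point together with a Taylor expansion. Alternatively, Lemma~\ref{lem:excu} directly provides the needed tail control on second derivatives.

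The main obstacle is that the classical statement of Bulinskaya's lemma only requires $F(z)$ to have a bounded density and $F$ to be $C^1$ with a modulus of continuity, and its conclusion concerns the set $\{F=u\}$ for fixed $u$ with the condition $\det DF = 0$. Concretely, \cite[Prop.~6.5]{level} states that if $X\colon U\subset\mathbb{R}^d\to\mathbb{R}^d$ is $C^1$ and $X(t)$ has density bounded near $u$ uniformly in $t$, then almost surely there is no $t$ with $X(t)=u$ and $\det X'(t)=0$. We therefore check that we are in exactly this dimension-matching setting ($d=2$), so that no nondegeneracy of the joint law of $(F,DF)$ is needed. This matters because Proposition~\ref{prop_singular} shows that such a joint law can indeed be degenerate. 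With this check done, we apply the lemma on each $D_n$ with $u=0$ and conclude by countable additivity.
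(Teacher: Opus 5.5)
Your proposal is correct and is the paper's argument: both invoke \cite[Proposition 6.5]{level}, checking that the paths are $C^2$ and that the density of $F(z)\in\mathbb{R}^2$ is bounded by $1/(\pi\sigma^2)$ uniformly in $z$. You also rightly note that the mean plays no role and that no nondegeneracy of the joint law of $(F,DF)$ is needed. Your parenthetical sketch of how Bulinskaya's lemma is proved is not accurate: at a degenerate zero $DF$ generically has rank one, so $|F|$ at a nearby grid point is only $O(\epsilon)$, and the actual argument exploits the elongation of $\{|F|\le\rho\}$ along the kernel direction. Since you use the proposition as a black box, this does not affect your proof.
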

\begin{proof}
The claim follows from \cite[Proposition 6.5]{level}, which requires that $F$ be $C^2$ almost surely, as we assume, and that the probability density of $F(z)$ be bounded near the origin uniformly on $z$, which is true because $F(z)$ is a circularly symmetric complex Gaussian vector around its mean, has
	$\var[F(z)] = \sigma^2 H(0)=\sigma^2$, and a bounded mean, by \eqref{A1}. 
\end{proof}
The following proposition gives an analytic expression for the expected charge of a GWHF on a domain, and provides a useful benchmark to numerically test Theorem \ref{th_main}.
\begin{prop}[First intensity of charged zeros]\label{lemma_fi}
Set
\begin{align}\label{eq_fi}
\rho^\pm_1(z) :=  \frac{1}{\pi \sigma^2} e^{-|F^1(z)|^2/\sigma^2 } \Big( |\Dop_1F^1(z)-c_1F^1(z) |^2 - |\Dop_2F^1(z)- c_2F^1(z)|^2 + \sigma^2 \Big)  , \, z \in \C,
\end{align}
where $c_1:=\partial H(0)$ and $c_2:=\bar \partial H(0)$. 
Then, for every Borel set $E \subset \C$,
\begin{align}
\E \Big[\sum_{z\in E, F(z)=0} \charge_z \Big]= \int_E \rho^\pm_1(z)\, dA(z).
\end{align}
Similarly, there exists a measurable function
$\rho_1: \mathbb{C} \to [0,\infty)$ such that
\begin{align}\label{eq_ro1}
\E \big[ \# \{z \in E: F(z)=0\} \big]= \int_E \rho_1(z)\, dA(z).
\end{align}
As a consequence, if $E$ has null Lebesgue measure, then
$\{F=0\} \cap E =  \emptyset$ almost surely.
\end{prop}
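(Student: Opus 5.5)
The plan is to apply the Kac--Rice formula (in the form of \cite[Theorems 6.2 and 6.4]{level}) to the $\mathbb{R}^2\to\mathbb{R}^2$ field $F$, both with weight $\det DF$ and with weight $|\det DF|$. The hypotheses are easy to verify. $F$ is almost surely $C^2$. $F(z)$ is a nondegenerate circularly symmetric Gaussian with mean $F^1(z)$ and covariance $\sigma^2 I$, so its density at $0$ is $p_{F(z)}(0)=\frac{1}{\pi\sigma^2}e^{-|F^1(z)|^2/\sigma^2}$, which is bounded uniformly in $z$. Zeros are almost surely nondegenerate by Lemma \ref{lemma_nd}. Since $\charge_z=\sgn\det DF(z)$, the formula gives
\begin{align}
\E\Big[\sum_{z\in E,F(z)=0}\charge_z\Big]=\int_E \E\big[\det DF(z)\,\big|\,F(z)=0\big]\,p_{F(z)}(0)\,dA(z).
\end{align}
The same formula with $|\det DF(z)|$ in place of $\det DF(z)$ defines $\rho_1$. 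This $\rho_1$ is measurable and locally integrable, because the conditional moments are bounded by \eqref{A1} and the $C^2$ bounds on $H$. The final assertion follows from \eqref{eq_ro1}: if $|E|=0$, the expected number of zeros in $E$ vanishes.

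The main task is to compute the conditional expectation. By \eqref{eq_jacv}, $\det DF=|\partial F|^2-|\bar\partial F|^2$. On the event $F(z)=0$ we have $\partial F(z)=\Dop_1F(z)$ and $\bar\partial F(z)=\Dop_2F(z)$. Using the commutation \eqref{eq_comm} together with twisted stationarity, I reduce to $z=0$ by replacing $F$ with $F_z$. The twisted shift changes $F^1$ to $F^1_z$ and leaves the law of $F^0$ unchanged, and $F^1_z(0)=F^1(z)$, $\Dop_jF^1_z(0)=\Dop_jF^1(z)$.

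By Lemma \ref{lemma_der}, the relevant covariances are obtained by differentiating $H(z-w)e^{(z\bar w-\bar z w)/2}$ at $z=w=0$. This gives $\E[\partial F^0(0)\overline{F^0(0)}]=\partial H(0)=c_1$ and $\E[\bar\partial F^0(0)\overline{F^0(0)}]=c_2$. Gaussian regression then shows that, conditionally on $F(0)=0$ (that is, $F^0(0)=-F^1(0)/\sigma$), the variable $\Dop_jF(0)$ is complex Gaussian with mean $\Dop_jF^1(0)-c_jF^1(0)$ and variance $\sigma^2\big(v_j-|c_j|^2\big)$, where $v_1=\E|\partial F^0(0)|^2$ and $v_2=\E|\bar\partial F^0(0)|^2$. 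Writing $\E|X|^2=|\E X|^2+\var X$, it remains to show that
\begin{align}
(v_1-|c_1|^2)-(v_2-|c_2|^2)=1.
\end{align}

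This identity is the only delicate step, and I would verify it in two parts. First, by direct differentiation, $v_1=-\partial\bar\partial H(0)+\tfrac12$ and $v_2=-\partial\bar\partial H(0)-\tfrac12$. The sign of the $\pm\tfrac12$ comes from the twisting factor $e^{(z\bar w-\bar z w)/2}$, so $v_1-v_2=1$. Second, the symmetry \eqref{eq_Hsym}, $H(-z)=\overline{H(z)}$, differentiated at $0$ gives $-\partial H(0)=\overline{\bar\partial H(0)}$. Hence $|c_1|=|c_2|$ and the correction terms cancel. Substituting into the Kac--Rice integrand gives exactly \eqref{eq_fi}.

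I expect the main obstacle to be the careful bookkeeping of the Wirtinger derivatives of the twisted covariance kernel. A secondary point is checking that the version of the Kac--Rice formula used allows the signed weight $\det DF$; it does, because $|\det DF|$ is integrable, so the formula can be applied separately to the positive and negative parts.
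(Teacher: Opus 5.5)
Your proposal is correct and takes essentially the same route as the paper. Both use Kac--Rice with $p_{F(z)}(0)=\frac{1}{\pi\sigma^2}e^{-|F^1(z)|^2/\sigma^2}$, rewrite $\det DF$ as $|\Dop_1F|^2-|\Dop_2F|^2$ on the zero set, apply Gaussian regression with the coefficients $c_j$, and obtain the cancellation from $v_1-v_2=1$ and $|c_1|=|c_2|$. Your normalization $v_j=-\partial\bar\partial H(0)\pm\tfrac12$ is the right one, as the GEF case ($v_1=1$, $v_2=0$) confirms.

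The one place to tighten is the signed weight. What needs justification there is not integrability but extending the weighted Kac--Rice formula to the discontinuous weights $\mathbf{1}\{\pm\det DF>0\}$. The paper does this with a regularization argument; you can do it by monotone approximation with continuous weights, and local integrability then lets you subtract.
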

\begin{proof}
To obtain the functions $\rho_1$ and $\rho^\pm_1$ we shall invoke Kac-Rice's formulae \cite{level, adler}. Specifically, to obtain a measurable function $\rho_1$
satisfying \eqref{eq_ro1}, we invoke \cite[Theorem 6.2]{level}, which requires: (a) $F$ to be almost surely $C^1$, which we assume; (b) $\var[F(z)]$ to be non-zero for all $z$, which in our case is $\sigma^2$; (c) the non-degeneracy condition \eqref{eq_nd}. (The theorem also gives a concrete expression for $\rho_1$, which we shall not need.) 

For $\rho^\pm_1$ we need a weighted version of Kac-Rice's formula, found in \cite[Theorem 6.4]{level}, and a routine regularization argument, which can be found in the proof of \cite[Lemma 3.3]{hkr22} and we skip. Important for us is that \cite[Theorem 6.4]{level} provides the expression,
\begin{align} \label{eq_rhopm1}
\rho_1^\pm(z)&=\frac{1}{\pi \sigma^2} e^{-|F^1(z)|^2/\sigma^2 } \cdot \mathbb{E} \big[ \det DF(z) \,|\, F(z) =0] \nonumber \\
&= \frac{1}{\pi \sigma^2} e^{-|F^1(z)|^2/\sigma^2 } \cdot \mathbb{E} \big[ \big( |\partial F(z)|^2-|\bar \partial F(z)|^2\big)  \,|\, F(z) =0] \nonumber \\
&=\frac{1}{\pi \sigma^2} e^{-|F^1(z)|^2/\sigma^2 } \cdot \mathbb{E} \big[ \big( |\mathcal D_1 F(z)+ \tfrac{\bar z}{2} F(z)|^2-|\mathcal D_2 F(z)- \tfrac{z}{2} F(z)|^2\big)  \,|\, F(z) =0\big]. 
\end{align} 
Using Lemma \ref{lemma_der} and stochastic invariance under twisted shifts, we write 
$$c_1= \E [\mathcal D_1 F^0(z) \cdot \overline{F^0(z)}]
=\E [\mathcal D_1 F^0(0) \cdot\overline{F^0(0)}]
=\E [\partial F^0(0) \cdot\overline{F^0(0)}]
= \partial H(0),
$$ 
and 
$$c_2= \E [\mathcal D_2 F^0(z) \cdot\overline{F^0(z)}]= 
\E [\mathcal D_2 F^0(0) \cdot\overline{F^0(0)}]
=\E [\bar\partial F^0(0) \cdot\overline{F^0(0)}]
=\bar  \partial H(0).
$$ 
As a consequence of \eqref{eq_Hsym}, we have 
that
\begin{align}\label{eq_variances_a}
|c_1|^2=|c_2|^2.
\end{align}
Similarly, direct computation shows that
\begin{align} \label{eq_variances}
\var \mathcal D_1 F^0(z)=-\Delta H(0)+1/2, \quad 
\var \mathcal D_2 F^0(z)=-\Delta H(0)-1/2;
\end{align}
see \cite[Lemma 2.2]{fhkr} for a detailed calculation.

The constants $c_j$ are defined so that $\mathcal D_jF^0(z) - c_j F^0(z)$ is independent of $F^0(z)$, which facilitates conditioning. (In technical terms, we invoke the so-called  \emph{Gaussian regression formula}, see, e.g., \cite[Proposition 1.2]{level}.) We shall split the conditional expectation in \eqref{eq_rhopm1} into two terms. We first write   
\begin{align*}
&\mathbb{E} \big[ |\mathcal D_1 F(z)+ \tfrac{\bar z}{2} F(z)|^2  \,|\, F(z) =0\big] \nonumber  \\
&= \E \big[\big| \mathcal D_1 F^1(z) + \sigma (\mathcal D_1 F^0(z) -c_1 F^0(z))+c_1 \sigma F^0(z)+ \tfrac{\bar z}{2} F^1(z) + \sigma \tfrac{\bar z}{2} F^0(z)\big|^2 \,|\, F^0(z) =- {F^1(z)}/{\sigma} \big]. 
\end{align*}
Because $\mathcal D_1 F^0(z)- c_1 F^0(z)$ and $F^0(z)$ are independent, the previous conditional expectation equals 
\begin{align*}
&\E \Big[ \big| \mathcal D_1 F^1(z) + \sigma (\mathcal D_1 F^0(z)- c_1 F^0(z))-c_1 F^1(z) \big|^2\Big] \\
&=\big| \mathcal D_1 F^1(z) - c_1 F^1(z)\big|^2 + \sigma^2 \cdot \E \big|\mathcal D_1 F^0(z)- c_1 F^0(z)\big|^2 \\
&= \big| \mathcal D_1 F^1(z) - c_1 F^1(z)\big|^2+ \sigma^2 \cdot \big( \var (\mathcal D_1 F^0(z))- 2 |c_1|^2+ |c_1|^2 \var F^0(z) \big) \\
&= \big| \mathcal D_1 F^1(z) - c_1 F^1(z)\big|^2 + \sigma^2 \cdot \var (\mathcal D_1 F^0(z)) - \sigma^2 |c_1|^2, 
\end{align*}
where for the first equality we are using that $\mathcal D_1 F^0$
and $F^0$ are circularly symmetric, zero mean complex Gaussians. For the third equality, we employed: $\var F^0(z)=H(0)=1$.

We turn to the second term contributing to \eqref{eq_rhopm1}, which is analyzed similarly:
\begin{align*}
&\mathbb{E} \big[ |\mathcal D_2 F(z)- \tfrac{z}{2} F(z)|^2  \,|\, F(z) =0\big] \nonumber  \\
&= \E \Big[\big| \mathcal D_2 F^1(z) + \sigma (\mathcal D_2 F^0(z) -c_2 F^0(z))+c_2 \sigma F^0(z)- \tfrac{z}{2} F^1(z) - \sigma \tfrac{z}{2} F^0(z)\big|^2 \,|\, F^0(z) =- {F^1(z)}/{\sigma} \Big] \\
&= \E \big[ \big| \mathcal D_2 F^1(z) + \sigma (\mathcal D_2 F^0(z)- c_2 F^0(z))-c_2 F^1(z) \big|^2\big] \\
&=\big| \mathcal D_2 F^1(z) - c_2 F^1(z)\big|^2 + \sigma^2 \cdot \E |\mathcal D_2 F^0(z)- c_2 F^0(z)|^2 \\
&= \big| \mathcal  D_2 F^1(z) - c_2 F^1(z)\big|^2 + \sigma^2 \cdot \var (\mathcal D_2 F^0(z)) - \sigma^2 |c_2|^2.
\end{align*}
Combining all the above steps with \eqref{eq_variances_a} and \eqref{eq_variances}, we obtain 
\begin{equation*}
\E \big[ \big| \partial F(z) \big|^2- \big| \bar \partial F(z) \big|^2 \,\big|\, F(z)=0 \big] 
= \big| \mathcal D_1 F^1(z) - c_1 F^1(z)\big|^2 - \big| \mathcal D_2 F^1(z) - c_2 F^1(z)\big|^2 + \sigma^2. \qedhere
\end{equation*}
\end{proof}
\begin{rem}[Universal equilibrium of charge]
When $F^1 \equiv 0$, we have $\rho^\pm_1 \equiv \frac{1}{\pi}$, a fact that was also derived in \cite[Theorem 1.12]{hkr22}, and, interestingly, is \emph{independent of the twisted kernel} $H$.
Remarkably, Proposition \ref{lemma_fi} shows that \emph{the universal equilibrium of charge also holds in the presence of a non-zero mean} $F^1$, as long as the twisted kernel $H$ is real-valued, a situation that is often encountered in applications.
Indeed, when $H$ is real-valued, the identity $\overline{H(z)}= H(-z)$ forces $c_1=c_2=0$, and \eqref{eq_fi} is independent of $H$. 
\end{rem}

We now turn attention to those zeros of a GWHF $F$ that are degenerate in the sense that $DF$ is rank-deficient, or approximately so up to a certain numerical tolerance. We estimate the likeliness of encountering such approximate zeros, as they are a source of challenge for \calgo.

\begin{prop}[Approximately degenerate zeros are unlikely to occur]
\label{prop_gridevent}
Let $\alpha,\beta>0$ and $0<\delta \leq 1$. Then the following hold.
\begin{itemize}
\item[(i)] For each $\lambda \in \mathbb{C}$, the probability that the estimate
\begin{align}\label{eq_lala}
\lvert F_\lambda(0) \rvert \leq \alpha
\quad \mbox{and}\quad
|\det DF_\lambda(0)| \leq \beta
\end{align}
holds is at most $\leq C \alpha^2 \beta \sigma^{-4}$ where the constant $C$ depends only on the twisted kernel $H$ and not on the mean $F^1$.
\item[(ii)] The probability that the estimate \eqref{eq_lala} holds for some $\lambda \in \gridsparse$ is at most $\leq C L^2 \alpha^2 \beta \sigma^{-4}\delta^{-3/2}$,
where $\gridsparse$ is the set \eqref{eq_gs}.
\end{itemize}
(Here, $C$ is a constant that depends only on the twisted kernel $H$.)
\end{prop}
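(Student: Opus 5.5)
Part (ii) follows from part (i) by a union bound over $\lambda \in \gridsparse$, using $\#\gridsparse \leq C L^2 \delta^{-3/2}$ from Lemma \ref{lemma_sparse}. The work is in part (i). Because $F_\lambda$ is again a GWHF with the same twisted kernel $H$ and a mean obeying the same bound, it suffices to treat $\lambda=0$, with a constant that does not depend on $F^1$. We write $\det DF(0) = |\partial F(0)|^2 - |\bar\partial F(0)|^2$, cf.\ \eqref{eq_jacv}. The plan is to condition on $F(0)$ and then control the conditional law of the Jacobian.

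\textbf{Step 1 (conditioning on $F(0)$).} Since $F(0)$ has variance $\sigma^2$, its density is at most $(\pi\sigma^2)^{-1}$. Hence
\[
\mathbb{P}\big(|F(0)|\le\alpha,\ |\det DF(0)|\le\beta\big) \le \frac{\alpha^2}{\sigma^2}\,\sup_{|u|\le \alpha}\mathbb{P}\big(|\det DF(0)|\le\beta \,\big|\, F(0)=u\big),
\]
and it remains to show that the conditional probability is at most $C\beta\sigma^{-2}$. By Gaussian regression, as in the proof of Proposition \ref{lemma_fi}, conditionally on $F(0)=u$ the vector $(X,Y):=(\partial F(0),\bar\partial F(0))$ equals a deterministic mean $m(u)$ plus $\sigma(\xi,\eta)$. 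Here $(\xi,\eta)$ is the residual of $(\partial F^0(0),\bar\partial F^0(0))$ after projecting out $F^0(0)$; it is a centered, circularly symmetric complex Gaussian vector with a fixed covariance $\Sigma$ depending only on $H$.

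\textbf{Step 2 (nondegeneracy, non-GEF case).} If $\Sigma$ is nonsingular, then $|X|^2-|Y|^2$ is an indefinite Hermitian quadratic form in a Gaussian vector in $\mathbb{C}^2\cong\mathbb{R}^4$ with covariance $\sigma^2\Sigma$. We whiten, writing $(X,Y)=m+\sigma\Sigma^{1/2}g$. The function $g\mapsto \sigma^{-2}\big(|X|^2-|Y|^2\big)$ is then a quadratic polynomial $Q(g)+\ell(g)+c$ whose Hessian is a nondegenerate form of signature $(2,2)$, fixed by $\Sigma$. We need a bound $\mathbb{P}(|Q(g)+\ell(g)+c|\le t)\le Ct$ uniform in $\ell$ and $c$. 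To get it, diagonalize $Q$ and complete squares, which absorbs $\ell$ into a shift. This leaves $\sum_k q_k (g_k-s_k)^2 + c'$ with two positive and two negative eigenvalues. Take one positive direction $g_1$ and one negative direction $g_3$, condition on the remaining coordinates, and change variables to $(r,w)=(q_1 h_1^2, |q_3| h_3^2)$ for shifted Gaussians $h_1,h_3$. The density of $r-w$ is the convolution of two densities of the form $\lesssim r^{-1/2}$, which is bounded uniformly in the shifts. This gives the required bound $C t$ with $t=\beta/\sigma^2$.

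\textbf{Step 3 (singular case, the main obstacle).} If $\Sigma$ is singular, there is a deterministic linear relation of the form \eqref{eq:linrel}. By Proposition \ref{prop_singular}, $F^0$ is then a symplectically transformed GEF-type field with $|c|>|b|$. In this case the residual is one-dimensional: the residual vector is $\zeta\,(v_1,v_2)$ with $\zeta$ a standard complex normal and $|v_1|\ne|v_2|$, the inequality coming from $|c|>|b|$. This is exactly where that part of Proposition \ref{prop_singular} is needed. The Jacobian becomes $\sigma^2(|v_1|^2-|v_2|^2)|\zeta-s|^2 + c''$ after completing the square. Since $|\zeta-s|^2$ has density bounded uniformly in $s$ (noncentral $\chi^2$ with 2 degrees of freedom), the bound $C\beta\sigma^{-2}$ again holds, with $C$ depending only on $H$ through $|v_1|^2-|v_2|^2$.

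Combining Steps 1–3 gives (i), and hence (ii). Keeping the constants uniform in the mean $F^1$ is automatic, because the mean only enters through the shifts $s$, $\ell$ and $c$, and every bound above is uniform over those shifts.
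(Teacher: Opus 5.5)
The gap is in Step 2, the generic (non-GEF) case. After completing squares you keep one positive direction $g_1$ and one negative direction $g_3$. You then claim that the density of $r-w$, with $r=q_1h_1^2$ and $w=|q_3|h_3^2$, is bounded uniformly in the shifts. This is false already for zero shifts. Take $q_1=|q_3|=1$ and $h_1,h_3$ independent standard normals. Then $h_1^2-h_3^2=(h_1-h_3)(h_1+h_3)=2UV$ with $U,V$ independent standard normals, and its density $\frac{1}{2\pi}K_0(|x|/2)$ (with $K_0$ the modified Bessel function) has a logarithmic singularity at $x=0$. Your heuristic fails in the same way: for opposite signs, the convolution of the two $r^{-1/2}$ profiles is $\int_0^\infty (x+w)^{-1/2}w^{-1/2}\,dw=\infty$. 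Using that the densities integrate to one only gives a bound of order $\log(1/|x|)$ near $0$. So, conditioning on $g_2,g_4$ as you propose, the best you can get is $\mathbb{P}(|Q(g)+\ell(g)+c|\le t)\lesssim t\log(1/t)$. That is $C\alpha^2\beta\sigma^{-4}\log(\sigma^2/\beta)$, which does not prove (i).

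The repair is to pair directions of the \emph{same} sign: condition on the two negative coordinates and keep $g_1,g_2$. The convolution then lives on $[0,x]$, and $\int_0^x r^{-1/2}(x-r)^{-1/2}\,dr=\pi$. Hence the density of $q_1h_1^2+q_2h_2^2$ is at most $C(q_1q_2)^{-1/2}$, uniformly in the shifts. More intrinsically, $|X|^2-|Y|^2$ is a Hermitian form, so $\Sigma^{1/2}\mathrm{diag}(1,-1)\Sigma^{1/2}$ has a single positive eigenvalue $\lambda_+$, of real multiplicity two. After a unitary change of variables the positive part is $\lambda_+|w_1-s_1|^2$ with $w_1$ standard complex normal, and $|w_1-s_1|^2$ has density at most $1$. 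This is the same mechanism as in your Step 3, and it is exactly what the paper's bound $\mathrm{Area}(T(u,s))\le 2\pi\beta$ for an annulus in one complex variable encodes. With this change your proof is correct. It then differs from the paper only in organization. You condition on $F(0)$, whose density supplies the factor $\alpha^2/\sigma^2$, and work with the Schur complement $\Sigma$. The paper instead bounds the full three-dimensional Gaussian density by $a^{-3}e^{-|z_2|^2/b}$ and integrates over the annulus in $z_3$ directly. Your Step 3 is essentially the paper's singular-case argument, and part (ii) is the same union bound.
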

\begin{proof}
We focus on proving part (i). Once this is done, a union bound and \eqref{eq_eg} will yield part (ii). Since the function $F^0$ is invariant under twisted shifts, it suffices to prove (i) only in the case $\lambda=0$. We also assume first that $\sigma=1$. The case of general $\sigma$ is dealt with in the end. We denote by $\Gamma \in \mathbb{C}^{3 \times 3} $ the covariance matrix of the Gaussian random vector  $(F^0(0), \partial F^0(0), \bar \partial F^0(0))$.

\noindent {\bf Step 1}. We start with the observation that the set
\begin{align}
T(u,s):= \{z \in \mathbb{C}:  \big| |z + u|^2 - s \big| \leq \beta \}, \qquad u \in \mathbb{C}, s \in \mathbb{R},
\end{align}
has area
\begin{align}\label{eq_hasa}
\mathrm{Area}(T(u,s)) \leq 2 \pi \beta.   
\end{align}
For negative $s$ the claim is clear. For $s\geq \beta$, we have 
\[\mathrm{Area}\big( T(u,s) \big) = \pi(\beta+s)- \pi (s- \beta)= 2 \pi \beta,
\]
while, for $s < \beta$,
\[
\mathrm{Area}\big( T(u,s) \big) \leq \pi (\beta+s) \leq 2 \pi \beta. 
\]
So, indeed, \eqref{eq_hasa} holds.

\smallskip

\noindent {\bf Step 2}. We start by assuming that $\Gamma$ is non-singular. We denote by  
\begin{equation} \label{eq_means}
(\mu_1, \mu_2, \mu_3):=
(F^1(0), \partial F^1(0), \bar \partial F^1(0))
\end{equation}
the mean of the vector $(F(0), \partial F(0), \bar \partial F(0))$.
Let $a, b>0$ be numbers such that $a\leq \Gamma\leq b$
in the sense of positive definite matrices. In particular, $\det \Gamma \geq a^3$ and
\begin{align*}
\frac{1}{\det \Gamma} e^{-(z_1, z_2, z_3) \Gamma^{-1}(z_1, z_2, z_3)^*} \leq \frac{1}{a^3}
e^{-(|z_1|^2+|z_2|^2+|z_3|^2)/b}
\leq \frac{1}{a^3} e^{-|z_2|^2/b}
, \qquad z_1,z_2,z_3 \in \mathbb{C}.
\end{align*}
Consequently,
\begin{align}
&\mathbb{P}(|F(0)|\leq \alpha, |\det DF(0)| \leq \beta)
\\
&\qquad = \frac{1}{\pi^3 \det \Gamma}\int_{| \mu_1+ z_1| \leq \alpha, ||\mu_2 +z_2|^2-|\mu_3 + z_3|^2| \leq \beta} e^{-(z_1, z_2, z_3) \Gamma^{-1}(z_1, z_2, z_3)^*} dA(z_1) dA(z_2) dA(z_3)\nonumber
\\
&\qquad \leq \frac{1}{\pi^3 a^3}\int_{| \mu_1+ z_1| \leq \alpha, ||\mu_2 +z_2|^2-|\mu_3 + z_3|^2| \leq \beta} e^{-|z_2|^2/b} dA(z_1) dA(z_2) dA(z_3)\nonumber
\\
&\qquad = \frac{1}{\pi^3 a^3} 
\int_{| z_1| \leq \alpha} 
\int_{z_2\in\mathbb{C}} e^{-|z_2-\mu_2|^2/b}
\int_{\big||z_2|^2-|z_3|^2\big| \leq \beta}
\,dA(z_3) dA(z_2) dA(z_1)
\\
&\qquad =
\frac{1}{\pi^3 a^3}
\mathrm{Area}(B_\alpha(0)) \cdot
\int_{z_2\in\mathbb{C}} e^{-|z_2-\mu_2|^2/b} \cdot
\mathrm{Area}\big(T(0,|z_2|^2)\big) \,dA(z_2)
\\
&\qquad\leq C \alpha^2 \beta,
\end{align}
for a constant $C$ independent of $F^1$. As a consequence, the same estimate also holds if we replace $F$ by $F_\lambda$ with $\lambda \in \C.$

\smallskip

\noindent {\bf Step 3}. We now address the case in which $(F^0(0), \partial F^0(0), \bar \partial F^0(0))$ has a singular covariance. This means that there exist constants $a,b,c$ such that 
\[
aF^0(0)=b\partial F^0(0) + c \bar \partial F^0(0). 
\]
By Proposition \ref{prop_singular}, we have $|c|^2-|b|^2>0$, so we can normalize the constants so that $|c|^2-|b|^2=1.$
By the same Proposition, we have 
\[
F^0(z)= 
e^{\overline{ a_*}  z- a_* \bar z}F^{GEF}\big(cz-b \bar z\big), 
\]
where $F^{GEF}$ is a GWHF with twisted kernel $H^{GEF}(z)=e^{-|z|^2/2}$ and $a_*=-a \bar c-\bar a b.$
By Lemma \ref{le_gef}, $\bar \partial F^{GEF}(0)=0$, and therefore the Jacobian determinant of $F$ at the origin is 
\begin{align*}
\det DF(0)&=|\partial F(0)|^2- |\bar \partial F(0)|^2 \\
& = \big| \partial F^1(0) + \overline{a_*}  F^{GEF}(0) + c \partial F^{GEF}(0)\big|^2- \big| \bar \partial F^1(0) - a_*  F^{GEF}(0) - b \partial F^{GEF}(0)\big|^2,
\end{align*}
where we used the chain rule for Wirtinger derivatives. The vector $(F^{GEF}(0), \partial F^{GEF}(0))$ is, by Lemma \ref{le_gef}, standard complex Gaussian, i.e., the entries are independent and $\mathcal{N}_{\mathbb{C}}(0,1)$ distributed. The density of this Gaussian vector is bounded above by $\frac{1}{\pi^2}$, so we may estimate  
\begin{equation} \label{eq_almost_deg_est}
\mathbb{P}(|F(0)|\leq \alpha, |\det DF(0)| \leq \beta)
\leq \frac{1}{\pi^2}\int_S dA(z_1) dA(z_2),
\end{equation}
where 
\[
S:= \left\{(z_1, z_2) \in \mathbb{C}^2\,:\,
|z_1+\mu_1| \leq \alpha, ~~\big| | \mu_2+ \overline{a_*} z_1+ c z_2|^2  - |\mu_3 - a_* z_1- b z_2|^2 \big| \leq \beta \right\}
\]
and $\mu_1, \mu_2, \mu_3$ are defined as in \eqref{eq_means}. Using that $|c|^2-|b|^2=1$, the second inequality in the definition of $S$ can be rewritten as  
\[
\big||z_2 + u|^2 - s \big|\leq \beta,
\]
where 
\begin{align*}
u= u(z_1) &=  \bar c(\mu_2+ \overline{a_*} z_1)+ \bar b(\mu_3-a_* z_1),\\ 
s = s(z_1) &= |u|^2-  |\mu_2+ \overline{a_*} z_1|^2+ |\mu_3- a_* z_1|^2.
\end{align*}
Recalling \eqref{eq_almost_deg_est} and \eqref{eq_hasa} we conclude:
\[
\mathbb{P}(|F(0)|\leq \alpha, |\det DF(0)| \leq \beta) \leq \frac{1}{\pi^2} \int_{|z_1+ \mu_1|\leq \alpha} \int_{ z_2 \in T(u(z_1),s(z_1))} dA(z_2) dA(z_1) \leq 2 \alpha^2 \beta.
\]
Because the previous estimate is independent of $F^1$, it also holds if we replace $F$ by $F_\lambda$ for arbitrary $\lambda \in \C$. We have thus proved the theorem in the case $\sigma=1$. 

\medskip

\noindent {\bf Step 4}. 
 Finally, we will eliminate the assumption $\sigma=1$ and deal with the general case $\sigma>0$. Applying result for $\sigma=1$ to the function $z \mapsto \frac{1}{\sigma} F^1(z) +F^0(z)$, we obtain
 \begin{align*}
&\mathbb{P}(|F(0)|\leq \alpha, |\det DF(0)| \leq \beta) \\
&= \mathbb{P}\bigg(|F^1(0)/\sigma + F^0(0)|\leq \alpha/\sigma, |\det D\big( \frac{1}{\sigma}F^1+ F^0\big)(0)| \leq \beta/\sigma^2\bigg)
\leq C \alpha^2 \beta/ \sigma^4,
 \end{align*}
 where $C$ depends only on the twisted covariance kernel $H$. 
\end{proof}

\subsection{Poincar\'{e} index}
We now note that Poincar\'{e}'s index formula is applicable to GWHFs; see \cite{fhkr} for a comprehensive study of Poincar\'{e} indices of GWHFs and applications to the classification of zeros.
\begin{lemma}\label{le_p}
Let $\Omega \subset \mathbb{C}$ be a square or a disk. Then, almost surely,
\begin{align}\label{eq_Poincare}
\int_{\partial \Omega} \frac{dF}{F}
		= \int_{\partial \Omega} \left(\frac{\partial F }{F} \,dz + \frac{\bar{\partial} F }{F} \,d\bar{z}\right) = 2\pi i \sum_{z \in \Omega, F(z)=0} \charge_z.
\end{align}
\end{lemma}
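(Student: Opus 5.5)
The plan is to reduce the statement to the classical degree (winding number) formula for a $C^1$ map with finitely many non-degenerate zeros inside a Jordan domain and none on its boundary. We need three almost sure facts about $F$.

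\textbf{(a)} $F$ has no zeros on $\partial\Omega$. Since $\partial\Omega$ has null Lebesgue measure, this follows directly from the last assertion of Proposition \ref{lemma_fi}.

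\textbf{(b)} Every zero of $F$ in $\Omega$ is non-degenerate. This is Lemma \ref{lemma_nd}.

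\textbf{(c)} There are only finitely many zeros in the compact set $\Omega$. By (b) and the inverse function theorem each zero is isolated. A limit point of zeros in $\Omega$ would itself be a zero, by continuity, and it could not be isolated, contradicting (b). Alternatively, (c) follows from \eqref{eq_ro1}: the expected number of zeros in $\Omega$ is $\int_\Omega\rho_1<\infty$, provided one checks that $\rho_1$ is locally integrable, and then the count is finite almost surely. The compactness argument avoids that check, so I prefer it.

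On this full-measure event, let $\zeta_1,\dots,\zeta_n$ be the zeros in the interior of $\Omega$. Choose small disjoint disks $D_k$ around each $\zeta_k$, contained in $\Omega$. The form $dF/F=d\log F$ is closed on $\Omega\setminus\{\zeta_k\}$, since $F$ is $C^1$ and non-vanishing there. Applying Green/Stokes on $\Omega$ minus the disks (a square with holes, a Lipschitz domain), the integral over $\partial\Omega$ equals $\sum_k\int_{\partial D_k}dF/F$. The expression $\frac{\partial F}{F}dz+\frac{\bar\partial F}{F}d\bar z$ is just $dF/F$ written in Wirtinger form, so $\Im$ of the integral is the total change of $\arg F$ and the real part is the change of $\log|F|$, which is zero on closed curves.

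It remains to compute $\int_{\partial D_k}dF/F=2\pi i\,\charge_{\zeta_k}$. Near $\zeta_k$ write $F(z)=DF(\zeta_k)(z-\zeta_k)+o(|z-\zeta_k|)$, with $DF(\zeta_k)$ invertible. For a small enough radius $r$, the homotopy $F_t=(1-t)F+t\,DF(\zeta_k)(\cdot-\zeta_k)$ does not vanish on $|z-\zeta_k|=r$, because the linear part has modulus at least $c\,r$ while the error is $o(r)$. The winding number is therefore that of an invertible real-linear map applied to the circle, which is $\sgn\det DF(\zeta_k)$. This can be seen by deforming the matrix within $GL_2(\mathbb R)$ to $I$ or to complex conjugation, whose windings are $+1$ and $-1$.

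The main obstacle is only bookkeeping: making sure all almost sure events hold simultaneously. This is fine since it is a finite intersection, and it uses $C^1$ regularity in place of analyticity. I would also note that $\charge_z$ is defined via $\sgn\det DF$ and relates to the winding via \eqref{eq_jacv}.
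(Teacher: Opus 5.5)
Your proposal is correct and matches the paper's proof, which invokes Poincar\'e's index formula after checking the same hypotheses you verify: $F\in C^2$, no zeros on $\partial\Omega$ (via the null-measure consequence of Proposition~\ref{lemma_fi}), and $0$ a regular value (via Lemma~\ref{lemma_nd}). The only difference is that you also sketch the standard proof of the index formula, which the paper simply cites; in that sketch, the closedness of $dF/F$ is cleanest if you rely on the assumed $C^2$ regularity, or on local $C^1$ branches of $\log F$, rather than on $C^1$ alone.
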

\begin{proof}
Poincar\'{e}'s index formula \eqref{eq_Poincare} 
is valid if (a) $F$ is twice continuously differentiable, as we assume; (b) $F$ does not vanish on $\partial \Omega$; and (c) $0$ is a regular value of $F$; see, for example, \cite{MR1487640, MR0209411}. Condition (c) holds by Lemma \ref{lemma_nd}, while condition (b) follows from Proposition \ref{lemma_fi}, since $\partial\Omega$ has null Lebesgue measure, and therefore \eqref{eq_ro1} shows that $\E \big[\#\{F=0\} \cap \partial \Omega \big]=0$. 
\end{proof}

\section{Proof of Theorem \ref{th_main}}\label{sec_proof_main}

\subsection{Excluding bad events}
Since $L \geq 1$, we can choose the constant $C>0$ in \eqref{eq_fp} so large that the bound on the failure probability becomes trivial for $\delta \geq 1/64$. Hence, we assume without loss of generality that $0<\delta<1/64$. As a consequence,
\[\sqrt{\delta} \leq \big\lceil{{\delta}^{-1/2}}\big\rceil \delta = \ccdelta \leq 2 \sqrt{\delta} \leq 1/4,\] an estimate that we will use repeatedly without further mention. We will exclude a number of events that have a small probability for small values of $\delta$. It will be convenient to use the notation
$\gamma = \cons \sigma$ where $\cons>0$ is a constant to be fixed shortly, that depends only on the twisted kernel $H$, cf. \eqref{eq_H}.

We start with the excursion bounds in Lemma \ref{lem:excu} and consider the constants $c,c',C>0$ of its statement. Firstly, we invoke Lemma \ref{lem:excu} with $s=\gamma \sqrt{\log \tfrac{1}{\delta}}$ 
and take $\cons \geq c^{-1/2}$
to conclude that
\begin{align}\label{eq_e1}
\sup_{|z| \leq 10, |w| \leq 9L}
|\partial^\alpha F_w(z)| \leq \gamma \sqrt{\log \tfrac{1}{\delta}}, \quad |\alpha|=1,2,
\end{align}
except on an event of probability at most 
\begin{align*}
9^2 C L^2 \exp\Big(c'\frac{A^2}{\sigma^2}\Big)
\exp\Big({-}c\frac{s^2}{\sigma^2}\Big)
&=9^2 C L^2 \exp\Big(c'\frac{A^2}{\sigma^2}\Big)
\exp\Big({-}c\cons^2\log\tfrac{1}{\delta}\Big)
\\
&\leq 9^2 C L^2 \exp\Big(c'\frac{A^2}{\sigma^2}\Big)
\delta.
\end{align*}
(We choose $9L$ in \eqref{eq_e1} because $Q_{L+5}(0) \subset D_{9L}(0)$.)
We use \eqref{eq_e1} to bound the error in first order Taylor expansions. By taking $\cons$ suitably large we further ensure that
\begin{align}\label{eq:lineariz}
	&\big| F_w(z+u) - F_w(u)- \partial F_w(u) z- \bar \partial F_w(u) \bar z \big| \leq \gamma \sqrt{\log \tfrac{1}{\delta}} \cdot |z|^2, \quad |z|,|u| \leq 5, |w|_\infty \leq L+5,
	\\\label{eq:lineariz2}
	&\big| T F_w(z+u) - T F_w(u)\big| \leq \gamma \sqrt{\log \tfrac{1}{\delta}} \cdot |z|, \quad T=\partial_x,\partial_y,\partial,\bar\partial,\, |z|,|u| \leq 5, |w|_\infty \leq L+5.
\end{align}
Secondly, we use Proposition \ref{prop_gridevent} with
\begin{align*}
	\alpha = \cons_0 \gamma \sqrt{\log \tfrac{1}{\delta}} \delta, \qquad
	\beta = \cons_1 \gamma^2 \log \tfrac{1}{\delta} \sqrt{\delta}
\end{align*}
and constants $\cons_0$ and $\cons_1$ to be fixed, to ensure that
\begin{equation}
	\mbox{for all }\lambda \in \gridsparseno_{L+5}:\quad
	|F_\lambda(0)| \geq \cons_0 \gamma \sqrt{\log \tfrac{1}{\delta}} \delta \quad \text{or} \quad |\det DF_\lambda(0)| \geq \cons_1 \gamma^2 \log \tfrac{1}{\delta} \sqrt{\delta}
	\label{eq:fdfjointsmall}
\end{equation}
holds, after excluding an event with probability
\[\lesssim \gamma^4 \sigma^{-4} \cons_0^2 \cons_1 L^2 (\log \tfrac{1}{\delta})^2\cdot \delta\lesssim \cons^4 \cons_0^2 \cons_1 L^2 (\log \tfrac{1}{\delta})^2\cdot \delta.\]

Finally, we exclude certain null-probability events. First, by Proposition \ref{lemma_fi}, we can exclude the null-probability event that $F$ vanishes on any grid point $\lambda \in \Lambda$ or on any point of the boundary of the test boxes, i.e., $\partial Q_{\ccdelta}(\lambda)$, $\lambda \in \Lambda_L$. This implies, in particular, that in Step 2 of the algorithm, $F_\lambda(\mu_j) \not=0$ and $\theta_\lambda$ is always defined by \eqref{eq_tl}. Second, by Lemma \ref{lemma_nd}, after excluding a null-probability event, we have
\begin{align}\label{eq_nmz}
\det DF_\lambda(z) \not=0,\mbox{ whenever }F_\lambda(z)=0\mbox{ and }\lambda \in \Lambda.
\end{align}
Third, applying Lemma \ref{le_p} to the domain $\Omega=Q_{\ccdelta}(0)$, we can exclude a null-probability event and ensure that
\eqref{eq_Poincare} holds simultaneously for all functions $F_\lambda$, $\lambda \in \grid$ in lieu of $F$.

Altogether, we exclude events with total probability $\lesssim L^2 \cdot (\log \tfrac{1}{\delta})^2 \cdot \exp\big(c' {A^2}/{\sigma^2}\big) \cdot \delta$. We now show that under the complementary event, the conclusions of the theorem hold. (To fully specify the failure probability, we need to fix the constants $\cons_0$ and $\cons_1$; for clarity, we do this throughout the proof, by imposing various absolute lower bounds on $\cons_0$ and $\cons_1$.)

\subsection{The true zeros are adequately separated}

We claim that the zero set of $F$ is $7\ccdelta=7 \sdelta \delta$ separated (wrt.\ the infinity norm):
\begin{align}\label{eq_7sep}
	\inf \big\{ |\xi-\xi'|_\infty: \xi, \xi' \in \{F=0\} \cap \domainplus,\, \xi\not=\xi'\big\} \geq 7 \sdelta \delta.
\end{align}
Suppose on the contrary that $F(\xi_1)=F(\xi_2)=0$, with $\xi_1, \xi_2 \in \domainplus$ and \[0<|\xi_1- \xi_2|_\infty < 7 \sdelta \delta.\]
We linearize $F_{\xi_1}$ around $0$ and evaluate at $\xi_2-\xi_1$:
by \eqref{eq:lineariz} with $w=\xi_1$, $u=0$ and $z=\xi_2-\xi_1$, we obtain
\begin{equation} \label{eq:1}
	\big|\partial F_{\xi_1}(0)(\xi_2-\xi_1) + \bar \partial F_{\xi_1}(0)  \overline{(\xi_2-\xi_1)}
	\big| \leq \gamma \sqrt{\log \tfrac{1}{\delta}} |\xi_2-\xi_1|^2.
\end{equation}
The use of \eqref{eq:lineariz} is valid because
$|\xi_1|_\infty \leq L+\ccdelta\leq L+1$ and $|\xi_2-\xi_1| \leq 7\sqrt{2} \ccdelta \leq 7\sqrt{2}/4 < 5$, and we also have $|F_{\xi_1}(0)|=|F(\xi_1)|=0$ and $|F_{\xi_1}(\xi_2-\xi_1)|=|F(\xi_2)|=0$.

Let $v := (\xi_2-\xi_1)/|\xi_2-\xi_1|$. Regarding $v$ as an element of $\mathbb{R}^2$, \eqref{eq:1} implies
\begin{equation}
\label{eq:absdfv}
\big\lvert DF_{\xi_1}(0) \cdot v \big\rvert
\leq \gamma \sqrt{\log \tfrac{1}{\delta}} |\xi_2-\xi_1|
\leq
7 \sqrt{2} \gamma \sqrt{\log \tfrac{1}{\delta}} \sdelta \delta.
\end{equation}
We now argue that one can move away from $\xi_1$ along the line with direction $v$, traveling a distance $\gtrsim \sqrt{\delta}$, and finishing close to the sparse grid $\gridsparse$. More precisely, we use Lemma \ref{lemma_sparse} (with $L+1$ in lieu of $L$) to
select the point  
$\xi_3 = \xi_1 +b \sqrt{\delta} v$ 
with
\begin{equation}
2 \leq b \leq 4,
\end{equation}
and $\tau \in \gridsparseno_{L+2}$ with
\begin{equation}\label{eq_newq1}
|\xi_3-\tau| \leq \delta.
\end{equation}
By \eqref{eq:lineariz}, we have
$$
|F_{\xi_1}(\xi_3-\xi_1)-\underbrace{F_{\xi_1}(0)}_{=0}- \partial F_{\xi_1}(0)(\xi_3-\xi_1)-\bar \partial F_{\xi_1}(0)\overline{(\xi_3-\xi_1)} | \leq \gamma \sqrt{ \log\tfrac{1}{\delta}} |\xi_1-\xi_3|^2
\leq  b^2 \gamma \sqrt{ \log\tfrac{1}{\delta}} \delta,
$$
which, together with \eqref{eq:absdfv}, implies
\begin{align*}
	|F_{\xi_3}(0)| 
	& =  |F_{\xi_1}(\xi_3 - \xi_1)|  
	\\
	& \leq b^2 \gamma \sqrt{ \log\tfrac{1}{\delta}} \delta 
	+ |\partial F_{\xi_1}(0)(\xi_3-\xi_1)+\bar \partial F_{\xi_1}(0)\overline{(\xi_3-\xi_1)} |
	\\
	& = b^2 \gamma \sqrt{ \log\tfrac{1}{\delta}} \delta
	+ \big\lvert b \sqrt{\delta} DF_{\xi_1}(0) \cdot v \big\rvert
	\\
	& \leq b^2 \gamma \sqrt{ \log\tfrac{1}{\delta}} \delta +
	7 \sqrt{2} b \gamma \sqrt{\log \tfrac{1}{\delta}} \sdelta \delta^{3/2}
	\\
	& \leq \big(b^2+28b \big) \gamma \sqrt{\log\tfrac{1}{\delta}} \delta
    \\
    & \leq 128 \gamma \sqrt{\log\tfrac{1}{\delta}} \delta.
\end{align*}
Recall that, by construction, $
\tau \in \gridsparseno_{L+2} \subset \Omega_{L+2}$, so we can invoke \eqref{eq:lineariz} and \eqref{eq_newq1} to obtain, 
\begin{equation} \label{eq:tau_star}
	\big| F_{\tau}(\xi_3-\tau)- F_{\tau}(0) - \partial F_\tau(0)(\xi_3-\tau)- \bar \partial F_{\tau}(0)\overline{(\xi_3-\tau)} \big| 
	\leq \gamma \sqrt{\log \tfrac{1}{\delta}} \delta^2. 
\end{equation}
By \eqref{eq_e1}, we have
\begin{align}\label{eq_a1}
|\bar\partial F_\tau(0)|,
|\partial F_\tau(0)| \leq \gamma \sqrt{\log \tfrac{1}{\delta} }.
\end{align}
On the other hand, we have just derived the estimate
\begin{align*}
	|F_{\tau}(\xi_3-\tau)|=
|F_{\xi_3}(0)|\leq 128 \gamma \sqrt{\log\tfrac{1}{\delta}} \delta.
\end{align*}
Combining this with \eqref{eq_newq1}, \eqref{eq:tau_star} and \eqref{eq_a1}, we have
\begin{align}\label{eq_m0}
\begin{aligned}
|F_{\tau}(0)| 
&\leq 128 \gamma \sqrt{\log\tfrac{1}{\delta}} \delta + 2  \gamma \sqrt{\log \tfrac{1}{\delta} } \delta + \gamma \sqrt{\log \tfrac{1}{\delta}} \delta^2
\leq 131 \gamma \sqrt{\log\tfrac{1}{\delta}} \delta,
\end{aligned}
\end{align}
where we used $\delta \leq 1$.
Next, we evaluate the directional derivative from $\tau$ towards $\xi_1$.
Note first that 
\begin{align}\label{eq_veryfa}
|\tau-\xi_1| \leq |\tau-\xi_3|+|\xi_3-\xi_1|
\leq \delta + b \sqrt{\delta} \leq 
(b+1) \sqrt{\delta} \leq 5\sqrt{\delta}.
\end{align}
By \eqref{eq:lineariz},
$$
|\underbrace{F_{\tau}(\xi_1-\tau)}_{=0} - F_{\tau}(0) - \partial F_{\tau}(0)(\xi_1-\tau)- \bar \partial F_{\tau}(0)\overline{(\xi_1-\tau)} |
\leq 25\gamma  \sqrt{\log \tfrac{1}{\delta}} \delta,
$$
which, together with \eqref{eq_m0} implies
\begin{align}\label{eq_m1}
|\partial F_{\tau}(0)(\xi_1-\tau)+ \bar \partial F_{\tau}(0)\overline{(\xi_1-\tau)}| \leq 156 \gamma  \sqrt{\log \tfrac{1}{\delta}} \delta.
\end{align}
Second, by \eqref{eq_newq1},
\[|\tau-\xi_1| \geq |\xi_3-\xi_1| - |\tau-\xi_3| \geq b \sqrt{\delta}- \delta \geq \sqrt{\delta},\]
where we used that $b \geq 2$. Set \[w:= (\tau-\xi_1)/{|\tau-\xi_1|}.\]
Then \eqref{eq_m1} implies that
\begin{align}\label{eq_m2}
	|\partial_{w} F_\tau(0)|=
|DF_{\tau}(0) \cdot w | \leq 156 \gamma  \sqrt{\log \tfrac{1}{\delta}} \sqrt{\delta}.
\end{align}
On the other hand, \eqref{eq_e1} implies that
\begin{equation} \label{eq:derbound}
	|\partial_{-iw} F_\tau(0)| \leq 2 \gamma \sqrt{ \log \tfrac{1}{\delta}}.
\end{equation}
Combining \eqref{eq_m2} with \eqref{eq:derbound} and
 the expression for $\det DF_{\tau}(0)$ in \eqref{eq_jacv}, we obtain
$$
|\det DF_{\tau}(0)| \leq 312 \gamma^2 \log \tfrac{1}{\delta} \sqrt{\delta}. 
$$

Taking into account \eqref{eq_m0} we conclude the following: under the assumption that \eqref{eq_7sep} fails, there exists a sparse grid point
$\tau \in \gridsparseno_{L+2}$ such that 
\begin{align*}
|F_{\tau}(0)| \leq C \gamma \sqrt{\log\tfrac{1}{\delta}} \delta
\quad \mbox{and}\quad
|\det DF_{\tau}(0)| \leq C \gamma^2 \log \tfrac{1}{\delta} \sqrt{\delta},
\end{align*}
where $C$ is an absolute constant. This contradicts \eqref{eq:fdfjointsmall} provided that we let the constants $\cons_0$ and $\cons_1$ be large enough.

\subsection{For the points selected in Step 1, the discretely computed change of the argument equals the actual change}

We show that for $\lambda \in \outseta$, the number $\theta_\lambda$,
defined in \eqref{eq_tl}, is
\begin{align}\label{eq_arg}
2\pi i \cdot \theta_\lambda = \int_{\partial Q_{\ccdelta}(0)}  \frac{dF_\lambda(z)}{F_\lambda(z)}.
\end{align}
Recall that we have excluded the event where $F$ vanishes on any point of $\partial Q_{\ccdelta}(\lambda)$. We use the notation of Step 2.
Let $\alpha_j:[0,1] \to \C$, $j=1,\ldots,N$ be the curve
\begin{align*}
\alpha_j(t) := F_\lambda\big(t\mu_{j} + (1-t) \mu_{j-1}\big).
\end{align*}
The concatenation $\alpha:=\alpha_1 * \ldots * \alpha_N$ is a (possibly non-injective) parametrization of $F_\lambda(\partial Q_{\ccdelta}(0))$, where $\partial Q_{\ccdelta}(0)$ is parametrized anticlockwise. Thus, $\alpha$ is a closed curve that avoids $0$ and
\begin{align*}
i\sum_{j=1}^N \Im \int_{\alpha_j} \frac{dz}{z} = i\Im \int_{\alpha} \frac{dz}{z} = 
\int_{\alpha} \frac{dz}{z}= \int_{\partial Q_{\ccdelta}(0)}  \frac{dF_\lambda(z)}{F_\lambda(z)}.
\end{align*}
Inspecting the definition \eqref{eq_tl}, it suffices to show that
\begin{align*}
\Im \int_{\alpha_j} \frac{dz}{z} = \arg \big[F_\lambda(\mu_{j}) \overline{F_\lambda(\mu_{j-1})}\big], \qquad j=1,\ldots,N.
\end{align*}
To this end, we fix $j \in \{1,\ldots,N\}$ and invoke Lemma \ref{lemma_arg} with $\tau = F_\lambda(\mu_{j-1})$. This is possible because we have excluded the event where $F_\lambda$ vanishes on $\partial Q_{\ccdelta}(0)$. It suffices to check that
\begin{align}\label{eq_its}
\Re \big[ F_\lambda(t\mu_{j} + (1-t) \mu_{j-1}) \overline{F_\lambda(\mu_{j-1})} \big] >0, \qquad t \in [0,1].
\end{align}
The segment determined by $\mu_{j-1}$ and $\mu_{j}$ is either vertical or horizontal. Without loss of generality we assume that
it is horizontal: $\Im \mu_{j-1} = \Im \mu_{j}$. Let $t \in [0,1]$. We invoke \eqref{eq:lineariz} with $w=\lambda$ and
\begin{align*}
z_t=t(\mu_{j} - \mu_{j-1}),
\end{align*}
which is allowed because $|z_t| \leq \delta \leq 1$ and $\lambda \in \Lambda_{L}$. Since $z_t \in \mathbb{R}$, the linearization estimate reads
\begin{align}\label{eq_m4}
|F_{\lambda}(\mu_{j-1}+z_t) - F_\lambda(\mu_{j-1}) - \partial_x F_\lambda(\mu_{j-1}) z_t| \leq \gamma \sqrt{\log\tfrac{1}{\delta}} |z_t|^2, \qquad t \in [0,1],
\end{align}
and, by \eqref{eq:lineariz2},
\begin{align}\label{eq_m4p}
	|\partial_x F_{\lambda}(\mu_{j-1}+u) - \partial_x F_\lambda(\mu_{j-1})| \leq \gamma \sqrt{\log\tfrac{1}{\delta}} |u|, \qquad |u|<1.
\end{align}
In the current notation, the fact that $\lambda$ passed the algorithm's selection step (Step 1) gives
\begin{align*}
2|F_{\lambda}(\mu_{j-1})-F_{\lambda}(\mu_j)| \leq |F_\lambda(\mu_{j-1})|.
\end{align*}
(Note that the points $\mu_{j-1},\mu_j$ are indeed as required in \eqref{st12}, because they are  neighboring boundary grid points, successive according to their principal arguments, and thus $|\mu_j-\mu_{j-1}|=|\mu_j-\mu_{j-1}|_\infty=\delta$; this also holds
for $j=1$ because $\mu_0=\mu_N$.)

Since $|z_1|=\delta$, the previous estimate, together with \eqref{eq_m4} gives
\begin{align}\label{eq_m5}
\begin{aligned}
\delta |\partial_x F_\lambda(\mu_{j-1})| &\leq
|F_{\lambda}(\mu_{j-1})-F_{\lambda}(\mu_j)|
+ |F_{\lambda}(\mu_j) - F_\lambda(\mu_{j-1}) - \partial_x F_\lambda(\mu_{j-1}) z_1|
\\
&\leq \frac12 |F_\lambda(\mu_{j-1})| + \gamma \sqrt{\log\tfrac{1}{\delta}} \delta^2.
\end{aligned}
\end{align}
We claim that
\begin{align}\label{eq_claim}
\frac{1}{2} \lvert F_\lambda(\mu_{j-1}) \rvert > 2 \gamma \sqrt{ \log \tfrac{1}{\delta}} \delta^2.
\end{align} 
Suppose not, that is,
\begin{align}\label{eq_supnot}
\lvert F_{\lambda}(\mu_{j-1}) \rvert \leq 4 \gamma \sqrt{ \log \tfrac{1}{\delta}} \delta^2.
\end{align}
Then \eqref{eq_m5} yields
$|\partial_x F_\lambda(\mu_{j-1})| \leq 3 \gamma \sqrt{\log\tfrac{1}{\delta}} \delta$. By Lemma \ref{lem_dervery} and \eqref{eq_supnot},
\begin{align}
|\partial_x F_{\lambda+\mu_{j-1}}(0)| &\leq | \mu_{j-1}|_\infty \cdot |F_{\lambda}(\mu_{j-1})| + 
|\partial_x F_{\lambda}(\mu_{j-1})|
\\
&\leq \ccdelta 4 \gamma \sqrt{ \log \tfrac{1}{\delta}} \delta^2 + 3 \gamma \sqrt{\log\tfrac{1}{\delta}} \delta \leq 7 \gamma \sqrt{\log\tfrac{1}{\delta}} \delta,
\end{align}
while, by \eqref{eq_e1},
$|\partial_y F_{\lambda+\mu_{j-1}}(0)| \leq \gamma \sqrt{\log\tfrac{1}{\delta}}$.
Consequently
\begin{align}\label{eq_contra1}
|\det DF_{\lambda+\mu_{j-1}}(0)| \leq |\partial_x F_{\lambda+\mu_{j-1}}(0)| \cdot |\partial_y F_{\lambda+\mu_{j-1}}(0)|
\leq 7 \gamma^2 {\log\tfrac{1}{\delta}} {\delta}
\leq 7 \gamma^2 {\log\tfrac{1}{\delta}} {\sqrt{\delta}},
\end{align}
cf. \eqref{eq_jacv}. In addition, using again \eqref{eq_supnot},
\begin{align}\label{eq_contra2}
\lvert F_{\lambda+\mu_{j-1}}(0) \rvert 
=\lvert F_{\lambda}(\mu_{j-1}) \rvert \leq 4 \gamma \sqrt{ \log \tfrac{1}{\delta}} \delta^2 
 \leq 4 \gamma \sqrt{ \log \tfrac{1}{\delta}} \delta. 
\end{align}
Since $\lambda \in \outseta \subset \cgrid_L$, $\mu_{j-1} \in  \grid$,
and $|\mu_{j-1}|_\infty=\ccdelta$, we have that
$\lambda+\mu_{j-1} \in \gridsparseno_{L+5}$, cf. \eqref{eq_gs}.
Thus, the combination of \eqref{eq_contra1} and \eqref{eq_contra2} contradicts \eqref{eq:fdfjointsmall} provided that the constants $\cons_0$ and $\cons_1$ are chosen large enough. 
(Note that in the last bounds we gave up factors of order $\sqrt{\delta}$ and $\delta$; however other parts of our argument do not allow us to make use of this.)

Hence we conclude that \eqref{eq_claim} holds. We now combine
\eqref{eq_m4}, \eqref{eq_m5} and \eqref{eq_claim} to estimate 
\begin{align*}
|F_{\lambda}(\mu_{j-1}+z_t) - F_\lambda(\mu_{j-1})| &\leq |F_{\lambda}(\mu_{j-1}+z_t) - F_\lambda(\mu_{j-1}) - \partial_x F_\lambda(\mu_{j-1}) z_t| + |z_t| |\partial_x F_\lambda(\mu_{j-1})| 
\\
&\leq \gamma \sqrt{\log\tfrac{1}{\delta}} |z_t|^2 + \delta |\partial_x F_\lambda(\mu_{j-1})| 
\\
&\leq \gamma \sqrt{\log\tfrac{1}{\delta}} \delta^2 + \frac12 |F_\lambda(\mu_{j-1})| + \gamma \sqrt{\log\tfrac{1}{\delta}} \delta^2
\\
&<|F_\lambda(\mu_{j-1})|.
\end{align*}
 Taking squares yields
\begin{align*}
|F_\lambda(\mu_{j-1})|^2 &> |F_{\lambda}(\mu_{j-1}+z_t) - F_\lambda(\mu_{j-1})| ^2 
\\
&= |F_{\lambda}(\mu_{j-1}+z_t)|^2 + |F_\lambda(\mu_{j-1})|^2 - 2 \Re \big[ F_{\lambda}(\mu_{j-1}+z_t) \overline{F_{\lambda}(\mu_{j-1})}\big],
\end{align*}
and so $\Re \big[ F_{\lambda}(\mu_{j-1}+z_t) \overline{F_{\lambda}(\mu_{j-1})}\big] > \frac12 |F_\lambda(\mu_{j-1}+z_t)|^2>0$, because we $F_\lambda$ does not vanish on $\partial Q_{\ccdelta}(\lambda)$. This gives \eqref{eq_its} and concludes the proof of \eqref{eq_arg}.

\subsection{Each true zero is close to a grid point selected in Step 1}

We show that 
\begin{align}\label{claim1}
	\{F=0\} \cap \domain \subset \outseta + Q_{\sdeltaa \delta}(0)\,.
\end{align}
Let $\zeta \in \{F=0\} \cap \domain$ and $\lambda \in \cgrid_L$ be such that $|\lambda-\zeta|_\infty \leq \sdeltaa \delta= \frac12 \delta^*$ 
(The assumption that $L/\cdelta \in \mathbb{N}$ is important here, so as to be able to choose an adequate $\lambda$ with $|\lambda|_\infty\leq L$, even if $\zeta$ is close to the boundary of $\Omega_L$.)
Let us show that $\lambda$ passes the test in Step 1. Specifically, let $\mu,\mu' \in \Lambda$ with \eqref{st12} and let us show that \eqref{st11} holds.
Note that $\lambda+\mu, \lambda+\mu' \in \gridsparseno_{L+1}$.

As for the right-hand side of \eqref{st11}, by \eqref{eq_e1}, we have
\begin{align}\label{eq_b1}
\begin{aligned}
|F_\lambda(\mu)-F_\lambda(\mu')| &\leq 2\max\Big\{\sup_{|w|\leq 1} |\partial F_\lambda(w)|, \sup_{|w|\leq 1} |\overline{\partial} F_\lambda(w)|\Big\} |\mu-\mu'|
\\
&\leq 2 \gamma \delta \sqrt{\log\tfrac1\delta}.
\end{aligned}
\end{align}
To prove \eqref{st11}, we distinguish two cases.

Suppose first that $|\det DF_{\lambda+\mu}(0)| < \cons_1 \sqrt{\delta} \gamma^2 \log \tfrac{1}{\delta}$. Since $\lambda+\mu \in \gridsparseno_{L+1}$, by \eqref{eq:fdfjointsmall},
\begin{align*}
	|F_\lambda(\mu)|=|F_{\lambda+\mu}(0)| \geq \cons_0 \gamma \delta \sqrt{\log\tfrac1\delta}.
\end{align*}
Comparing this to \eqref{eq_b1} we obtain
\begin{align*}
|F_\lambda(\mu)| \geq \tfrac{1}{2} \cons_0|F_\lambda(\mu)-F_\lambda(\mu')|.
\end{align*}
Thus \eqref{st11} holds, provided that $\cons_0 \geq 4$.

Suppose now that 
\begin{align}\label{supnow}
|\det DF_{\lambda+\mu}(0)| \geq \cons_1 \sqrt{\delta} \gamma^2 \log \tfrac{1}{\delta},
\end{align}
and consider the following estimates on the displacement $\zeta-(\lambda+\mu)$:
\begin{align}\label{r1}
|\zeta-(\lambda+\mu)| \leq |\mu| + |\zeta-\lambda|
\leq \sqrt{2} \sdelta \delta + \sqrt{2} 
\lceil \delta^{-1/2}/4 \rceil \delta
\leq \Big(2 \sqrt{2}+\tfrac{\sqrt{2}}{2}\Big) \sqrt{\delta},
\\\label{r2}
|\zeta-(\lambda+\mu)| \geq |\mu| - |\zeta-\lambda|
\geq \sdelta \delta - \tfrac{\sqrt{2}}{2} \delta^{1/2} 
\geq \Big(1-\tfrac{\sqrt{2}}{2}\Big) \sqrt{\delta}.
\end{align}
(Here, we used that
$\lceil \delta^{-1/2}/4 \rceil \leq \delta^{-1/2}/4 +1 \leq \delta^{-1/2}/2$, because $\delta < 1/16$.)

Let $v := [\zeta-(\lambda+\mu)] / |\zeta-(\lambda+\mu)|$.
The expression for $\det DF_{\lambda+\mu}(0)$ in \eqref{eq_jacv}, together with \eqref{supnow} 
and \eqref{eq_e1} give
\begin{align*}
	\cons_1 \sqrt{\delta} \gamma^2 \log \tfrac{1}{\delta} &\leq
|\det DF_{\lambda+\mu}(0)| \leq
|\partial_v F_{\lambda+\mu}(0)| \cdot |{\partial_{-iv}F_{\lambda+\mu}(0)}|
\\
&\leq C^{-1} \gamma \sqrt{\log\tfrac1\delta} \cdot |\partial_v F_{\lambda+\mu}(0)|,
\end{align*}
for an absolute constant $C>0$. Consequently
\begin{align}\label{pl}
|\partial_v F_{\lambda+\mu}(0)| \geq C \gamma \cons_1 \sqrt{\delta} \sqrt{\log\tfrac1\delta}.
\end{align}
On the other hand, since $|F_{\lambda+\mu}(\zeta-(\lambda+\mu))|=|F(\zeta)|=0$,
\eqref{eq:lineariz} gives
\begin{align*}
	|F_\lambda(\mu)|&=|F_{\lambda+\mu}(0)| =
	|F_{\lambda+\mu}(0) - F_{\lambda+\mu}(\zeta-(\lambda+\mu))|
	\\
	&\geq 
    \big|\partial F_{\lambda+\mu}(0) (\zeta-(\lambda+\mu))+\bar\partial F_{\lambda+\mu}(0) \overline{(\zeta-(\lambda+\mu))}\big| 
    - \gamma  \sqrt{\log\tfrac1\delta}\cdot \big|\zeta-(\lambda+\mu)\big|^2
	\\
	&=\big|\partial_v F_{\lambda+\mu}(0) \big|\cdot \big|\zeta-(\lambda+\mu)\big| - \gamma  \sqrt{\log\tfrac1\delta} \cdot \big|\zeta-(\lambda+\mu)\big|^2.
\end{align*}
Further applying \eqref{pl}, \eqref{r1} and \eqref{r2} gives
\begin{align*}
|F_\lambda(\mu)| &\geq C \gamma \cons_1 \sqrt{\delta} \sqrt{\log\tfrac1\delta }\big(1-\tfrac{\sqrt{2}}{2}\big) \sqrt{\delta} - \gamma  \sqrt{\log\tfrac1\delta} \cdot \big(2 \sqrt{2}+\tfrac{\sqrt{2}}{2}\big)^2 \delta
\\
&= \big[C \cons_1 \big(1-\tfrac{\sqrt{2}}{2}\big)
-\big(2 \sqrt{2}+\tfrac{\sqrt{2}}{2}\big)^2 \big]\gamma
\sqrt{\log\tfrac1\delta}\delta.
\end{align*}
Comparing this to \eqref{eq_b1} we obtain
\begin{align*}
	|F_\lambda(\mu)| \geq \tfrac{1}{2}\big[C \cons_1 \big(1-\tfrac{\sqrt{2}}{2}\big)
	-\big(2 \sqrt{2}+\tfrac{\sqrt{2}}{2}\big)^2 \big] |F_\lambda(\mu)-F_\lambda(\mu')|.
\end{align*}
Therefore, \eqref{st11} holds, provided that $\cons_1$ is chosen large enough.

In summary, we conclude that $\lambda \in \outseta$ and, since $|\lambda-\zeta|_\infty \leq \sdeltaa \delta$, we have that $\zeta \in \outseta + Q_{\sdeltaa \delta}(0)$. Hence, \eqref{claim1} holds, as claimed.

\subsection{Winding numbers compute charges}\label{sec_wn}
We show the following: for every $\zeta \in \{F=0\} \cap \domain$ and every $\lambda \in \outseta$ with
$|\lambda - \zeta|_\infty \leq \ccdelta$ we have
\begin{align}\label{isprecisely}
\theta_\lambda = \charge_\zeta \in \{-1,1\}.
\end{align}
Let $\zeta \in \{F=0\} \cap \domain$ and $\lambda \in \outseta$ with
$|\lambda - \zeta|_\infty \leq \ccdelta$. Note that we must actually have $|\lambda - \zeta|_\infty < \ccdelta$ because we have excluded the event where $F$ vanishes on $\partial Q_{\ccdelta}(\lambda)$.

The box $Q_{\ccdelta}(0)$ contains at least one zero of $F_\lambda$, namely $\zeta-\lambda$, and no other zeros due to \eqref{eq_7sep}, as, indeed, $\mathrm{diam} \,Q_{\ccdelta}(\lambda) = 2\sqrt{2} \sdelta\delta < 7 \sdelta\delta$ and $Q_{\ccdelta}(\lambda)\subset \Omega_{L+\sdelta\delta}$. In addition, recall that we have excluded the event that $F_\lambda$ has multiple zeros, cf. \eqref{eq_nmz}.
Therefore, by Poincar\'{e}'s index formula \eqref{eq_Poincare}, the winding number satisfies
\begin{align}\label{eq_wn}
\frac{1}{2\pi i}\int_{\partial Q_{\ccdelta}(0)} \frac{dF_\lambda}
{F_\lambda} =
\sgn \det D F_\lambda (\zeta-\lambda) \in \{-1,1\}.
\end{align}
On the other hand, since $\lambda \in \outseta$, \eqref{eq_arg} shows that winding number \eqref{eq_wn} is precisely $\theta_\lambda$. 
By definition,
$\charge_\zeta = \sgn \det D F(\zeta)$. To complete the proof of \eqref{isprecisely},
we show that
$ \det D F_\lambda (\zeta-\lambda)= \det D F (\zeta)$.

Denoting $\lambda=a+ib$ and $z=x+iy$, then $F_\lambda(z)=e^{i(bx-ay)}F(z+\lambda)$ and
\begin{align*}
\partial_x F_\lambda(z) &= e^{i(bx-ay)} \big[ 
\partial_x F(z+\lambda)+
ib F(z+\lambda)\big],
\\
\partial_y F_\lambda(z) &= e^{i(bx-ay)} \big[ \partial_y F(z+\lambda)-ia F(z+\lambda)\big].
\end{align*}
Thus, using the expression in \eqref{eq_jacv},
\begin{align*}
\det D F_\lambda(\zeta-\lambda)=
-\Im \big[\partial_x F_\lambda(\zeta-\lambda) \cdot \overline{\partial_y F_\lambda(\zeta-\lambda)}\big]
=-\Im \big[\partial_x F(\zeta) \cdot \overline{\partial_y F(\zeta)}\big]=\det DF(\zeta),
\end{align*}
as claimed.

\subsection{After Step 2, each true zero is close to a computed zero}

We show that 
\begin{align}\label{claim2}
\{F=0\} \cap \domain \subset \outsetb + 
Q_{\sdeltaa \delta}(0).
\end{align}
Let $\zeta \in \{F=0\} \cap \domain$. By \eqref{claim1}, there exists $\lambda \in \outseta$ such that $|\lambda-\zeta|_\infty \leq \sdeltaa \delta$.
By \eqref{isprecisely}, $\theta_\lambda \not= 0$. This means that $\lambda \in \outsetb$ and shows that $\zeta \in  Q_{\sdeltaa \delta }(\lambda) \subset \outsetb + 
Q_{\sdeltaa \delta}(0)$.

\subsection{After Step 2, each computed zero is close to a true zero}
We show that 
\begin{align}\label{claim3}
	\outsetb \subset \{F=0\} + Q_{\sdelta\delta}(0).
\end{align}
Let $\lambda \in \outsetb$. This means that $\lambda \in \outseta$ and $\theta_\lambda \not= 0$. By \eqref{eq_arg}, the winding number \eqref{eq_wn} is non-zero. So, 
by Poincar\'{e}'s index formula \eqref{eq_Poincare},
$F_\lambda$ must vanish at some point $\zeta \in Q_{\ccdelta}(0)=Q_{\sdelta\delta}(0)$. Since  $\lambda+\zeta \in \{F=0\}$, this proves \eqref{claim3}.

\subsection{Definition of the parametrization}
We now look into the sieving step and analyze the final output set $\outset$. The following argument is similar to the one in \cite{efkr24} and is repeated for completeness.

Given $\zeta \in \{F=0\} \cap \domain$ we claim that there exists $\lambda \in \outset$ such that $|\zeta - \lambda|_\infty \leq \ccdelta$. Suppose to the contrary that
\begin{align}\label{eq_l1}
|\zeta - \mu|_\infty > \ccdelta, \qquad \mu \in \outset.
\end{align}
By \eqref{claim2}, there exists $\lambda \in \outsetb$ such that
$|\zeta-\lambda|_\infty \leq 
\sdeltaa \delta \leq \ccdelta$. By \eqref{eq_l1}, $\outset \subsetneq \outset \cup \{\lambda\}$. We claim that $\outset \cup \{\lambda\}$ is $5\ccdelta$-separated with respect to $|\cdot|_\infty$. Since, by construction, $\outset$ is $5\ccdelta$-separated with respect to $|\cdot|_\infty$, it is enough to check that
\begin{align*}
|\mu-\lambda|_\infty \geq 5\ccdelta, \qquad \mu \in \outset.
\end{align*}
If $\mu \in \outset$, by \eqref{claim3}, there exist $\zeta' \in \{F=0\}$ such that $|\zeta'-\mu|_\infty \leq \ccdelta$. If $\zeta'=\zeta$, then $|\zeta-\mu|_\infty \leq \ccdelta$, contradicting \eqref{eq_l1}. Thus $\zeta \not= \zeta'$, while, $\zeta' \in \outset+Q_{\ccdelta}(0) \subset \Omega_{L+\ccdelta}$. Hence, we are allowed to use \eqref{eq_7sep} to conclude that
\begin{align*}
|\mu - \lambda|_\infty \geq |\zeta-\zeta'|_\infty - |\lambda-\zeta|_\infty - |\mu-\zeta'|_\infty \geq 7 \ccdelta - \ccdelta - \ccdelta = 5\ccdelta.
\end{align*}
In conclusion, the set $\outset \cup \{\lambda\}$ is $5\ccdelta$-separated:
\begin{align*}
\inf \big\{ |\mu-\mu'|_\infty : \mu, \mu' \in \outset \cup \{\lambda\}, \mu \not= \mu' \big\} \geq 5  \ccdelta,
\end{align*}
which contradicts the maximality of $\outset$. Thus, a point $\lambda \in \outset$ such that $|\zeta - \lambda|_\infty \leq \ccdelta$ must indeed exist. We choose any such point, and define  $\map(\zeta) = \lambda$.

\subsection{Properties of the parametrization}

By construction, the map $\map$ satisfies 
\begin{align}\label{eq_dist2}
|\map(\zeta)-\zeta|_\infty \leq \ccdelta \leq 2 \sqrt{\delta},
\end{align}
and therefore \eqref{eq_dist} is proved.

Let us show that $\map: \{F=0\} \cap \Omega_L \to \outset$ is injective. Assume that $\map(\zeta)=\map(\zeta')$. Then, by \eqref{eq_dist2},
\begin{align*}
|\zeta-\zeta'|_\infty \leq |\map(\zeta) - \zeta|_ \infty + |\map(\zeta') - \zeta'|_ \infty \leq 2 \ccdelta,
\end{align*}
so the separation bound \eqref{eq_7sep} implies that $\zeta=\zeta'$.

We now show that each numerical zero that is away from the boundary is in the image of $\map$. Assume that $\lambda \in \outset \cap \domainminus$ and use \eqref{claim3} to select a zero $\zeta \in \{F=0\}$ such that $|\zeta-\lambda|_\infty \leq \ccdelta$. Then 
$\zeta \in \domain$, and, by \eqref{eq_dist2},
\begin{align}
|\map(\zeta) - \lambda|_\infty \leq |\map(\zeta) - \zeta|_\infty + |\zeta-\lambda|_\infty \leq 2 \ccdelta.
\end{align}
As $\lambda, \map(\zeta) \in \outset$ and $\outset$ is $5\ccdelta$-separated 
with respect to $|\cdot|_\infty$
by construction, cf. \eqref{eq_5sep}, we conclude that $\lambda=\map(\zeta)$.

Finally, we note that winding numbers are accurately computed. Given $\zeta \in \{F=0\}\cap\Omega_L$, we denote $\lambda := \map(\zeta)$. By \eqref{eq_dist2}, $\zeta$ and $\lambda$ are as in the claim in Section \ref{sec_wn}, which gives \eqref{wellcomp}. 

This concludes the proof of Theorem \ref{th_main}. \qed

\section{Numerical Evaluation}
\label{sec_num}
We proceed to test \talgo\, on synthetic signals and assess its effectiveness in computing zeros of the short-time Fourier transform. We also illustrate numerically the advantage of using twisted shifts in this application, and provide numerical support for Theorem \ref{th_main}.

\subsection{Simulation of the input model}
The first step is to simulate the grid values of a function from the input model described in Section \ref{sec_input_model}. In principle, this could be done by simulating a large Gaussian vector corresponding to all the samples of the input function on the finite grid \eqref{eq_cgrid}. To speed up computations, we introduce a fast method that amounts to an approximate low-rank factorization of the corresponding covariance matrix. Concretely, we consider the relation \eqref{eq_FF} and discretize the signal $f=f^1+\sigma f^0$, with $f^1\colon \mathbb{R}\to\mathbb{C}$, and $f^0$ complex Gaussian white noise. We approximate
\begin{align*}
    F(\sqrt{\pi}u-i\sqrt{\pi}v) 
    & = 
    e^{i \pi u v} \int_{\mathbb{R}}f(t)\overline{g(t-u)} e^{-2 \pi i v t} \, dt
    \\
    & \approx
    e^{i \pi u v} \sum_{t\in \mathbb{Z}} \int_{[t\delta,(t+1)\delta)}f(s) \, ds \cdot \overline{g(\delta t-u)} e^{-2 \pi i v \delta t}
    \\
    & \approx
    e^{i \pi u v} \sum_{t\in \mathbb{Z}} [\delta f^1(t\delta) +  \sigma \sqrt{\delta} \xi_t]
    \cdot \overline{g(\delta t-u)} e^{-2 \pi i v \delta t}    
    \,,
\end{align*}
where $\xi_t, t\in\mathbb{Z}$ are independent standard complex normal variables. Second, we fix a domain length $L$, a lattice spacing $\delta$ with $L/\delta \in \mathbb{N}$, and let $g^L$ be the $2L$ periodic function that agrees with $g$ on $[-L,L]$. We discuss how to compute
$F$ at a lattice point $\lambda = \delta k + i \delta j \in \grid_L$ --- of course, when applying this to test the different algorithms to compute $\{F=0\} \cap \domain$, we may need to take $L$ slightly larger to account for boundary effects. Set $u=\tfrac{\delta k}{\sqrt{\pi}}$, $v=\tfrac{-\delta j}{\sqrt{\pi}}$ above and let us further approximate
\begin{align}
F(\delta k + i \delta j)
&\approx e^{-i \delta^2 k j} \sum_{t\in \mathbb{Z}} [\delta f^1(t\delta) +  \sigma \sqrt{\delta} \xi_t]
    \cdot \overline{g(\delta (t-k/\sqrt{\pi}))} e^{2 \sqrt{\pi} i \delta^2 jt} 
    \\
&\approx
e^{-i \delta^2 k j} 
\sum_{t=0}^{2L/\delta-1} [\delta f^1(t\delta) +  \sigma \sqrt{\delta} \xi_t]
    \cdot \overline{g^L(\delta (t-k/\sqrt{\pi}))} e^{2 \sqrt{\pi} i \delta^2 jt} =: c_{j,k}.
\end{align}
When $g$ decays fast and $L/\delta$ is moderately large, the effect of the last truncation is very mild (and standard practice in Gabor analysis). To approximately simulate input functions $F$ at grid points $\lambda=\delta k + i\delta j$, we simply simulate the vector $(c_{j,k})_{j,k=-L/\delta,\ldots,L/\delta-1}$. See \cite[Section 5.1]{efkr24} for additional support for these approximations. We carried out the computations with GNU Octave \cite{octave} and the LTFAT toolbox \cite{ltfat}. Code reproducing the experiments can be accessed at \url{https://github.com/gkoliander/phase_jumps}.

\subsection{Experiments}
First, we consider the setting of the STFT of complex white noise using as window the Hermite function of index 1, $g(t) = t e^{-\pi t^2}$, (normalization is immaterial for our considerations). For this window, the expected charge and number of zeros is available in \cite[Corollary~1.10 and 1.13]{hkr22}, which provides a useful benchmark. Our experiment shows that, despite missing theoretical guarantees, the \talgo\, algorithm performs better than the \calgo\, algorithm for larger values of $\delta$, and gives satisfactory results already at moderate data resolution.
In a second example, we show that also when adding a deterministic signal, the simulations using \talgo\, conform with the theoretical predictions of the expected charge (Proposition \ref{lemma_fi}).

As a third example, we illustrate the
practical role of the phase stabilizing factors in \algo: although, in 
theory, the criteria to compute zeros and charges are unaffected by the multiplicative phase term in \eqref{eq_ts}, 
we illustrate that, in the context of the short-time Fourier transform sampled on grids, rapid phase changes render the non-stabilized version of \algo\, impractical.

\subsection{Experiment 1}
We use \talgo\, and \calgo\, to calculate the zero set and charges of $100$ realizations of the STFT of complex white noise, using the window function $g(t) = t e^{-\pi t^2}$,
and simulated as explained above.
The simulations are performed with a small value of the grid spacing parameter $\delta_{\textrm{min}}=2^{-10}$ on the square $[-4,4]^2$, but only zeros in $[-2,2]^2$ are used, so as to minimize boundary effects. 
To analyze the dependency on $\delta$, we subsample the fine grid using $\delta = 2^n \delta_{\textrm{min}}$ for $n=0, \dots, 6$.
In Fig.~\ref{fig:empzeros}, we show the computed charge and number of zeros for selected values of $\delta$ within an ever increasing domain and compare the results with ensemble averages (theoretically expected charge and number of zeros). While for the minimum spacing $\delta=2^{-10}$ there is no qualitative difference between the two algorithms, differences in performance become clear at lower data resolution. \talgo\, performs well with a grid spacing of at least $\delta = 2^{-4}$, whereas \calgo\, 
performs much more poorly with $\delta = 2^{-7}$. A more detailed post-hoc analysis showed that this is mainly due to Step 1, where \calgo\, excludes too many zeros.
\begin{figure*}[tbp]
    \centering
    \begin{subfigure}[b]{0.47\textwidth}
    \includegraphics[width=0.98\textwidth]{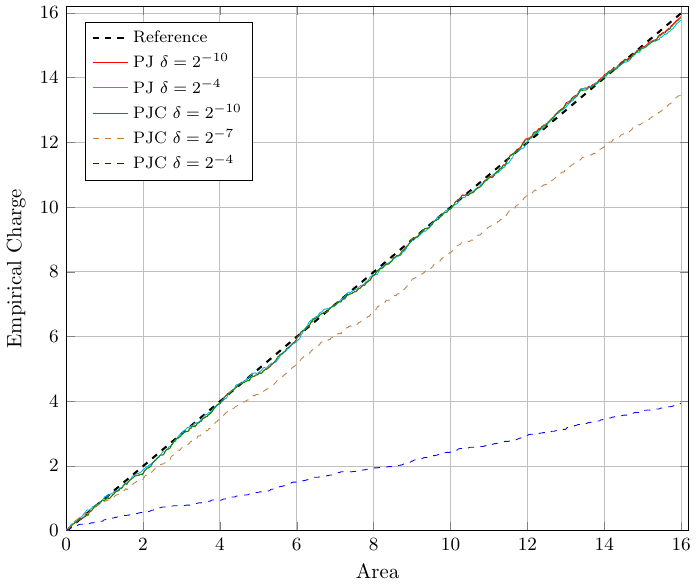}
    \end{subfigure}
    \hfill
    \begin{subfigure}[b]{0.47\textwidth}
    \centering
    \includegraphics[width=0.98\textwidth]{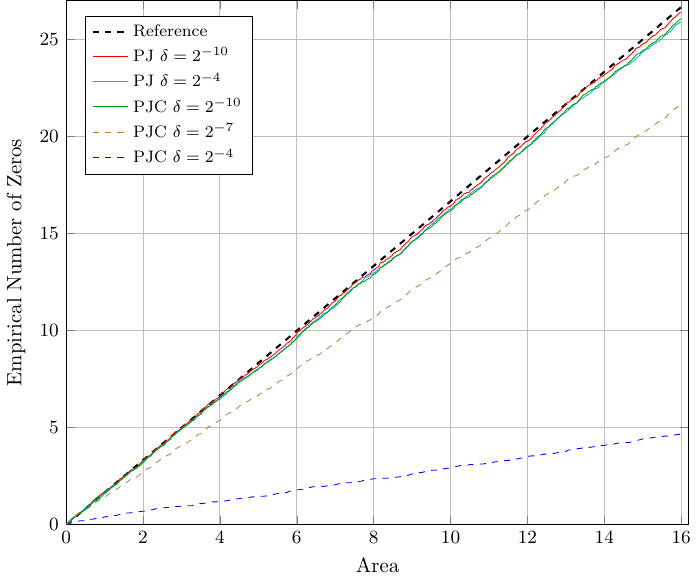}
    \end{subfigure}
    \caption{Empirical charge (left) and number of zeros (right) based on 100 simulations of the STFT of complex white noise with Hermite window $g(t) = t e^{-\pi t^2}$, using the algorithms \talgo\, (PJ) and \calgo\, (PJC) for different values of $\delta$.}
    \label{fig:empzeros}
\end{figure*}

\subsection{Experiment 2}
We use \talgo\, to calculate the zero set and charges of $500$ realizations of the STFT of the signal
$f^1(t) = 0.2\, e^{-\pi t^2}$ embedded in complex white noise, with window $g(t) = t e^{-\pi t^2}$. The simulations are performed with a  grid spacing parameter $\delta=2^{-5}$ on the square $[-4,4]^2$, but only zeros in $[-2,2]^2$ are used, so as to minimize boundary effects. 
Fig.~\ref{fig:empsigcharge} compares the observed charge within an ever increasing counting box to ensemble averages
provided by Proposition~\ref{lemma_fi}. In contrast to Experiment 1, due to the presence of a signal, the zero pattern is not stationary, and the particular choice of the counting boxes matters. We choose test boxes of increasing size
in a spiral pattern from the center, so as to first capture the domain that is heavily affected by the deterministic signal component. We expect to see a 
quick drop of the empirical charge to $-1$, corresponding to a negatively charged almost deterministic zero at the center, and then a smooth transition
to the full charge on $[-2,2]^2$, which is similar to the one obtained with no signal. The empirical curve follows the benchmark provided by Proposition~\ref{lemma_fi} very closely.
\begin{figure*}[tbp]
    \centering
    \begin{subfigure}[b]{0.47\textwidth}
    \includegraphics[width=0.98\textwidth]{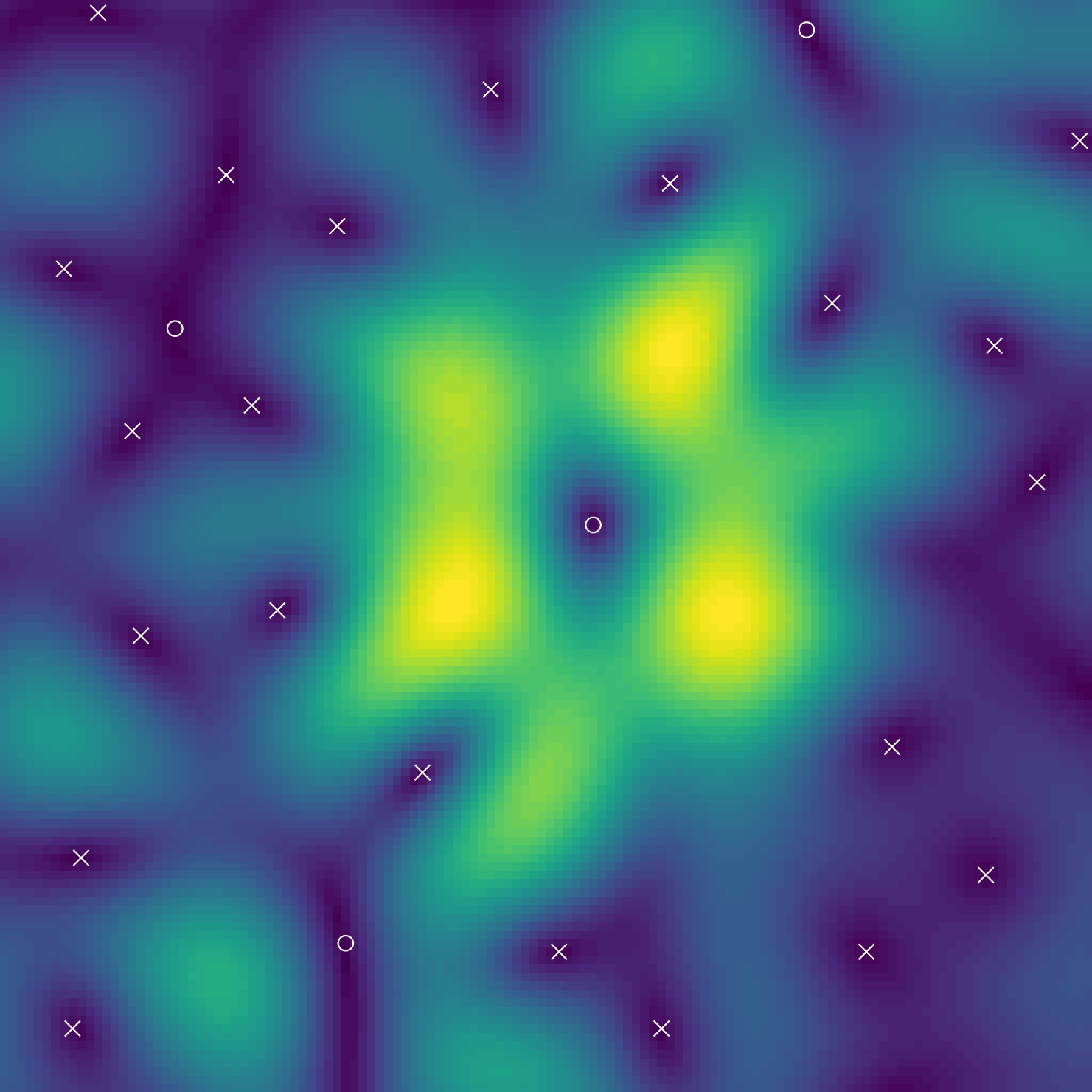}
    \end{subfigure}
    \hfill
    \begin{subfigure}[b]{0.47\textwidth}
    \centering
    \includegraphics[width=0.98\textwidth]{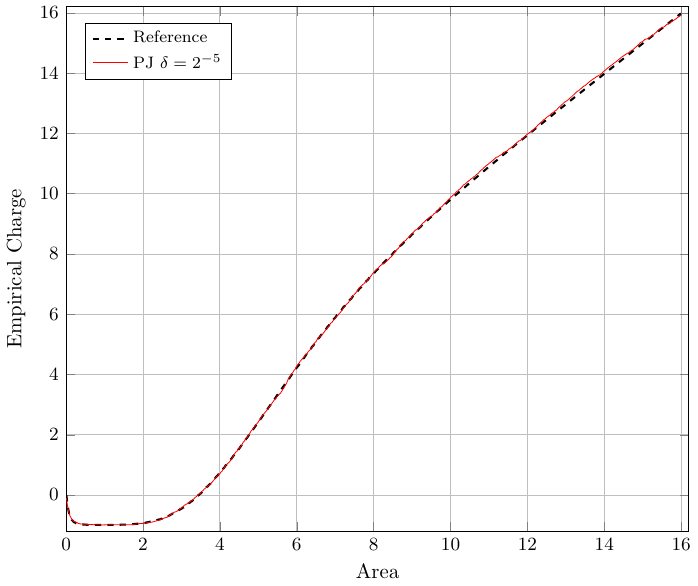}
    \end{subfigure}
    \caption{
    \textit{Left}: One simulation of the STFT of a Gaussian signal in additive complex white noise using the Hermite function of order 1 as window 
    (modulus with positive charges marked by $\times$ and negative charges marked by $\circ$). 
    \textit{Right}:  Empirical charge based on 500 simulations on $[-2,2]^2$ using the algorithms \talgo\, (PJ) with $\delta=2^{-5}$.}
    \label{fig:empsigcharge}
\end{figure*}

\subsection{Experiment 3}
We illustrate the importance of 
the phase-stabilization factors in applications to the short-time Fourier transform. Fig.~\ref{fig:phase} shows the magnitude and phase of a single simulation of the STFT of complex white noise with window $g(t)=te^{-\pi t^2}$. We note that phase varies very rapidly away from the origin, which could complicate computing winding numbers and make the detection of zeros less reliable.
\begin{figure*}[tbp]
    \centering
    \begin{subfigure}[b]{0.47\textwidth}
    \includegraphics[width=0.98\textwidth]{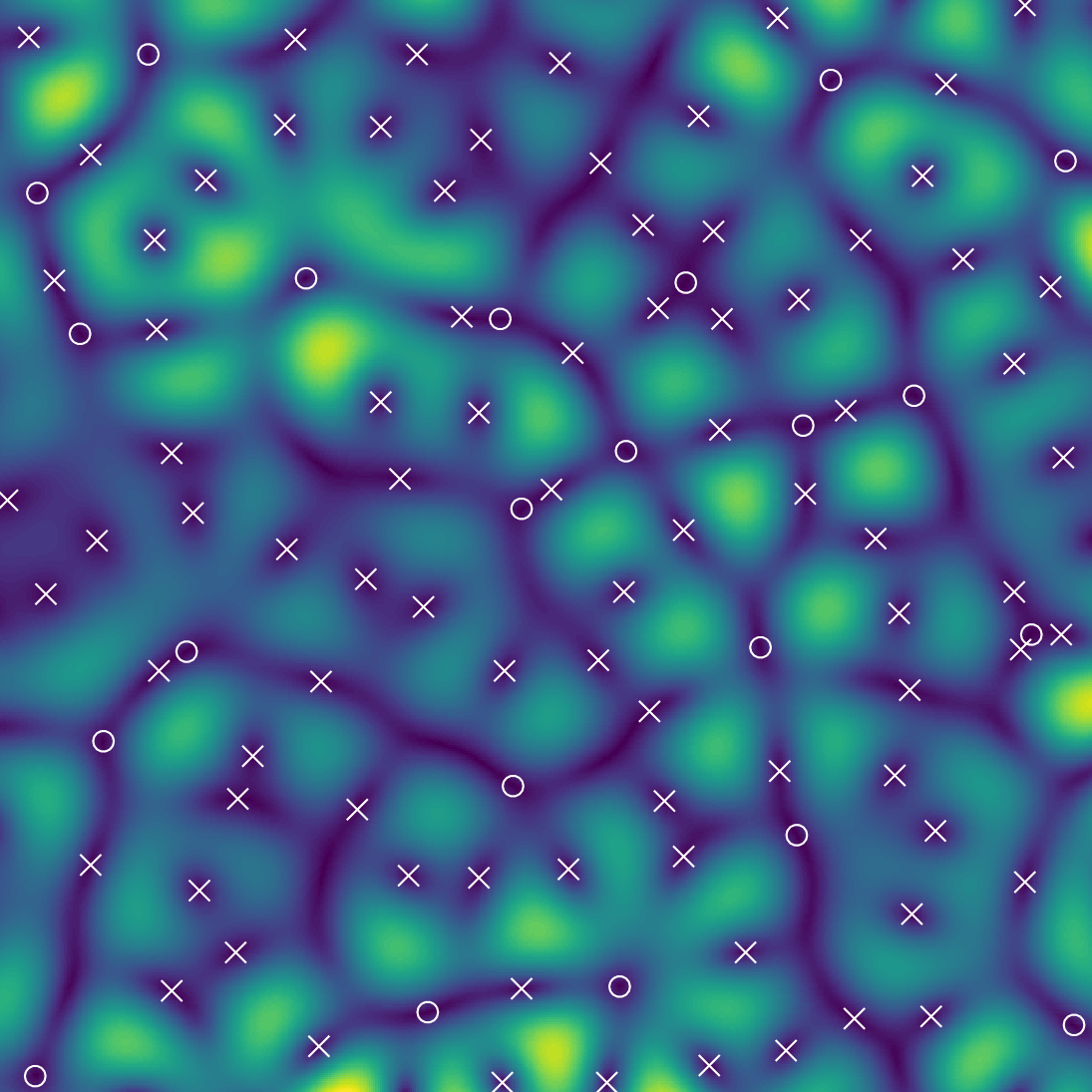}
    \end{subfigure}
    \hfill
    \begin{subfigure}[b]{0.47\textwidth}
    \centering
    \includegraphics[width=0.98\textwidth]{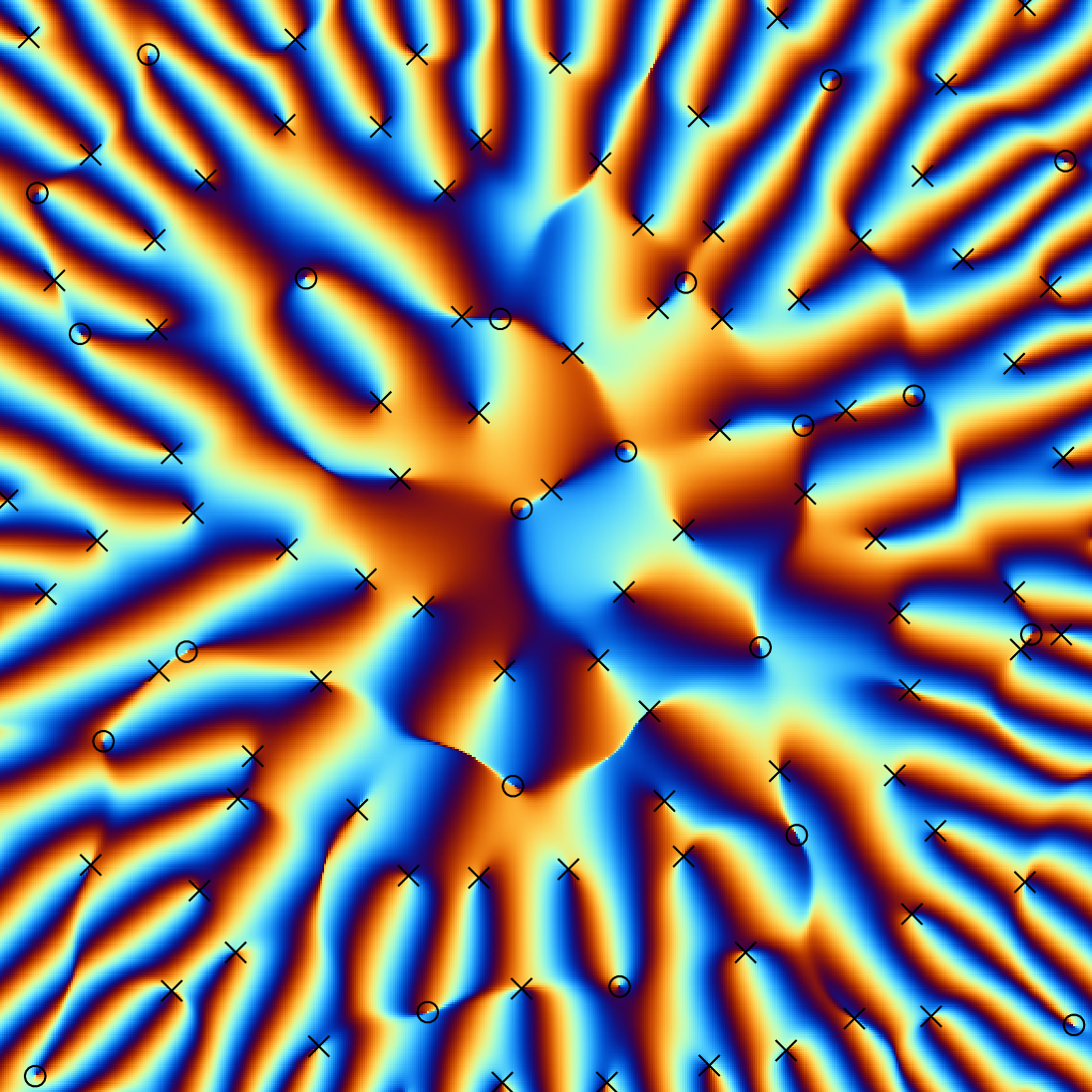}
    \end{subfigure}
    \caption{A simulation of the STFT of complex white noise using the Hermite function of order 1 as window (\textit{Left}: Modulus with positive charges marked by $\times$ and negative charges marked by $\circ$; 
    \textit{Right}: Phase). While the modulus is stationary, the phase varies strongly away from the origin (here at the center).}
    \label{fig:phase}
\end{figure*}
To formalize and quantify this intuition, we compare the performance of the Euclidean stationary and the twisted version of \algo\, on STFT data (which use, respectively, Euclidean or twisted shifts in \eqref{eq_ps}). 
We simulate $5$ realizations of
the STFT of complex white noise with window $g(t) = t e^{-\pi t^2}$  on the square $[-32,32]^2$ and with grid spacing $\delta = 2^{-5}$. As before, we only use the zeros in the smaller domain $[-16,16]^2$, so as to minimize boundary effects. We count zeros and charges on boxes 
of increasing area following a spiral pattern from the center. We then expect the stationary and twisted versions of the algorithm to perform similarly for small test boxes (because these contain points near the origin where the twisted shifts do not play a significant role). As the testing area increases, we expect both algorithms to perform differently. The results, depicted in Fig.~\ref{fig:empzeros_stat}, confirm this. The computation ignoring the phase stabilization deviates from ensemble averages already for mid-range values of the testing area, while the twisted version closely tracks the benchmark given by Proposition \ref{lemma_fi}.
\begin{figure*}[tbp]
    \centering
    \begin{subfigure}[b]{0.47\textwidth}
    \includegraphics[width=0.98\textwidth]{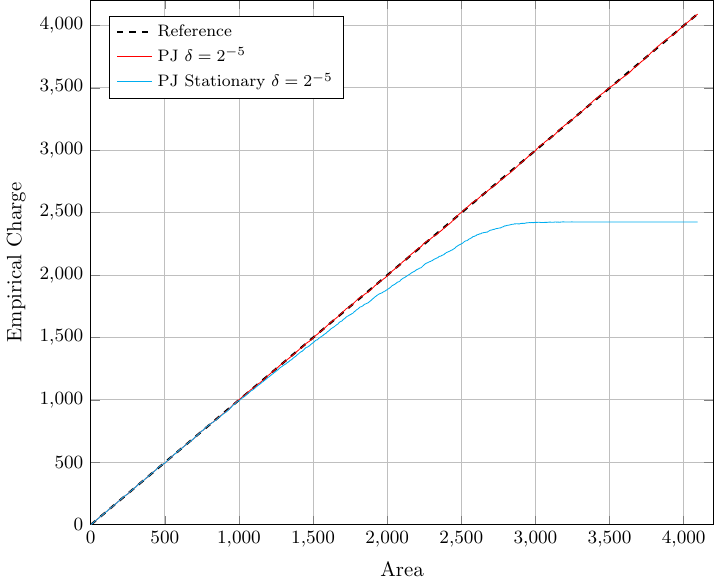}
    \end{subfigure}
    \hfill
    \begin{subfigure}[b]{0.47\textwidth}
    \centering
    \includegraphics[width=0.98\textwidth]{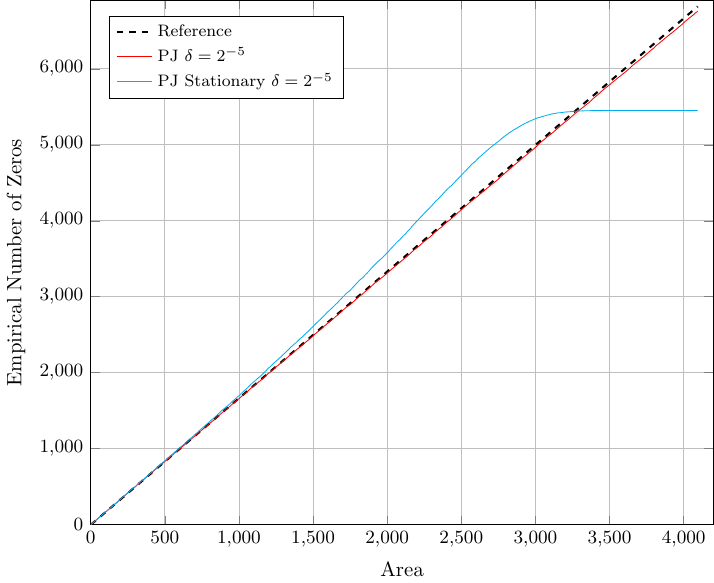}
    \end{subfigure}
    \caption{Empirical charge (left) and number of zeros (right) based on 5 simulations on $[-16,16]^2$ using the algorithms \talgo\, (PJ) and a Euclidean version (PJ Stationary) that does not use phase stabilization.}
    \label{fig:empzeros_stat}
\end{figure*}

\end{document}